\numberwithin{equation}{section}
\numberwithin{figure}{section}
\theoremstyle{plain}
\newtheorem{thm}{\protect\theoremname}[section]
\newtheorem{lem}[thm]{\protect\lemmaname}
\newtheorem{prop}[thm]{\protect\propositionname}
\newtheorem{example}[thm]{Example}
\newtheorem{rem}[thm]{Remark}
\newtheorem{defn}[thm]{Definition}
\newtheorem{cor}[thm]{Corollary}
\providecommand{\lemmaname}{Lemma}
\providecommand{\propositionname}{Proposition}
\providecommand{\theoremname}{Theorem}
\def\quot{/\!\!/}
\def\ker{\mathsf{Ker}}
\def\hom{\mathsf{Hom}}
\def\im{\mathsf{Im}}
\def\X{\mathfrak{X}}
\DeclareMathOperator{\GL}{\mathrm{GL}}
\DeclareMathOperator{\SL}{\mathrm{SL}}
\DeclareMathOperator{\U}{\mathrm{U}}
\def\C{\mathbb{C}}
\def\Z{\mathbb{Z}}
\def\N{\mathbb{N}}
\def\n{\mathbf{n}}
\def\id{\mathbf{1}}
\begin{document}

\title[Generalized Torus Knot Groups \& Character Varieties.]{Character Varieties of Generalized Torus Knot Groups.}

\author[C. Florentino]{Carlos Florentino}

\address{Dep. Matematica and CEMS.UL - Center for Mathematical Studies, Fac. Ci\^{e}ncias, Univ. Lisboa, Edf. C6, Campo Grande, 1749-016 Lisboa, Portugal.}

\email{caflorentino@fc.ul.pt}

\author[S. Lawton]{Sean Lawton}

\address{Department of Mathematical Sciences, George Mason University, 4400 
University Drive, Fairfax, Virginia 22030, USA}

\email{slawton3@gmu.edu}

\date{\today}

\keywords{Character varieties, generalized torus knot/link groups, flawed groups}

\subjclass[2020]{Primary 14M35, 20F38, 14P25; Secondary 14L30, 14D20, 22E46}



\begin{abstract}
Given $\mathbf{n}=(n_{1},\ldots,n_{r})\in\N^r$, let
$\Gamma_{\mathbf{n}}$ be a group presentable as $$\left\langle \gamma_{1},\ldots,\gamma_{r}\:|\:\gamma_{1}^{n_{1}}=\gamma_{2}^{n_{2}}=\cdots=\gamma_{r}^{n_{r}}\right\rangle. $$  If $\gcd(n_i,n_j)=1$ for all $i\not=j$, we say $\Gamma_{\mathbf{n}}$ is a {\it generalized torus knot group} and otherwise say it is a {\it generalized torus link group}. This definition includes torus knot and link groups ($r=2$), that is, fundamental groups of the complement of a torus knot or link in $S^{3}$.  Let $G$ be a connected complex reductive affine algebraic group.  We show that the $G$-character varieties of generalized torus knot groups are path-connected. We then count the number of irreducible components of the $\SL(2,\C)$-character varieties of $\Gamma_\n$ when $n_i$ is odd for all $i$.
\end{abstract}

\maketitle

\section{Introduction}
\subsection{Background}
Let $G$ be a connected complex reductive affine algebraic group (a reductive $\C$-group), and let it act by conjugation on the space of homomorphisms $\hom(\Gamma, G)$ where $\Gamma$ is a finitely presented group generated by $r$ elements.  We may consider $\hom(\Gamma, G)$ a subspace of $G^r$ by identifying homomorphisms with their evaluations at generators of $\Gamma$. Let $\hom^*(\Gamma, G)$ be the subspace of $\hom(\Gamma, G)$ consisting of homomorphisms with closed conjugation orbits. The quotient space by the conjugation action $\X_\Gamma(G):=\hom^*(\Gamma, G)/G$ is called the {\it $G$-character variety of $\Gamma$}.  The space $\X_\Gamma(G)$ is homeomorphic to the (affine) Geometric Invariant Theory (GIT) quotient $\hom(\Gamma, G)\quot G$ equipped with the analytic topology by \cite[Theorem 2.1]{FlLa4}, and so inherits a natural algebraic structure.  Consequently, it is Hausdorff, although highly singular (\cite{FlLa2,FLR,GLR}).  On the other hand, $\X_\Gamma(G)$ is homotopic to the non-Hausdorff conjugation quotient $\hom(\Gamma, G)/G$ by \cite[Proposition 3.4]{FLR}.  

The fundamental group of the complement in $S^3$ of a torus knot, a knot that lies on the surface of an embedded standard torus in $\mathbb{R}^3$,  is called a torus knot group.  Torus knot groups admit the presentation:
\[
\Gamma=\left\langle \gamma_{1},\gamma_{2}\:|\:\gamma_{1}^{n_{1}}=\gamma_{2}^{n_{2}}\right\rangle ,
\]
for some coprime integers $n_{1},n_{2}\in\mathbb{N}$. When $n_{1}$
and $n_{2}$ are not necessarily coprime, these will be torus link
groups, as these are fundamental groups of the complement of links lying on an embedded torus in $S^3$ (see \cite{Lickorish}, for example).

\subsection{Main results}

Fix $r\in\mathbb{N}$ and consider, more generally,
groups with the following presentation:
\[
\Gamma_{\mathbf{n}}:=\left\langle \gamma_{1},\ldots,\gamma_{r}\:|\:\gamma_{1}^{n_{1}}=\gamma_{2}^{n_{2}}=\cdots=\gamma_{r}^{n_{r}}\right\rangle ,
\]
for a fixed $\mathbf{n}=(n_{1},\ldots,n_{r})\in \N^r$.  We call such groups {\it generalized torus link groups}.  If $\gcd(n_i,n_j)=1$ for all $i\not=j$, then we say $\Gamma_{\mathbf{n}}$ is a {\it generalized torus knot group}.

Motivated by examples in \cite{munozsl2, munozsu2, ollersl2link},  in this paper we study the geometry and topology of character varieties of generalized torus knot groups.

Our main theorem, generalizing results in the above papers, is:

\begin{thm}
Let $\Gamma_\n$ be a generalized torus knot group with $r\geq 2$ and $G$ a reductive $\C$-group.  Then $\hom(\Gamma_\n,G)$ and $\X_{\Gamma_\n}(G)$ are path-connected.
\end{thm}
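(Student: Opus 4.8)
The plan is to prove path-connectedness of $\hom(\Gamma_\n,G)$; the statement for $\X_{\Gamma_\n}(G)$ then follows at once, since it is the continuous image of $\hom(\Gamma_\n,G)$ under the GIT quotient map (using the homeomorphism $\X_{\Gamma_\n}(G)\cong\hom(\Gamma_\n,G)\quot G$). Concretely, I would show that every $\rho\in\hom(\Gamma_\n,G)$ is joined by a path to the trivial representation, writing $A_i=\rho(\gamma_i)$ and $C=A_1^{n_1}=\cdots=A_r^{n_r}$ (so $C$ commutes with every $A_i$), and using the standard structure theory of connected reductive $\C$-groups: Jordan decomposition, the bijection $\exp$ between the nilpotent cone of $\mathfrak g$ and the unipotent variety of $G$, surjectivity of $\exp$ on complex tori, the fact that semisimple elements lie in maximal tori, and path-connectedness of connected algebraic groups. \textbf{Step 1 (remove unipotent parts).} Let $A_i=s_iu_i$ be the Jordan decomposition. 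Since $s_i^{n_i}$ is semisimple, $u_i^{n_i}$ unipotent, and they commute, uniqueness of Jordan decomposition forces $C_s=s_i^{n_i}$ and $C_u=u_i^{n_i}$ for \emph{every} $i$; writing $u_i=\exp(X_i)$, $C_u=\exp(X_C)$ with $X_i,X_C$ nilpotent, injectivity of $\exp$ on nilpotents gives $X_i=X_C/n_i$. Then $A_i(t):=s_i\exp((1-t)X_i)$ is a path in $\hom(\Gamma_\n,G)$ (one checks $A_i(t)^{n_i}=C_s\exp((1-t)X_C)$, independent of $i$) from $\rho$ to the semisimple representation $\rho_1=(s_1,\ldots,s_r)$, with common value $C':=C_s$ semisimple.

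\textbf{Step 2 (simultaneous diagonalization).} Each $s_i$ lies in a maximal torus $T_i$ of $G$, and $C'=s_i^{n_i}\in T_i$ is abelian, so $T_i\subseteq Z_G(C')$ and hence $T_i\subseteq H:=Z_G(C')^{\circ}$; thus all $s_i$, and $C'$, lie in the connected reductive group $H$, and each $T_i$ is a maximal torus of $H$. Fixing a maximal torus $T$ of $H$, choose $h_i\in H$ with $h_iT_ih_i^{-1}=T$ and a path in $H$ from $e$ to $h_i$; conjugating one coordinate at a time by this path yields a path in $\hom(\Gamma_\n,G)$ — valid because conjugation by $H\subseteq Z_G(C')$ preserves the relation $s_i^{n_i}=C'$ — reaching $\rho_2=(t_1,\ldots,t_r)$ with all $t_i\in T$ (hence $C'\in T$) and $t_i^{n_i}=C'$.

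\textbf{Step 3 (torus computation, where coprimality enters).} Now $\rho_2$ lies in $R:=\{(g_1,\ldots,g_r)\in T^r:g_1^{n_1}=\cdots=g_r^{n_r}\}$, the kernel of the homomorphism of tori $\psi\colon T^r\to T^{r-1}$, $(g_i)\mapsto(g_i^{n_i}g_{i+1}^{-n_{i+1}})_{i=1}^{r-1}$. As a kernel of a torus homomorphism $R$ is diagonalizable, hence path-connected iff connected iff the cokernel of the induced map on cocharacter lattices $X_*(T)^r\to X_*(T)^{r-1}$ is torsion-free. That map is the block map associated to the $(r-1)\times r$ integer matrix $M$ whose $i$-th row has $n_i$ in column $i$, $-n_{i+1}$ in column $i+1$, and zeros elsewhere; its maximal minors are $\pm N_j$ with $N_j:=\prod_{i\neq j}n_i$, and $\gcd(N_1,\ldots,N_r)=1$ precisely because the $n_i$ are pairwise coprime (no prime divides every $N_j$). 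Hence $R$ is connected, therefore path-connected, and it contains the trivial representation; chaining Steps 1--3 joins $\rho$ to it, proving $\hom(\Gamma_\n,G)$ path-connected.

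I expect the technical heart to be Steps 1 and 2: in Step 1 one must observe that the unipotent parts of the $A_i^{n_i}$ are forced to be mutually proportional, so that a single deformation annihilates all of them while preserving the equalities of the $n_i$-th powers; in Step 2 one must handle the possibly disconnected centralizer $Z_G(C')$ with care to move all semisimple parts into one maximal torus without breaking the defining relations. Step 3 is routine lattice arithmetic, but it is exactly the place where the coprimality hypothesis — and thus the distinction between generalized torus knot and link groups — is genuinely used; for link groups the matrix $M$ can have maximal minors with a common factor and $R$ may be disconnected.
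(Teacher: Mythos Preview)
Your argument is correct and takes a genuinely different route from the paper's. The paper (Theorem~\ref{thm:path-conn}) works only on the polystable locus $\hom^{*}(\Gamma_\n,G)$: it first proves, via a detour through $\GL(m,\C)$ linear algebra (Proposition~\ref{prop:diagonalization} and Lemma~\ref{lem:semisimple-again}), that for polystable $\rho$ every $\rho(\gamma_i)$ is semisimple; then it invokes Richardson's theorem to conclude that the Zariski closure $H=\overline{\langle\rho(\Gamma)\rangle}$ is reductive, and conjugates inside $H_0$ into a maximal torus; finally it appeals to Theorem~\ref{abelian-thm} for connectedness of the abelian locus. Your Step~1 replaces the polystability/semisimplicity machinery entirely: the observation that uniqueness of Jordan decomposition forces all unipotent parts to be $\exp(X_C/n_i)$ for a \emph{common} nilpotent $X_C$ is exactly what is needed to deform them away coherently, and this lets you treat every representation, not just polystable ones. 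Your Step~2 substitutes the centralizer $Z_G(C')^\circ$ (reductive because $C'$ is semisimple) for Richardson's $\overline{\langle\rho(\Gamma)\rangle}$, which is simpler and avoids an external citation. Your Step~3 is essentially the same content as Lemma~\ref{abel-lemma}/Proposition~\ref{prop:abel-iff} (the matrix $M$ is $-A_\n^{T}$), just phrased in terms of cocharacter lattices rather than the abelianization. What your approach buys is a shorter, more self-contained proof that covers all of $\hom(\Gamma_\n,G)$ directly; what the paper's approach buys is the auxiliary structural information (the stratification in Section~\ref{sec:la}, the exact sequence of Theorem~\ref{flawed-thm}) obtained along the way.
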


To prove this theorem, we show that the abelian locus in $\X_{\Gamma_\n}(G)$ and $\hom(\Gamma_\n,G)$ is path-connected (Theorem \ref{abelian-thm}) and then we show that there exists a path from any representation of $\Gamma_\n$ to an abelian one.

To construct such a path we need a detailed analysis of the case $G=\GL(m,\C)$.  In this case, we prove there exists a natural stratification for all torus link groups and describe it geometrically in addition to giving an explicit linear algebraic condition for a tuple of matrices to be a representation of $\Gamma_\n$ (Propositions \ref{prop:diagonalization} and \ref{prop:solutions}).

More generally, we show with Theorem \ref{flawed-thm} that $\hom^*(\Gamma_\n,G)$ fits into a natural ``exact sequence" with $\hom^*(\Z,G)$ and $\hom^*(*_{i=1}^r\Z_{n_i},G)$. Therefore, every point in $\X_{\Gamma_\n}(G)$ is determined by the simpler data of $G$-representations of free products of cyclic groups.

Lastly, in the cases of $G=\GL(2,\C)$ and $G=\SL(2,\C)$, we describe the combinatorics of the locus of irreducible representations. These results generalize the component counting results in \cite{munozsl2, ollersl2link}.  Precisely, here is the main theorem of Section \ref{sec:irrcomp} (Theorem \ref{thm:components}): 

\begin{thm}
Suppose all $n_{i}$ in $\n$ are odd. Then, the number of irreducible components
of $\hom^{irr}(\Gamma_{\n},\SL(2,\C))$, and consequently of $\X^{irr}_{\Gamma_\n}(\SL(2,\C))$, is:
\[
r-2-\sum_{i=1}^{r}n_{i}+\frac{1}{2^{r-1}}\prod_{i=1}^{r}(n_{i}+1).
\]
\end{thm}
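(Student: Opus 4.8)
The plan is to pin down the irreducible representations explicitly and reduce the count to a count of trace data. For $\rho\in\hom^{irr}(\Gamma_\n,\SL(2,\C))$ write $A_i:=\rho(\gamma_i)$ and $Z:=A_1^{n_1}=\cdots=A_r^{n_r}$, the common value forced by the relations. Then $Z$ commutes with every $A_i$, hence with the whole image, so by Schur's lemma $Z=\lambda\,\id$, and $\lambda^2=\de Z=1$ forces $Z=\pm\id$. Since the regular function $(A_i)\mapsto\tr(A_1^{n_1})$ takes only the values $\pm2$ on the irreducible locus, $\hom^{irr}(\Gamma_\n,\SL(2,\C))=H_+\sqcup H_-$ with $H_\pm$ clopen, where $H_\pm$ is the locus $Z=\pm\id$. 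Because every $n_i$ is odd, $(A_1,\dots,A_r)\mapsto(-A_1,\dots,-A_r)$ is a well-defined involution of $\hom(\Gamma_\n,\SL(2,\C))$ (it sends $A_i^{n_i}=A_j^{n_j}$ to $-A_i^{n_i}=-A_j^{n_j}$), preserves irreducibility, and interchanges $H_+$ with $H_-$; thus $H_+\cong H_-$. Finally $Z=\id$ is equivalent to $A_i^{n_i}=\id$ for all $i$, so $H_+=\hom^{irr}(*_{i=1}^{r}\Z_{n_i},\SL(2,\C))$. It therefore suffices to count the irreducible components of $\hom^{irr}(*_{i=1}^{r}\Z_{n_i},\SL(2,\C))$ and double the answer.

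For that, I would stratify $\hom^{irr}(*_{i=1}^{r}\Z_{n_i},\SL(2,\C))$ by the trace vector $(\tr A_1,\dots,\tr A_r)$. Each $\tr A_i$ lies in $V_i:=\{\zeta+\zeta^{-1}:\zeta^{n_i}=1\}$, which has exactly $(n_i+1)/2$ elements because $n_i$ is odd: the value $2$, which forces $A_i=\id$ (a finite-order element of $\SL(2,\C)$ of trace $2$ is the identity), together with $(n_i-1)/2$ values different from $\pm2$, each determining a single regular semisimple conjugacy class $C_i$. As the trace map is regular and $\prod_iV_i$ is finite, the strata are clopen in $\hom^{irr}$. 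Fix a trace vector and set $S=\{i:\tr A_i\neq2\}$; deleting the coordinates forced to equal $\id$ identifies the stratum with the set of irreducible tuples in $\prod_{i\in S}C_i$. If $|S|\le1$ the generated subgroup is abelian and the stratum is empty; if $|S|\ge2$ it is a nonempty Zariski-open subset (choose $A_i\in C_i$, $A_j\in C_j$ for two indices $i\neq j$ in $S$ with no common eigenline) of the irreducible variety $\prod_{i\in S}C_i$, hence is itself irreducible. Consequently the irreducible components of $\hom^{irr}(*_{i=1}^{r}\Z_{n_i},\SL(2,\C))$ are precisely the strata with $|S|\ge2$, and their number is
\[
\prod_{i=1}^{r}\frac{n_i+1}{2}-1-\sum_{i=1}^{r}\frac{n_i-1}{2}=\frac{1}{2^{r}}\prod_{i=1}^{r}(n_i+1)+\frac{r-2-\sum_{i=1}^{r}n_i}{2}.
\]
Doubling gives the stated formula for $\hom^{irr}(\Gamma_\n,\SL(2,\C))$; and since $\PSL(2,\C)$ is connected and acts freely on the irreducible locus with geometric quotient $\X^{irr}_{\Gamma_\n}(\SL(2,\C))$, the same count holds for $\X^{irr}_{\Gamma_\n}(\SL(2,\C))$.

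The main obstacle I anticipate is justifying that each stratum with $|S|\ge2$ is nonempty and irreducible as a variety: one needs that $\prod_{i\in S}C_i$ is irreducible (each $C_i$ is a smooth affine quadric surface in $\SL(2,\C)$) and that the reducible locus inside it is a \emph{proper} closed subvariety. Closedness follows by writing it as the image, under the proper projection $\prod_{i\in S}C_i\times\mathbb{P}^1\to\prod_{i\in S}C_i$, of the incidence variety $\{((A_i),\ell):A_i\ell=\ell\text{ for all }i\in S\}$; properness of the containment is the general-position statement. It is the hypothesis that all $n_i$ are odd that forces $|V_i|=(n_i+1)/2$ (ruling out $\tr A_i=-2$), excludes $-\id$ among the $A_i$, and makes the sign-flip involution well defined.
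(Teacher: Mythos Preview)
Your argument is correct and reaches the same conclusion as the paper, but by a genuinely different and somewhat cleaner route. The paper parametrizes components by eigenvalue tuples $(\lambda_1,\ldots,\lambda_r)$ with $\lambda_i^{n_i}=\pm1$, observes that the tuples $(\lambda_1^{\sigma_1},\ldots,\lambda_r^{\sigma_r})$ for $\sigma\in C_2^r$ all label the same component, and then applies Burnside's lemma to count $C_2^r$-orbits on $X_+\sqcup X_-$ before subtracting the exceptional orbits. You instead parametrize directly by trace tuples, which are exactly the $C_2^r$-orbits of eigenvalue tuples, so Burnside's lemma never appears; and you handle the $\pm I$ split via the sign-flip involution $(A_i)\mapsto(-A_i)$ (available precisely because all $n_i$ are odd), which identifies $H_+\cong H_-$ and reduces everything to a single count on $\hom^{irr}(*_i\Z_{n_i},\SL(2,\C))$. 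The paper's approach has the advantage that its setup (via the map $\varphi$ of Lemma~\ref{lem:algmap}) is phrased in a way that extends more readily to the mixed-parity case discussed in the remark following the theorem, whereas your sign-flip trick breaks down as soon as some $n_i$ is even. Conversely, your argument is more elementary in the all-odd case and makes the role of the hypothesis transparent: oddness is used exactly twice, to exclude trace $-2$ from $V_i$ and to make the involution well defined.
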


\subsection{Motivation}
There are two main reasons for this study.  The first is the search for {\it flawed} groups (see Remark \ref{rem:flawed} for the definition).  It is conjectured that all generalized torus link groups are flawed \cite{FLflawed}.  This conjecture, philosophically, makes sense since generalized torus link groups have properties similar to both abelian and free groups which are both flawed \cite{FlLa1, FlLa4}.  On the other hand, the results in \cite{munozsl2, munozsu2, munozsl3} show the conjecture is true in some cases.  So to ascertain whether this conjecture is true in general, a systematic study of these groups and their character varieties is necessary, the present paper being the first such study.  The second motivation is to determine what properties from torus links and knots are important in the study of their representations and what properties are simply group theoretic.  For example, the results in Section \ref{sec:la} show that many of the technical facts about these moduli spaces are entirely group-theoretically determined and have little to do with that fact that the groups (when $r=2$) arise from link complements.

\subsection*{Acknowledgments}
We acknowledge support from U.S. National Science Foundation grants: DMS 1107452, 1107263, 1107367 ``RNMS: GEometric structures And Representation varieties" (the GEAR Network), and thank the Simons Center for Geometry and Physics, Stony Brook University for its hospitality where some the research for this paper was performed. Lawton was supported by a collaboration grant from the Simons Foundation (\#578286).  Florentino was supported by CMAFcIO (University of Lisbon), and the project UIDB/04561/2020, FCT Portugal.  Lastly, we thank an anonymous referee for helpful suggestions.

\section{Generalized Torus Link Groups and their Representations}

Let $\Gamma$ be a finitely generated group and $G$ a reductive $\C$-group. As $G$ admits a faithful representation, we will often refer to homomorphisms $\rho\in \hom(\Gamma, G)$ as $G$-representations or simply representations.  As in \cite{Sikora-CharVar}, we say that a given representation $\rho\in\hom(\Gamma,G)$ is \emph{polystable} if the conjugation orbit $G\cdot\rho$ is closed (either in the Zariski or in the analytic topology, see \cite[Page 5]{FlLa4}). An element $g\in G$ is {\it semisimple} if it is polystable when thought of as an element in $\hom(\Z,G)$.  An element $g\in G$ is {\it regular} if its centralizer $Z_G(g)$ has minimal dimension.  For example, semisimple elements in $\GL(n,\C)$ are diagonalizable, but regular semisimple elements have distinct eigenvalues.

We say that $\rho$ is {\it irreducible} if $\rho(\Gamma)$ is not contained in any proper parabolic subgroup in $G$.  If for every proper parabolic $P\subset G$ such that $\rho(\Gamma)\subset P$, there is a Levi subgroup $L\subset P$ such that $\rho(\Gamma)\subset L$ then $\rho$ is {\it completely-reducible}.  So irreducible implies completely-reducible.  Moreover, \cite[Theorem 30]{Sikora-CharVar} shows that polystability and complete-reducibility coincide. 

Fix a presentation of $\Gamma$ with $r$ generators $\{\gamma_1,...,\gamma_r\}$.  Then there is an injection $\hom(\Gamma,G)\hookrightarrow G^r$ by $\rho\mapsto (\rho(\gamma_1),...,\rho(\gamma_r))$ which induces a topology on $\hom(\Gamma,G)$ independent of the presentation of $\Gamma$ (up to homeomorphism).  Additionally, it gives $\hom(\Gamma,G)$ the structure of an affine algebraic set since $G$ is an affine variety.  Precisely, the relations of $\Gamma$ determine polynomials that cut out $\hom(\Gamma,G)$ from $G^r$. 

\subsection{Abelian Representations of Torus Knot Groups}

Let  $\n=(n_1,...,n_r)\in\N^r$. We now show that the abelianization of the group 
\[
\Gamma_{\mathbf{n}}=\left\langle \gamma_{1},...,\gamma_{r}\mid\gamma_{i}^{n_{i}}=\gamma_{j}^{n_{j}}\text{ for all } i,j\right\rangle 
\]
is free of rank one if and only if  $\Gamma_{\n}$ is a generalized torus knot group,
that is, $\gcd(n_i,n_j)=1$ for all $i\not=j$. 

To simplify notation, we write the abelianization of $\Gamma_{\n}$ additively as:
\[
\Gamma_{\mathbf{n}}^{\text{ab}}:= \Gamma_{\n}/[\Gamma_\n,\Gamma_\n] = \left\langle \gamma_{1},...,\gamma_{r}\mid n_{i}\gamma_{i}=n_{j}\gamma_{j}\text{ for all } i,j\right\rangle ,
\]
using the same generators $\gamma_i$. As such, $\Gamma_{\mathbf{n}}^{\text{ab}}$ is the cokernel of the linear map $A_\n$ in the following short exact sequence of abelian groups:
\[
0\to\mathbb{Z}^{r-1}\overset{A_\n}{\longrightarrow}\mathbb{Z}^{r}\to\Gamma_{\mathbf{n}}^{\text{ab}}\to0
\]
whose representation in a canonical basis is given by the $r\times(r-1)$ matrix: 
\[
A_\n=\left(\begin{array}{cccc}
-n_{1} & 0 & \cdots & 0\\
n_{2} & -n_{2} & \ddots & \vdots\\
0 & n_{3} & \ddots & 0\\
\vdots & \ddots & \ddots & -n_{r-1}\\
0 & \cdots & 0 & n_{r}
\end{array}\right).
\]

\begin{lem}
\label{abel-lemma}
Let $\Gamma_{\n}$ be a generalized torus knot group.  Then $\Gamma_{\mathbf{n}}^{\text{ab}}\cong \Z$.  Consequently, $\hom(\Gamma_\n^{ab},G)\cong G$.
\end{lem}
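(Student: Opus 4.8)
The plan is to identify $\Gamma_\n^{\mathrm{ab}}$ with $\mathrm{coker}(A_\n)$ via the displayed short exact sequence and to show this cokernel is torsion-free of rank one. The submatrix of $A_\n$ on rows $1,\dots,r-1$ is lower-bidiagonal with diagonal $-n_1,\dots,-n_{r-1}$, hence invertible over $\mathbb{Q}$; so $A_\n$ has rank $r-1$, is injective, and $\mathrm{coker}(A_\n)$ has free rank $r-(r-1)=1$. Thus $\Gamma_\n^{\mathrm{ab}}\cong\Z\oplus T$ for some finite abelian group $T$, and it remains to prove $T=0$.

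For this I would invoke the Smith normal form: for an integer matrix $A$ of rank $\rho$, the torsion subgroup of $\mathrm{coker}(A)$ vanishes if and only if the $\rho$-th determinantal divisor of $A$ — the gcd of all its $\rho\times\rho$ minors — equals $1$ (equivalently, all invariant factors equal $1$). So it suffices to show the gcd of the maximal $(r-1)\times(r-1)$ minors of $A_\n$ is $1$. Such minors come from deleting a single row. Deleting row $k$ leaves a block-triangular matrix whose blocks are: the lower-bidiagonal $(k-1)\times(k-1)$ block on rows and columns $1,\dots,k-1$ with diagonal $-n_1,\dots,-n_{k-1}$, and the upper-bidiagonal block on rows $k+1,\dots,r$ and columns $k,\dots,r-1$ with diagonal $n_{k+1},\dots,n_r$ (the entries linking the two blocks sit in the deleted row $k$, and $k=1$, $k=r$ degenerate to a single block). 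Hence the $k$-th maximal minor is $\pm\prod_{j\neq k}n_j$.

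Finally I would check that $\gcd\!\bigl(\prod_{j\neq 1}n_j,\dots,\prod_{j\neq r}n_j\bigr)=1$ under the hypothesis $\gcd(n_i,n_j)=1$ for all $i\neq j$: if a prime $p$ divided all of these products, then $p\mid\prod_{j\neq 1}n_j$ gives $p\mid n_i$ for some $i\geq 2$, while $p\mid\prod_{j\neq i}n_j$ gives $p\mid n_\ell$ for some $\ell\neq i$, whence $p\mid\gcd(n_i,n_\ell)$, a contradiction. Therefore $T=0$ and $\Gamma_\n^{\mathrm{ab}}\cong\Z$; the final assertion is then immediate, since evaluation at a generator of $\Z$ is a natural isomorphism $\hom(\Gamma_\n^{\mathrm{ab}},G)=\hom(\Z,G)\xrightarrow{\sim}G$. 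I expect the only genuine bookkeeping to be verifying the block-triangular structure after removing an interior row; the number-theoretic step and the rest are short. A more hands-on alternative would exhibit an explicit $\GL(r,\Z)\times\GL(r-1,\Z)$ change of basis carrying $A_\n$ to the standard inclusion $\Z^{r-1}\hookrightarrow\Z^r$, but the determinantal-divisor argument is cleaner.
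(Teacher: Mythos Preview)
Your proof is correct. The determinantal computation is sound: deleting row $k$ from $A_\n$ leaves a block-diagonal (not merely block-triangular) matrix with determinant $\pm\prod_{j\neq k}n_j$, and the $(r-1)$-th determinantal divisor is therefore $\gcd_k\prod_{j\neq k}n_j$, which equals $1$ precisely under the pairwise-coprime hypothesis. Your prime-divisor argument for this last step is fine and is in fact the content of the paper's Lemma~\ref{lem:genbez}.

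The paper's main proof takes a different route: it argues by induction on $r$, using B\'ezout's identity at each step to produce an \emph{explicit} generator of $\Gamma_\n^{\mathrm{ab}}$ (namely $x=\sum_i b_i\gamma_i$ where $\sum_i b_i\,N/n_i=1$) and to verify directly that it has infinite order. The paper does note that the Smith normal form gives a shorter proof, so your approach is one it explicitly anticipates but does not carry out. What the two approaches buy: yours is cleaner and more structural, avoiding the somewhat fiddly induction; the paper's is constructive and feeds directly into Remark~\ref{rem:const}, where the explicit generator is used to write each $\gamma_j$ as $(N/n_j)x$. If you wanted to recover that constructive information from your argument, you would need to actually exhibit the $\GL(r,\Z)\times\GL(r-1,\Z)$ change of basis you allude to at the end.
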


\begin{proof}
A short proof is to compute the Smith normal form of the matrix $A_\n$ above (see also Proposition \ref{prop:abel-iff} below). However, we can also argue inductively on $r$ which gives a constructive proof as noted in Remark \ref{rem:const}. Indeed, $r=1$ is trivial.  Let us see how $r=2$ plays out.  Since $\gcd(n_1,n_2)=1$, Bezout's identity implies there are integers $s,t$ with $sn_1+tn_2=1$ and thus, using $n_1\gamma_1=n_2\gamma_2$ we get $$sn_2\gamma_2=sn_1\gamma_1=\gamma_1(1-tn_2)=\gamma_1 - tn_2\gamma_1$$
Since $\Gamma_{\mathbf{n}}^{\text{ab}}$ is abelian, this implies $\gamma_1=n_2(s\gamma_2+t\gamma_1)$.  In the same way, we get $\gamma_2=n_1(t\gamma_1+s\gamma_2)$.  Thus, $\Gamma_{\mathbf{n}}^{\text{ab}}$ is generated by the single element $s\gamma_2+t\gamma_1$. To show that $\Gamma_{\mathbf{n}}^{\text{ab}}=\Z$ we argue as follows. If $N(t\gamma_1+s\gamma_2)=0$ for some $N>0$, then $tN\gamma_1=-sN\gamma_2$ which implies $$tNn_2\gamma_1=-n_2sN\gamma_2=-n_1sN\gamma_1$$ and so $0=N(tn_2+sn_1)\gamma_1=N\gamma_1$, which is a contradiction since $\gamma_1$ does not have finite order.

Observe that with $x:=t\gamma_1+s\gamma_2$ we have $n_1\gamma_1=n_1n_2x=n_2\gamma_2$. Now assume there is such an $x$ that generates $\Gamma_{\mathbf{n'}}^{\text{ab}}$ with infinite order for $\mathbf{n'}=(n_1,...,n_{r'})$ with $1\leq r'<r$.  Thus, by considering the subgroup generated by $\{\gamma_1,...,\gamma_{r-1}\}$, there exists $x\in \Gamma_{\mathbf{n}}^{\text{ab}}$ so that some multiple of $x$ equals $\gamma_i$ and $n_1\cdots n_{r-1}x=n_i\gamma_i=n_r\gamma_r$ for all $1\leq i\leq r-1$.  Thus, $\Gamma_{\mathbf{n}}^{\text{ab}}=\langle x,\gamma_r\rangle$.  Then $\gcd(n_1\cdots n_{r-1},n_r)=1$ which implies by the $r=2$ case there exist $s,t\in\Z$ so that $\Gamma_{\mathbf{n}}^{\text{ab}}=\langle tx+s\gamma_r\rangle\cong \Z$.
\end{proof}

\begin{rem}\label{rem:const}
The above proof suggests a general formula for the generator $x$ of the group $\Gamma_{\mathbf{n}}^{\text{ab}}$, for general $r\geq 2$, in terms of the
$\gamma_{1},...,\gamma_{r}$. Indeed, one starts by observing that the pairwise coprime condition, $\gcd(n_{i},n_{j})=1$ for all $i\neq j$, is equivalent to the condition:
\[
\mathrm{gcd}(N/n_1,\cdots,N/n_r)=1,
\] 
where $N=n_1 n_2\cdots n_r$ $($see the following Lemma $\ref{lem:genbez})$. In this case, Bezout's identity gives integers $b_{1},...,b_{r}$ so that $b_{1}\frac{N}{n_{1}}+\cdots+b_{r}\frac{N}{n_{r}}=1$. With these integers, define
$x:=b_{1}\gamma_{1}+\cdots+b_{r}\gamma_{r}.$
Indeed, using $n_i\gamma_i = n_r\gamma_r$ for all $i$, we see:
\begin{eqnarray*}
n_{1}n_{2}\cdots n_{r-1}x & = & n_{1}n_{2}\cdots n_{r-1}b_{1}\gamma_{1}+\cdots+n_{1}n_{2}\cdots n_{r-1}b_{r}\gamma_{r}=\\
 & = & n_{2}\cdots n_{r-1}n_{r}b_{r}\gamma_{r}+\cdots+n_{1}n_{2}\cdots n_{r-1}b_{r}\gamma_{r}=\\
 & = & (N/n_{1}\,b_{1}+\cdots+N/n_{r}\,b_{r})\gamma_{r}=\gamma_{r}.
\end{eqnarray*}
Similarly, we obtain $\gamma_{j}=\frac{N}{n_{j}}x$ for all $j=1,\ldots,r$.
\end{rem}

\begin{lem}\label{lem:genbez}
Let $N=n_{1}\cdots n_{r}$. Then $\gcd(n_{i},n_{j})=1$ for all $i\neq j$
if and only if 
\[
\mathrm{gcd}(N/n_1,\ldots,N/n_r)=1.
\]
\end{lem}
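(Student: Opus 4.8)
The plan is to reduce both implications to a single prime-by-prime statement. Recall that a tuple of positive integers $(n_1,\ldots,n_r)$ is pairwise coprime precisely when, for every prime $p$, at most one of the $n_k$ is divisible by $p$; and that a positive integer equals $1$ precisely when all of its $p$-adic valuations vanish. So I would fix a prime $p$, write $v_k:=v_p(n_k)$ for the exponent of $p$ in $n_k$, and translate everything into a statement about the nonnegative integers $v_1,\ldots,v_r$.

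The key computation is the elementary identity $v_p(N/n_k)=\big(\sum_{l=1}^r v_l\big)-v_k$, so that
\[
v_p\big(\gcd(N/n_1,\ldots,N/n_r)\big)\;=\;\min_{k}v_p(N/n_k)\;=\;\Big(\sum_{l=1}^r v_l\Big)-\max_{k}v_k.
\]
Ordering the $v_l$ decreasingly as $v_{(1)}\geq\cdots\geq v_{(r)}$, the right-hand side equals $v_{(2)}+\cdots+v_{(r)}$, which vanishes if and only if at most one $v_l$ is nonzero, i.e. at most one $n_k$ is divisible by $p$. Letting $p$ range over all primes: the quantity $v_p\big(\gcd(N/n_1,\ldots,N/n_r)\big)$ vanishes for every $p$ exactly when $\gcd(N/n_1,\ldots,N/n_r)=1$, while the condition ``at most one $n_k$ divisible by $p$'' holds for every $p$ exactly when the $n_k$ are pairwise coprime. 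Comparing the two gives the asserted equivalence.

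Alternatively, one can argue directly with prime divisors and avoid valuations: if a prime $p$ divides every $N/n_k$, then from $p\mid N/n_1=n_2\cdots n_r$ we get $p\mid n_j$ for some $j\neq 1$, and then from $p\mid N/n_j$ we get $p\mid n_k$ for some $k\neq j$, contradicting $\gcd(n_j,n_k)=1$; conversely, if a prime $p$ divides both $n_i$ and $n_j$ with $i\neq j$, then for each $k$ at least one of $i,j$ differs from $k$, so $p\mid N/n_k$, forcing $\gcd(N/n_1,\ldots,N/n_r)>1$. Either route is routine, so I do not expect a genuine obstacle here; the only points deserving a moment's care are the degenerate case $r=1$ (both sides hold vacuously, $N/n_1$ being the empty product $1$) and indices with $n_i=1$, which are harmless and consistent with both conditions.
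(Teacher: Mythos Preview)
Your proposal is correct. Your alternative argument is essentially the paper's own proof: the converse (if some $\gcd(n_i,n_j)>1$ then a common prime divides every $N/n_k$) is identical, and for the forward direction the paper phrases the same idea in terms of sets of primes $P_i$ of $n_i$, observing $\bigcap_i (P\setminus P_i)=\varnothing$, which is exactly your ``at most one $n_k$ is divisible by $p$'' condition. Your primary $p$-adic valuation argument is a slightly different and somewhat tidier packaging of the same prime-by-prime analysis: rather than merely testing whether a given prime $p$ divides $\gcd(N/n_1,\ldots,N/n_r)$, you compute its exact valuation as $\sum_l v_l-\max_l v_l$, which yields the equivalence in one line and handles both directions at once. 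The content is the same; what your formulation buys is uniformity (no separate forward/converse cases) at the cost of invoking valuations, while the paper's version stays with elementary divisibility throughout.
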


\begin{proof}
Assume $\gcd(n_{i},n_{j})=1$ for all $i\neq j$. If $P_{i}$ is the set
of primes appearing in the factorization of $n_{i}$, then $P_{i}\cap P_{j}=\varnothing$
for all $i\neq j$. Letting $P=\bigcup_{i=1}^{r}P_{i}$ be the set
of primes in the factorization of $N$, the previous condition means
that also $\bigcap_{i=1}^{r}(P\setminus P_{i})=P\setminus P=\varnothing$.
Since $P\setminus P_{i}$ are the primes in $N/n_{i}$, we get that
the unique common divisor of all $N/n_{i}$ is 1, as claimed. Conversely,
if without loss of generality $d:=\gcd(n_{1},n_{2})>1$, then $d|n_{1}$ and $d|n_{2}$. Hence, 
$d\big|\frac{N}{n_{k}}$ for all $k$ since either $n_1\big|\frac{N}{n_{k}}$ or $n_2\big|\frac{N}{n_{k}}$.
Hence $d>1$ is a common divisor of all $N/n_{i}$ as wanted. 
\end{proof}

We now prove, for generalized torus link groups that are not generalized knot groups, Lemma \ref{abel-lemma} fails.

\begin{prop}\label{prop:abel-iff}
Let $\Gamma_\n$ be a generalized torus link group. Then, the abelianization $\Gamma_{\mathbf{n}}^{\text{ab}}$ is a free group
of rank $1$ if and only if $\Gamma_{\mathbf{n}}$ is a generalized torus knot group.
\end{prop}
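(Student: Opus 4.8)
The plan is to compute the abelianization $\Gamma_{\mathbf{n}}^{\text{ab}}$ directly as the cokernel of the integer matrix $A_\n$ and read off when it is free of rank one. One direction is already done: if $\Gamma_\n$ is a generalized torus knot group, Lemma \ref{abel-lemma} gives $\Gamma_{\mathbf{n}}^{\text{ab}}\cong\Z$, which is free of rank one. So the real content is the converse (equivalently, the contrapositive): if $\gcd(n_i,n_j)=d>1$ for some $i\neq j$, then $\Gamma_{\mathbf{n}}^{\text{ab}}$ is \emph{not} free of rank one.

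First I would use the short exact sequence $0\to\Z^{r-1}\xrightarrow{A_\n}\Z^r\to\Gamma_{\mathbf{n}}^{\text{ab}}\to 0$ displayed just before Lemma \ref{abel-lemma}. Since the map $A_\n$ is injective (its columns are visibly linearly independent over $\Q$ because the diagonal entries $-n_1,\ldots,-n_{r-1}$ are nonzero, giving it full rank $r-1$), the Smith normal form of $A_\n$ has $r-1$ nonzero invariant factors $d_1\mid d_2\mid\cdots\mid d_{r-1}$, and hence
\[
\Gamma_{\mathbf{n}}^{\text{ab}}\;\cong\;\Z\;\oplus\;\Z/d_1\Z\;\oplus\;\cdots\;\oplus\;\Z/d_{r-1}\Z.
\]
Thus $\Gamma_{\mathbf{n}}^{\text{ab}}$ is free of rank one precisely when all invariant factors $d_k$ equal $1$, i.e. when the $(r-1)\times(r-1)$ minors of $A_\n$ have gcd equal to $1$ (so that $d_1\cdots d_{r-1}=\gcd\text{ of maximal minors}=1$, forcing each $d_k=1$). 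So the proposition reduces to the purely arithmetic claim: the gcd of the maximal minors of $A_\n$ equals $1$ if and only if $\gcd(n_i,n_j)=1$ for all $i\neq j$.

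To finish, I would compute the maximal minors of $A_\n$ explicitly. Deleting row $k$ from $A_\n$ leaves an $(r-1)\times(r-1)$ matrix that is block upper/lower triangular after splitting at row $k$, so its determinant is (up to sign) $n_1 n_2\cdots n_{k-1}\cdot n_{k+1}\cdots n_r = N/n_k$, where $N=n_1\cdots n_r$. (The minor obtained by deleting row $1$ is lower-triangular with diagonal $n_2,\ldots,n_r$; deleting row $r$ is upper-triangular with diagonal $-n_1,\ldots,-n_{r-1}$; the intermediate cases factor as a product of the two triangular blocks.) Hence the gcd of the maximal minors is exactly $\gcd(N/n_1,\ldots,N/n_r)$, and by Lemma \ref{lem:genbez} this equals $1$ if and only if $\gcd(n_i,n_j)=1$ for all $i\neq j$. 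Combining with the structure-theorem observation above completes the proof.

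The main obstacle I anticipate is the bookkeeping in the minor computation: one must check carefully that deleting an interior row $k$ really does produce a matrix whose determinant is $\pm N/n_k$, since the resulting matrix is not literally triangular but only block-triangular after a row/column partition, and the signs need to be tracked (though signs are irrelevant for the gcd). Everything else — injectivity of $A_\n$, the Smith normal form decomposition, and the appeal to Lemma \ref{lem:genbez} — is routine once that computation is in hand.
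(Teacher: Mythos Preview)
Your argument is correct and shares the paper's Smith-normal-form framework, but the arithmetic endgame is genuinely different. The paper argues the contrapositive by asserting that when (without loss of generality) $\gcd(n_1,n_2)>1$, the first invariant factor $a_1$ already exceeds $1$, by inspecting the first row and column of $A_\n$. You instead compute \emph{all} $(r-1)\times(r-1)$ minors, identify their gcd with $d_1\cdots d_{r-1}=\gcd(N/n_1,\ldots,N/n_r)$, and close with Lemma~\ref{lem:genbez}. Your route has two advantages. First, it makes transparent that Proposition~\ref{prop:abel-iff} and Lemma~\ref{lem:genbez} are essentially equivalent statements. Second, it is more robust: the first invariant factor of $A_\n$ is the gcd of \emph{all} entries, namely $\gcd(n_1,\ldots,n_r)$, and this can equal $1$ even when some pair fails to be coprime (e.g.\ $\mathbf{n}=(2,4,3)$, where $a_1=1$ but $a_2=2$), so the torsion may sit entirely in the higher invariant factors---a case your maximal-minor computation handles uniformly while the paper's one-line description of ``the first step'' of the Smith algorithm does not. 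Your anticipated obstacle is not serious: deleting an interior row $k$ leaves a genuinely block-diagonal matrix (columns $1,\ldots,k-1$ are supported on rows $1,\ldots,k-1$ and columns $k,\ldots,r-1$ on rows $k+1,\ldots,r$), with triangular blocks of determinants $(-1)^{k-1}n_1\cdots n_{k-1}$ and $n_{k+1}\cdots n_r$, so the minor is $\pm N/n_k$ as you claim.
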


\begin{proof}

Lemma \ref{abel-lemma} shows the backwards direction.  So we need only show that if there exists $i\not=j$ such that $\gcd(n_i,n_j)\not=1$, then $\Gamma_\n^{ab}\not\cong \Z$.

The rank of $A_\n$ is $r-1$, and so we have:
\[
\Gamma_{\mathbf{n}}^{\text{ab}}=\mathsf{Coker}A_\n\cong\mathbb{Z}\oplus F,\]
where $F$ is a finite abelian group of the form:
$F \cong \bigoplus_{j=1}^{r-1}\mathbb{Z}_{a_{i}} $ 
where $a_{1}\mid\cdots\mid a_{r-1}$ are the integers that appear in the Smith canonical factorization:
\[
B:=PA_\n Q=\left(\begin{array}{cccc}
a_{1} & 0 & \cdots & 0\\
0 & a_{2} & & \vdots\\
\vdots & \ddots & \ddots & 0\\
0 & & 0 & a_{r-1}\\
0 & \cdots & 0 & 0
\end{array}\right),
\]
where $P\in \GL(r,\mathbb{Z})$ and $Q\in \GL(r-1,\mathbb{Z})$ are invertible integer matrices.

Suppose, without loss of generality, that $n_{1}$ and $n_{2}$ are not coprime. Since the first step to realize the Smith normal form places in the $(1,1)$-position the greatest common denominator of the elements in the first row and column, then $a_{1}=\gcd(n_{1},n_{2})>1$.  Thus, $F$ is non-trivial, and hence $\Gamma_{\mathbf{n}}^{\text{ab}}$ is not free.
\end{proof}

When $r=2$, we have the following precise consequence.

\begin{cor}
$\Gamma_{(n,m)}^{ab}\cong \Z\oplus \Z_{\gcd(n,m)}$.
\end{cor}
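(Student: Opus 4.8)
The plan is to deduce the Corollary directly from the machinery of Proposition \ref{prop:abel-iff} specialized to $r=2$. For $r=2$ the presentation matrix $A_\n$ is the single column $\binom{-n}{m}$, an $2\times 1$ integer matrix, and $\Gamma_{(n,m)}^{\text{ab}}=\mathsf{Coker}\,A_\n=\Z^2/\langle(-n,m)\rangle$. So everything reduces to computing the Smith normal form of this column vector, which has rank one. The nonzero invariant factor is the gcd of its entries, namely $\gcd(n,m)$ (the sign being irrelevant). Hence $B:=PA_\n Q$ has a single nonzero entry $a_1=\gcd(n,m)$ in position $(1,1)$ and a zero in position $(2,1)$, so $\mathsf{Coker}\,B\cong \Z_{\gcd(n,m)}\oplus\Z$. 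Since $P\in\GL(2,\Z)$ and $Q\in\GL(1,\Z)$ are isomorphisms of the relevant lattices, $\mathsf{Coker}\,A_\n\cong\mathsf{Coker}\,B$, giving $\Gamma_{(n,m)}^{\text{ab}}\cong\Z\oplus\Z_{\gcd(n,m)}$.

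Alternatively — and this is essentially the same computation phrased elementarily — I would write $d=\gcd(n,m)$, $n=dn'$, $m=dm'$ with $\gcd(n',m')=1$, pick $s,t$ with $sn'+tm'=1$, and perform the explicit change of basis on $\Z^2$: the vector $(-n,m)=d(-n',m')$ extends, together with $(t,s)$, to a $\Z$-basis of $\Z^2$ because the matrix $\begin{pmatrix}-n'&t\\ m'&s\end{pmatrix}$ has determinant $-n's-tm'=-1$. In this new basis the relation reads $d\cdot e_1=0$ and there is no relation on $e_2$, so $\Gamma_{(n,m)}^{\text{ab}}\cong\Z_d\oplus\Z$, as claimed. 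This also matches Lemma \ref{abel-lemma}: when $d=1$ one recovers $\Gamma_{(n,m)}^{\text{ab}}\cong\Z$.

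There is no real obstacle here; the only point requiring a word of care is confirming that the single nonzero Smith invariant of a column vector is exactly the gcd of its entries (equivalently, that the first elementary divisor $a_1$ produced in Proposition \ref{prop:abel-iff} equals $\gcd(n,m)$ when there is nothing in rows beyond the second), which is immediate from the standard reduction algorithm already invoked in the proof of that proposition. So the proof is a one-line application of the preceding result, with the basis-change argument available as a self-contained alternative.
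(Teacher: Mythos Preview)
Your proposal is correct and follows exactly the route the paper intends: the corollary is stated as an immediate consequence of Proposition~\ref{prop:abel-iff} for $r=2$, and you have simply written out the Smith normal form computation for the $2\times 1$ matrix $A_{\mathbf{n}}=\binom{-n}{m}$ that the paper leaves implicit. The alternative explicit basis change you give is a nice self-contained version of the same calculation.
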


Recall $\hom^*(\Gamma,G)\subset \hom(\Gamma, G)$ is the subspace of polystable homomorphisms, and the $G$-character variety of $\Gamma$ is the conjugation quotient $\X_\Gamma(G)=\hom^*(\Gamma,G)/G$.

The epimorphism $\Gamma_\n\to \Gamma_\n^{ab}$ induces inclusions $\hom(\Gamma_\n^{ab}, G)\hookrightarrow\hom(\Gamma_\n,G)$ and $\X_{\Gamma_\n^{ab}}(G)\hookrightarrow\X_{\Gamma_\n}(G)$ for any reductive $\C$-group $G$.  The image of this inclusion is called the {\it abelian locus} of the character variety and denoted $\X_{\Gamma_\n}^{ab}(G)$.  It consists of representations whose evaluations at generators pair-wise commute with each other.

\begin{thm}\label{abelian-thm}
Let $G$ be a connected, reductive $\C$-group, $DG$ its derived subgroup, and $ZG$ its center, and $F=DG\cap ZG$. Let $\Gamma_\n$ be a generalized torus knot group.  Then the abelian locus of $\X_{\Gamma_\n}(G)$ is isomorphic to $\X_{\Z}(G)\cong G\quot G\cong\C^{\mathrm{Rank}(DG)}/H\times_{F} (\C^*)^{\dim Z}$ where $H\leq Z(\widetilde{DG})$.  Consequently, the abelian locus is irreducible and so connected. 
\end{thm}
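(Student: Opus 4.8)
The plan is to reduce the statement to the single computation of the conjugation quotient $G\quot G$, where the structure theory of reductive groups can be applied; connectedness then follows at once. Since $\Gamma_\n$ is a generalized torus knot group, Lemma \ref{abel-lemma} gives $\Gamma_\n^{ab}\cong\Z$, so $\hom(\Gamma_\n^{ab},G)\cong\hom(\Z,G)=G$ and $\X_{\Gamma_\n^{ab}}(G)\cong\X_\Z(G)=G\quot G$. The surjection $\Gamma_\n\twoheadrightarrow\Gamma_\n^{ab}$ exhibits $\hom(\Gamma_\n^{ab},G)$ as the closed, conjugation-stable subvariety of $\hom(\Gamma_\n,G)$ on which $\rho(\gamma_i)$ and $\rho(\gamma_j)$ commute for all $i,j$; since $G$ is reductive, taking $G$-invariants is exact, so the restriction of invariant rings is surjective and the induced morphism $\X_{\Gamma_\n^{ab}}(G)\to\X_{\Gamma_\n}(G)$ is a closed immersion. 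As the image of this morphism is by definition the abelian locus $\X_{\Gamma_\n}^{ab}(G)$, we get $\X_{\Gamma_\n}^{ab}(G)\cong\X_\Z(G)\cong G\quot G$, which is the first half of the statement.

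To identify $G\quot G$ with the asserted form I would apply the Chevalley restriction theorem for the conjugation action — restriction along a maximal torus $T\subseteq G$ yields $\C[G]^G\cong\C[T]^W$, hence $G\quot G\cong T/W$ for $W$ the Weyl group — together with the structure of $G$ as an almost-direct product $G=DG\cdot ZG^\circ$. Choose $T=T_{DG}\cdot ZG^\circ$ with $T_{DG}=T\cap DG$ a maximal torus of $DG$; then $W=W(DG)$ acts trivially on $ZG^\circ\cong(\C^*)^{\dim Z}$. Let $q\colon\widetilde{DG}\to DG$ be the simply connected cover with kernel $H\leq Z(\widetilde{DG})$ and put $\widetilde{T}_{DG}=q^{-1}(T_{DG})$; by Steinberg's theorem the invariant ring $\C[\widetilde{DG}]^{\widetilde{DG}}=\C[\widetilde{T}_{DG}]^W$ is polynomial on $\mathrm{Rank}(DG)$ generators, so $\widetilde{T}_{DG}/W\cong\C^{\mathrm{Rank}(DG)}$. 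Since $H$ is pointwise fixed by $W$, the translation action of $H$ commutes with $W$ and descends, giving $T_{DG}/W\cong\C^{\mathrm{Rank}(DG)}/H$. Reassembling $T$ from $T_{DG}$ and $ZG^\circ$ — which generate $T$ and meet in a finite central subgroup encoded by $F=DG\cap ZG$ via the presentation of $G$ as a quotient of $\widetilde{DG}\times ZG^\circ$ by a suitable finite central kernel — and then taking the $W$-quotient produces $G\quot G\cong\C^{\mathrm{Rank}(DG)}/H\times_F(\C^*)^{\dim Z}$. This last step is essentially the computation of $\X_\Z(G)=G\quot G$ already carried out in \cite{FlLa1,FlLa4}, which I would invoke rather than redo.

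Finally, $\C^{\mathrm{Rank}(DG)}$ is irreducible, and irreducibility is preserved under finite quotients, under the product with the irreducible torus $(\C^*)^{\dim Z}$, and under the remaining finite quotient implicit in $\times_F$; hence $G\quot G$, and therefore the abelian locus, is irreducible, in particular path-connected. (Even more directly, $G$ is connected, hence irreducible, so $G\quot G$ is irreducible as the affine GIT quotient of an irreducible variety, which by itself yields the ``consequently'' clause.) The step I expect to be genuinely technical is the bookkeeping in the middle paragraph: pinning down $H$ and $F$ and their actions so that $T/W$ takes exactly the stated shape. The subtlety is that $F=DG\cap ZG$ may be strictly larger than $DG\cap ZG^\circ$ — for instance $G=\mathrm{SO}_4$, where $ZG^\circ$ is trivial while $F\cong\Z_2$ — so $\times_F$ is not a naive balanced product and one must route the finite central data through the quotient presentation of $G$. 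Everything else (Lemma \ref{abel-lemma}, exactness of $G$-invariants, Chevalley restriction, and Steinberg's polynomiality theorem) is standard or already established.
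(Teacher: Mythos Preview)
Your proposal is correct and follows essentially the same line as the paper's proof: reduce to $G\quot G$ via Lemma~\ref{abel-lemma}, apply the central isogeny decomposition together with Steinberg's theorem to obtain the stated form, and deduce irreducibility directly from the connectedness of $G$. Your passage through Chevalley restriction and $T/W$ merely spells out in more detail what the paper compresses into a one-line citation of Steinberg, and you even flag a subtlety (the distinction between $ZG$ and $ZG^\circ$) that the paper's proof glosses over.
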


\begin{proof}
Lemma \ref{abel-lemma} implies directly that $\hom(\Gamma_\n^{ab},G)\cong G$ and $\X_{\Gamma_\n}^{ab}(G)\cong \X_{\Z}(G)\cong G\quot G$ for any reductive $\C$-group $G$.  Now if $G$ is connected, then the central isogeny theorem says $G\cong DG\times_F ZG$ where $F=DG\cap ZG$ is a finite central subgroup and $DG\times_F ZG$ is the quotient by $F$ acting on $DG\times ZG$ by $f\cdot (d,z)=(df^{-1},fz)$.  Thus, the GIT quotient of $G$ by $G$ acting by conjugation is $G\quot G=DG\quot DG\times_F ZG$.  By a result of Steinberg \cite{Steinberg}, $DG\quot DG$ is $\C^{\mathrm{Rank}(DG)}$ if $DG$ is simply-connected and it is $\C^{\mathrm{Rank}(DG)}/H$ for a finite group $H$ otherwise.  Concretely, $H$ is a subgroup of the center of the universal cover of $DG$.  Since $ZG$ is a central torus, it is a product of $\C^*$'s.  Lastly, since $G$ is connected it is irreducible and so $G\quot G$ is irreducible (and so connected) too.
\end{proof}

Let $\X_{\Gamma_\n}^{red}(G)$ denote the reducible locus in $\X_{\Gamma_\n}(G)$.

\begin{cor}
If $G$ is a non-abelian connected reductive $\C$-group that admits a faithful representation into $\GL(2,\C)$ or a quotient thereof, and $\Gamma_\n$ is a generalized torus knot group, then the reducible locus in $\X_{\Gamma_\n}(G)$ is irreducible $($and so connected$)$, and intersects every component of the irreducible locus of  $\X_{\Gamma_\n}(G)$.  In particular, $\X_{\Gamma_\n}^{red}(\GL(2,\C))\cong \C\times_{\Z_2} \C^*$,  $\X_{\Gamma_\n}^{red}(\SL(2,\C))\cong \C$, and  $\X_{\Gamma_\n}^{red}(\mathrm{PGL}(2,\C))\cong \C/\Z_2.$
\end{cor}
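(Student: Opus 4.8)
The plan is to first identify the reducible locus with the abelian locus, then invoke Theorem \ref{abelian-thm}, and finally use the path-connectedness of $\X_{\Gamma_\n}(G)$ for the intersection statement. So the first thing I would establish is that $\X^{red}_{\Gamma_\n}(G)=\X^{ab}_{\Gamma_\n}(G)$. The key point is that, since $G$ is connected, reductive, nonabelian, and admits a faithful representation into $\GL(2,\C)$ or a quotient thereof, its derived subgroup $DG$ has rank $1$; hence every proper parabolic subgroup of $G$ is a Borel subgroup, and its Levi factors are maximal tori, which are abelian. Thus if $\rho\in\hom^*(\Gamma_\n,G)$ is reducible then, being completely reducible by \cite[Theorem 30]{Sikora-CharVar}, $\rho(\Gamma_\n)$ is $G$-conjugate into a Levi factor of a proper parabolic containing it, hence into a maximal torus; so $\rho$ factors through $\Gamma_\n^{ab}$ and $[\rho]\in\X^{ab}_{\Gamma_\n}(G)$. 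Conversely, by Lemma \ref{abel-lemma} any class in $\X^{ab}_{\Gamma_\n}(G)\cong\X_\Z(G)$ is represented by a semisimple element of $G$, which lies in a maximal torus, hence in a proper parabolic (as $G$ is nonabelian), so it is reducible. This gives the equality.

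Granting this, Theorem \ref{abelian-thm} immediately yields that $\X^{red}_{\Gamma_\n}(G)=\X^{ab}_{\Gamma_\n}(G)\cong\X_\Z(G)\cong G\quot G$ is irreducible, hence connected. The three explicit descriptions then follow by specializing the formula of Theorem \ref{abelian-thm}, or directly from the characteristic-polynomial picture of $G\quot G$: for $G=\GL(2,\C)$ one has $DG=\SL(2,\C)$ simply connected of rank $1$ and $ZG\cong\C^*$ with $F=DG\cap ZG=\{\pm\id\}\cong\Z_2$, giving $\C\times_{\Z_2}\C^*$; for $G=\SL(2,\C)$ one has $DG=G$ and $ZG$ finite, so the answer is $\C$, the trace coordinate on $\SL(2,\C)\quot\SL(2,\C)$; for $G=\mathrm{PGL}(2,\C)$ one has $DG=G$ with universal cover $\SL(2,\C)$ whose center $\Z_2$ acts on the trace coordinate by $t\mapsto -t$, giving $\C/\Z_2$.

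Finally, to see that $\X^{red}_{\Gamma_\n}(G)$ meets the closure of every connected component of $\X^{irr}_{\Gamma_\n}(G)$, I would use that the irreducible locus is open and the reducible locus is closed in $\X_{\Gamma_\n}(G)$, together with the path-connectedness of $\X_{\Gamma_\n}(G)$ (our main theorem, applicable since $\Gamma_\n$ is a generalized torus knot group). Given such a component $C$, a class $[\rho]\in C$, and a path in $\X_{\Gamma_\n}(G)$ from $[\rho]$ to the trivial representation, let $t_0$ be the least parameter at which the path lands in $\X^{red}_{\Gamma_\n}(G)$ (the set of such parameters is closed and nonempty, so the infimum is attained); the arc over $[0,t_0)$ is then a connected subset of $\X^{irr}_{\Gamma_\n}(G)$ through $[\rho]$, hence lies in $C$, so $[\rho_{t_0}]\in\overline C\cap\X^{red}_{\Gamma_\n}(G)$. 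One can alternatively argue with the conjugacy-class stratification of $\hom(\Gamma_\n,G)$ (Propositions \ref{prop:diagonalization} and \ref{prop:solutions}), each stratum having irreducible GIT quotient containing both irreducible and reducible representations.

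I expect the main obstacle to be the first step: the equality $\X^{red}_{\Gamma_\n}(G)=\X^{ab}_{\Gamma_\n}(G)$ must hold uniformly for every admissible $G$ — all connected nonabelian reductive groups embedding into $\GL(2,\C)$ or a quotient thereof, not merely the three named examples — and it relies essentially on both the rank-$1$ structure of $DG$ and the equivalence of polystability with complete reducibility from \cite{Sikora-CharVar}. The remaining assertions are then either direct computations with $G\quot G$ or formal consequences of path-connectedness.
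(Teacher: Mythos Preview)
Your proposal is correct and follows essentially the same approach as the paper: identify the reducible locus with the abelian locus, invoke Theorem \ref{abelian-thm}, and then use Theorem \ref{thm:path-conn} for the intersection statement. In fact you supply more justification than the paper does for the equality $\X^{red}_{\Gamma_\n}(G)=\X^{ab}_{\Gamma_\n}(G)$ (the paper simply asserts it), and your infimum argument for the intersection is a mild repackaging of the paper's one-line appeal to the deformation constructed in the proof of Theorem \ref{thm:path-conn}.
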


\begin{proof}
In this case, the reducible locus and the abelian locus coincide.  Thus, the result follows from Theorem \ref{abelian-thm} and the observation that every irreducible representation deforms to a reducible one, which is proved independently of this corollary with Theorem \ref{thm:path-conn}.
\end{proof}

\subsection{A useful presentation for generalized torus link groups.}
Recall from the previous section that \[
\Gamma_{\mathbf{n}}=\left\langle \gamma_{1},\ldots,\gamma_{r}\ |\  \gamma_{1}^{n_{1}}=\gamma_{2}^{n_{2}}=\cdots=\gamma_{r}^{n_{r}}\right\rangle.
\]
Given $x,y$ in a group, denote their commutator by $[x,y]:=xyx^{-1}y^{-1}$.
In the same terms as $\Gamma_{\mathbf{n}}$, we define 
\[
\widehat{\Gamma}_{\mathbf{n}}:=\left\langle \gamma_{1},\ldots,\gamma_{r},t\:|\:\gamma_{i}^{n_{i}}=t,\ [\gamma_{i},t]=1,\ 1\leq i\leq r\right\rangle ,
\]
which we now show is just a different presentation of $\Gamma_{\n}$.

\begin{lem}
Let $\n\in\N^{r}$. Then there is a group isomorphism $\psi:\widehat{\Gamma}_{\n}\to\Gamma_{\n}$. 
\end{lem}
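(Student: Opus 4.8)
The plan is to exhibit explicit maps in both directions and check they are inverse homomorphisms. Define $\psi:\widehat{\Gamma}_{\n}\to\Gamma_{\n}$ on generators by $\psi(\gamma_i)=\gamma_i$ for $1\le i\le r$ and $\psi(t)=\gamma_1^{n_1}$. To see this is well defined I must verify it respects the defining relations of $\widehat{\Gamma}_{\n}$: the relation $\gamma_i^{n_i}=t$ maps to $\gamma_i^{n_i}=\gamma_1^{n_1}$, which holds in $\Gamma_\n$ since all the $\gamma_i^{n_i}$ are equal there; and the relation $[\gamma_i,t]=1$ maps to $[\gamma_i,\gamma_1^{n_1}]=1$, which holds because $\gamma_1^{n_1}=\gamma_i^{n_i}$ commutes with $\gamma_i$. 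In the other direction, define $\varphi:\Gamma_{\n}\to\widehat{\Gamma}_{\n}$ by $\varphi(\gamma_i)=\gamma_i$; this is well defined because the relations $\gamma_i^{n_i}=\gamma_j^{n_j}$ of $\Gamma_\n$ all follow from $\gamma_i^{n_i}=t=\gamma_j^{n_j}$ in $\widehat{\Gamma}_{\n}$.

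Then I would check $\psi\circ\varphi=\mathrm{id}_{\Gamma_\n}$ and $\varphi\circ\psi=\mathrm{id}_{\widehat{\Gamma}_{\n}}$ on generators. The first is immediate since both send $\gamma_i\mapsto\gamma_i$. For the second, $\varphi\circ\psi$ sends $\gamma_i\mapsto\gamma_i$ and sends $t\mapsto\varphi(\gamma_1^{n_1})=\gamma_1^{n_1}$, and in $\widehat{\Gamma}_{\n}$ we have $\gamma_1^{n_1}=t$ by the defining relation, so $\varphi\circ\psi$ is the identity on the generating set and hence on all of $\widehat{\Gamma}_{\n}$. This gives the desired isomorphism.

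There is no serious obstacle here; the only thing to be careful about is that a homomorphism out of a presented group is determined by images of generators but is only well defined once one checks the relations are satisfied, so the content is precisely the four short verifications above (two for well-definedness, two for the inverse property). The underlying idea is simply that $t$ in $\widehat{\Gamma}_{\n}$ is a redundant generator: it is forced to equal $\gamma_1^{n_1}$, and once it is eliminated the relations $[\gamma_i,t]=1$ become automatic consequences of $\gamma_i^{n_i}=\gamma_j^{n_j}$, so the two presentations describe the same group.
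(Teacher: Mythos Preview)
Your proof is correct and follows essentially the same approach as the paper: define $\psi$ on generators by $\psi(\gamma_i)=\gamma_i$, $\psi(t)=\gamma_1^{n_1}$, check it respects the relations, and exhibit the inverse by $\gamma_i\mapsto\gamma_i$. You spell out the relation checks and the verification that the two maps are mutual inverses more explicitly than the paper does, but the argument is the same.
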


\begin{proof}
 
To show the claim, we define $\psi(\gamma_{i})=\gamma_{i}$, for $1\leq i\leq r$
and $\psi(t)=\gamma_{1}^{n_{1}}$.  Note that $\psi$ indeed extends to
a group homomorphism since it preserves relations in the domain and codomain.  It is bijective since we can construct its inverse: $\psi^{-1}(\gamma_{i})=\gamma_{i}$.
\end{proof}

As a consequence, for any reductive $\C$-group $G$, we have isomorphisms $\hom(\Gamma_{\n},G)\cong\hom(\widehat{\Gamma}_{\n},G)$,
and $\X_{\Gamma_{\n}}(G)\cong\X_{\widehat{\Gamma}_{\n}}(G)$.

\subsection{An Exact Sequence with Torus Link Groups}

Let $\Z_{n_i}$ be the cyclic group of order $n_i$ and let $*\Z_{\n}:=*_{i=1}^r\Z_{n_i}$ be the free product of cyclic groups $\Z_{n_i}$, where $\n=(n_1,...,n_r)$.

\begin{lem}\label{exact-claim}
There is an exact sequence $0\to \Z \to \widehat{\Gamma}_\n\to *\Z_\n\to 0$.
\end{lem}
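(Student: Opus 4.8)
The plan is to write down the projection onto $*\Z_{\n}$ explicitly, identify its kernel with the cyclic subgroup generated by $t$, and then check that $t$ has infinite order. First I would define $\phi\colon\widehat{\Gamma}_{\n}\to *\Z_{\n}$ on the given generating set by $\phi(\gamma_{i})=\bar\gamma_{i}$, the distinguished generator of the $i$-th free factor $\Z_{n_{i}}$, and $\phi(t)=1$. This respects the defining relations of $\widehat{\Gamma}_{\n}$: the relation $\gamma_{i}^{n_{i}}=t$ becomes $\bar\gamma_{i}^{\,n_{i}}=1$, which holds in $\Z_{n_{i}}$, and $[\gamma_{i},t]=1$ becomes a trivial identity. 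Since the $\bar\gamma_{i}$ generate $*\Z_{\n}$, the map $\phi$ is surjective. Next I would observe that $t$ is central in $\widehat{\Gamma}_{\n}$: the relations $[\gamma_{i},t]=1$ say $t$ commutes with every $\gamma_{i}$, and the $\gamma_{i}$ (together with $t=\gamma_{1}^{n_{1}}$) generate the group; hence $\langle t\rangle$ is a normal subgroup and coincides with its own normal closure.

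To pin down $\ker\phi$ I would compare presentations. Adjoining the relator $t$ to the presentation of $\widehat{\Gamma}_{\n}$ turns each relation $\gamma_{i}^{n_{i}}=t$ into $\gamma_{i}^{n_{i}}=1$, makes each relation $[\gamma_{i},t]=1$ vacuous, and then permits eliminating the now-redundant generator $t$ by a Tietze transformation, leaving exactly the presentation $\langle\gamma_{1},\dots,\gamma_{r}\mid\gamma_{i}^{n_{i}}=1,\ 1\le i\le r\rangle$ of $*\Z_{\n}$. Thus $\widehat{\Gamma}_{\n}/\langle t\rangle\cong *\Z_{\n}$ compatibly with $\phi$ on generators, so $\ker\phi=\langle t\rangle$. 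Finally, to see $\langle t\rangle\cong\Z$ it suffices to exhibit a single homomorphism out of $\widehat{\Gamma}_{\n}$ that does not kill $t$: for any common multiple $N$ of $n_{1},\dots,n_{r}$ (for instance $N=n_{1}\cdots n_{r}$), the assignment $\gamma_{i}\mapsto N/n_{i}$, $t\mapsto N$ extends to a homomorphism $\widehat{\Gamma}_{\n}\to\Z$, since $n_{i}\cdot(N/n_{i})=N$ for all $i$ and $\Z$ is abelian; as $t\mapsto N\neq 0$, the element $t$ has infinite order. Assembling these steps yields the short exact sequence $1\to\Z\xrightarrow{t}\widehat{\Gamma}_{\n}\xrightarrow{\phi}*\Z_{\n}\to 1$ with $\Z$ central, which is the assertion.

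The only mildly delicate point is the Tietze computation identifying $\widehat{\Gamma}_{\n}/\langle t\rangle$, and even that is routine once one records that centrality of $t$ makes $\langle t\rangle$ normal (so $\langle t\rangle$ equals the normal closure $\langle\langle t\rangle\rangle$ used to form the quotient by a relator). As an alternative to the last step, one could instead deduce that $t$ has infinite order from the isomorphism $\widehat{\Gamma}_{\n}\cong\Gamma_{\n}$ of the preceding lemma, under which $t\mapsto\gamma_{1}^{n_{1}}$, together with Lemma~\ref{abel-lemma}: the image of $\gamma_{1}$, hence of $\gamma_{1}^{n_{1}}$, already has infinite order in the abelianization.
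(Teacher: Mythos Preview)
Your proof is correct and follows essentially the same line as the paper's: observe that $t$ is central (hence $\langle t\rangle$ is normal), identify the quotient $\widehat{\Gamma}_{\n}/\langle t\rangle$ with $*\Z_{\n}$ by reading off the presentation, and conclude $\langle t\rangle\cong\Z$. You are in fact more careful than the paper, which simply asserts $\langle t\rangle\cong\Z$ without argument, whereas you verify infinite order via an explicit homomorphism to $\Z$; your alternative appeal to Lemma~\ref{abel-lemma} is stated only for the knot case, but your primary argument covers all $\n$, so this is harmless.
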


\begin{proof}
  Since $t$ is central, it defines a normal subgroup isomorphic to $\Z$.  So we have an exact sequence $0\to \Z \to \widehat{\Gamma}_\n\to \widehat{\Gamma}_\n/\Z\to 0$.  However, setting $t$ to the identity demands that each generator $\overline{\gamma_i}\in \widehat{\Gamma}_\n/\Z$ has order $n_i$.  As there are clearly no other relations, we see $\widehat{\Gamma}_\n/\Z\cong *\Z_\n.$  That completes the proof.
\end{proof}

Before the next claim, we prove the following elementary lemma.

\begin{lem}\label{exact-lemma}
 Let $A,B,C,G$ be groups and suppose $1\to A\to B\to C\to 1$ is exact.  Then $\id\to \hom(C,G)\to \hom(B,G)\to \hom(A,G)$ is ``left exact".  $G$ acts on the hom-sets by conjugation, and the induced maps on the hom-sets are equivariant with respect to this action.  Moreover, if $A,B,C,G$ are topological groups and the exact sequence is given by continuous maps, then the induced maps on the hom-sets are continuous with respect to the compact-open topology on the hom-sets.
\end{lem}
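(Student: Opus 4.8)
The plan is to unwind all the definitions and verify functoriality of $\hom(-,G)$ by hand. Write $\iota\colon A\hookrightarrow B$ and $\pi\colon B\twoheadrightarrow C$ for the maps of the given short exact sequence, so that $\iota$ is injective, $\pi$ is surjective, and $\iota(A)=\ker\pi$. Precomposition then yields the two maps in the statement, $\pi^{*}\colon\hom(C,G)\to\hom(B,G)$, $\varphi\mapsto\varphi\circ\pi$, and $\iota^{*}\colon\hom(B,G)\to\hom(A,G)$, $\rho\mapsto\rho\circ\iota$, each landing in $\hom$ since a composite of homomorphisms is a homomorphism. Here ``left exact'' is to be read in the category of pointed sets, the distinguished point of every $\hom$-set being the trivial homomorphism; so the assertion amounts to saying that $\pi^{*}$ is injective and that $\im\pi^{*}$ equals the set of $\rho\in\hom(B,G)$ with $\iota^{*}(\rho)$ trivial.

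First I would check injectivity of $\pi^{*}$: if $\varphi\circ\pi=\varphi'\circ\pi$ then $\varphi=\varphi'$ because $\pi$ is onto. Next, exactness at $\hom(B,G)$. Since $\pi\circ\iota$ is the trivial homomorphism $A\to C$, we have $\iota^{*}\pi^{*}\varphi=\varphi\circ(\pi\circ\iota)=\mathbf{1}$ for every $\varphi$, so $\im\pi^{*}$ is contained in the fibre of $\iota^{*}$ over the trivial homomorphism. Conversely, if $\rho\in\hom(B,G)$ satisfies $\rho\circ\iota=\mathbf{1}$, then $\rho$ kills $\iota(A)$, which by exactness is exactly $\ker\pi$; hence the universal property of the quotient group $B/\ker\pi$, together with the identification $B/\ker\pi\cong C$ induced by $\pi$, produces a unique $\varphi\in\hom(C,G)$ with $\varphi\circ\pi=\rho$, i.e.\ $\rho=\pi^{*}(\varphi)$. (Note $\iota^{*}$ need not be surjective, which is why one obtains only left exactness.)

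For equivariance, recall that $G$ acts on each $\hom(H,G)$ by $(g\cdot\rho)(h)=g\,\rho(h)\,g^{-1}$. Then for $\varphi\in\hom(C,G)$, $g\in G$ and $b\in B$,
\[
\pi^{*}(g\cdot\varphi)(b)=(g\cdot\varphi)(\pi(b))=g\,\varphi(\pi(b))\,g^{-1}=g\,(\pi^{*}\varphi)(b)\,g^{-1}=\bigl(g\cdot\pi^{*}\varphi\bigr)(b),
\]
and the identical one-line computation handles $\iota^{*}$, so both induced maps are $G$-equivariant. For the topological statement I would invoke the standard fact that precomposition by a fixed continuous map is continuous for the compact-open topology: if $f\colon X\to Y$ is continuous then $f^{*}\colon C(Y,Z)\to C(X,Z)$ is continuous, because the preimage of a subbasic open set $W(K,U)=\{h:h(K)\subseteq U\}$ is $W(f(K),U)$, and $f(K)$ is compact whenever $K$ is. Applying this with $f=\pi$ and with $f=\iota$ (both continuous by hypothesis, and both restricting to maps between the subspaces of homomorphisms) gives continuity of $\pi^{*}$ and $\iota^{*}$.

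I do not anticipate a genuine obstacle here; the only point deserving care is the factoring step in the proof of exactness at $\hom(B,G)$, where one genuinely needs the full exactness hypothesis --- that $\iota$ identifies $A$ with the \emph{whole} kernel of $\pi$, not merely with some normal subgroup --- so that a homomorphism trivial on $\iota(A)$ descends uniquely along $\pi$.
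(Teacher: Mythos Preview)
Your proof is correct and follows essentially the same approach as the paper: both establish injectivity of $\pi^{*}$ from surjectivity of $\pi$, verify exactness at $\hom(B,G)$ via the factoring of a homomorphism killing $\ker\pi$ (you invoke the universal property of the quotient, the paper constructs the factored map by hand and checks well-definedness), and handle equivariance and continuity by the same direct computations with conjugation and compact-open subbasic sets.
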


\begin{proof}
Since we are not in an abelian category, this proof will spell out what we mean by ``left exact" in this context.  

First, let $\alpha$ be the homomorphism from $A\to B$, and $\beta$ the homomorphism from $B\to C$.  Then the maps $\beta^*:\hom(C,G)\to \hom(B,G)$ and $\alpha^*:\hom(B,G)\to \hom(A,G)$ are given by $f\mapsto f\circ \beta$ and $f\mapsto f\circ \alpha$ respectively.  

Now suppose $f\circ \beta=g\circ \beta$. Then, since $\beta$ is surjective it has a right inverse and so  $f=g$ and $\beta^*$ is injective.  We define $\ker(\alpha^*)$ to be the set of all homomorphism that are mapped to the trivial homomorphism by $\alpha^*$.  Since $\beta\circ \alpha$ is the trivial homomorphism, we see that $\im(\beta^*)\subset \ker(\alpha^*)$.

Next, suppose $\alpha^*(g)$ is the trivial homomorphism, that is, $g(\alpha(a)))=1$ for all $a\in A$.  Then we need to show that $g=h\circ \beta$ for some homomorphism $h:C\to G$.  Since $\beta$ is onto, for every $c\in C$ there is $b_c\in B$ so that $\beta(b_c)=c$; so define $h(c)=g(b_c)$.  By definition, $h\circ \beta =g$, so we need to show that $h$ is well-defined and a homomorphism. Suppose $\beta(b_c)=\beta(b_c')$, then $\beta(b_c^{-1}b_c')=1$ which implies by exactness that $b_c^{-1}b_c'=\alpha(a)$ for some $a\in A$.  Thus, $g(b_c^{-1}b_c')=g(\alpha(a))=1$ since $g\circ \alpha$ is the trivial homomorphism.  Thus, $g(b_c)=g(b_c')$ and we see $h$ is a well-defined set function.  Likewise, for $c_1,c_2\in C$ we have $h(c_1c_2)=g(b_{c_1c_2})$, $h(c_1)=g(b_{c_1}),$ and $h(c_2)=g(b_{c_2})$ where $\beta(b_{c_1c_2})=c_1c_2=\beta(b_{c_1})\beta(b_{c_2})=\beta(b_{c_1}b_{c_2}).$  And so, since $h$ is well-defined $h(c_1c_2)=g(b_{c_1c_2})=g(b_{c_1}b_{c_2})=g(b_{c_1})g(b_{c_2})=h(c_1)h(c_2),$ as required.

Notice that that $\alpha^*$ and $\beta^*$ are $G$-equivariant:  $$\alpha^*(g\cdot f)(a)=gf(\alpha(a))g^{-1}=g\cdot \alpha^*(f)(a),$$ and likewise for $\beta^*$.  Thus $G$-orbits map to $G$-orbits. 

Lastly, given a sub-basic set $U(K,O)=\{f\in \hom(A,G)\ |\ f(K)\subset O\}$ where $K$ is compact in $A$ and $O$ is open in $G$, we have that $(\alpha^*)^{-1}(U(K,O))=U(\alpha(K),O)\subset \hom(B,G)$ since $\alpha$ is continuous implies $\alpha(K)$ is compact in $B$.  Hence, $\alpha^*$ is continuous (likewise for $\beta^*$).
 
\end{proof}

Let $G_{ss}$ be the collection of semisimple elements in $G$, that is, the polystable locus of $\hom(\Z,G)$.

\begin{thm}\label{flawed-thm}Let $G$ be a reductive $\C$-group. There is an exact sequence $$\id\to\hom^*(*\Z_\n, G)\hookrightarrow \hom^*(\widehat{\Gamma}_\n,G)\twoheadrightarrow \hom^*(\Z,G)\cong G_{ss}\to \id.$$
\end{thm}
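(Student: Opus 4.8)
The plan is to deduce this from the elementary Lemma~\ref{exact-lemma}, applied to the short exact sequence $0\to\Z\xrightarrow{\iota}\widehat\Gamma_\n\xrightarrow{\pi}*\Z_\n\to0$ of Lemma~\ref{exact-claim} (where $\iota(1)=t$), and then to restrict everything to the polystable loci. Lemma~\ref{exact-lemma} immediately supplies a $G$-equivariant, continuous, ``left exact'' sequence $\id\to\hom(*\Z_\n,G)\xrightarrow{\pi^*}\hom(\widehat\Gamma_\n,G)\xrightarrow{\iota^*}\hom(\Z,G)$; under the identification $\hom(\Z,G)\cong G$ one has $\iota^*(\rho)=\rho(t)$, while $\pi^*$ is injective with image $\{\rho:\rho(t)=1\}$. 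So what remains is to check four things: (i) $\pi^*$ carries $\hom^*(*\Z_\n,G)$ into $\hom^*(\widehat\Gamma_\n,G)$; (ii) $\iota^*$ carries $\hom^*(\widehat\Gamma_\n,G)$ into $\hom^*(\Z,G)\cong G_{ss}$; (iii) within the polystable loci, $\im\pi^*=\{\rho:\iota^*(\rho)=1\}$; and (iv) $\iota^*$ restricted to $\hom^*(\widehat\Gamma_\n,G)$ surjects onto $G_{ss}$.

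I expect (ii) to be the one substantive step; it is the only place where centrality of $t$ and polystability are used together. The argument I would give: if $\rho$ is polystable, then (being completely reducible in the sense of \cite{Sikora-CharVar}, equivalently) its Zariski closure $M:=\overline{\rho(\widehat\Gamma_\n)}$ is a reductive group — alternatively, fixing a faithful $G\hookrightarrow\GL(m,\C)$ makes $\C^m$ a faithful semisimple $M$-module, forcing $R_u(M)=1$. Since $t$ is central in $\widehat\Gamma_\n$, the element $\rho(t)$ is central in $\rho(\widehat\Gamma_\n)$ and hence in $M$. Write its Jordan decomposition $\rho(t)=su$; then $u\in\overline{\langle\rho(t)\rangle}\subseteq M$, and $u$ commutes with everything that commutes with $\rho(t)$, in particular with all of $M$. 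Thus $\overline{\langle u\rangle}$ is a connected normal unipotent subgroup of the reductive group $M$, hence trivial, so $\rho(t)=s$ is semisimple and $\iota^*(\rho)\in G_{ss}=\hom^*(\Z,G)$.

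Items (i) and (iii) I would dispatch as bookkeeping. For any $\phi\in\hom(*\Z_\n,G)$, the representation $\pi^*\phi$ has the \emph{same image subgroup} of $G$ as $\phi$ (because $\pi$ is surjective and $(\pi^*\phi)(t)=1$), and complete reducibility of a representation — equivalently, polystability — is a property of the image subgroup; hence $\phi$ is polystable iff $\pi^*\phi$ is, which gives (i). For (iii), Lemma~\ref{exact-lemma} already gives $\{\rho\in\hom(\widehat\Gamma_\n,G):\rho(t)=1\}=\im\pi^*$ at the level of all representations; intersecting with the polystable loci and invoking the same ``image only'' observation identifies $\{\rho\in\hom^*(\widehat\Gamma_\n,G):\rho(t)=1\}$ with $\pi^*\bigl(\hom^*(*\Z_\n,G)\bigr)$, which together with injectivity of $\pi^*$ is the desired exactness in the middle.

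Finally, for (iv): given $g\in G_{ss}$, since $G$ is connected reductive, $g$ lies in some maximal torus $T$, and $T$ is divisible, so for each $i$ I may choose $y_i\in T$ with $y_i^{n_i}=g$. Then $\gamma_i\mapsto y_i$, $t\mapsto g$ respects all the relations of $\widehat\Gamma_\n$ (the $y_i$ and $g$ all lie in the abelian group $T$, and $y_i^{n_i}=g$), defining $\rho\in\hom(\widehat\Gamma_\n,G)$ whose Zariski-closed image lies in $T$ and is therefore diagonalizable, hence reductive; so $\rho$ is polystable (by the criterion used in (ii)) and $\iota^*(\rho)=g$. This proves surjectivity onto $G_{ss}$. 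Combining (i)--(iv) with the equivariance and continuity inherited from Lemma~\ref{exact-lemma} yields the stated exact sequence; the only genuinely non-routine point is (ii), everything else resting on Lemmas~\ref{exact-claim} and~\ref{exact-lemma}.
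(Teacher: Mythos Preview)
Your proof is correct and follows the same overall architecture as the paper---reduce to Lemmas~\ref{exact-claim} and~\ref{exact-lemma}, then verify the four polystable compatibilities---but your treatment of the one substantive step, namely (ii), is genuinely different. The paper establishes that $\rho(t)$ is semisimple by a forward reference: it first proves the $\GL(m,\C)$ case via the explicit linear-algebraic description of polystable representations in Proposition~\ref{prop:solutions} (Section~\ref{sec:la}), and then reduces the general $G$ to this case through a faithful embedding $G\hookrightarrow\GL(m,\C)$, invoking the fact that semisimplicity is detected by any faithful linear representation. Your argument instead works intrinsically in $G$: polystability forces the Zariski closure $M=\overline{\rho(\widehat\Gamma_\n)}$ to be reductive (this is Richardson's criterion, Proposition~\ref{prop:Rich1} in the paper), centrality of $t$ makes $\rho(t)$ central in $M$, and then the unipotent part of its Jordan decomposition generates a connected central (hence normal) unipotent subgroup of $M$, which must be trivial. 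This is cleaner and self-contained---it avoids the forward reference to Section~\ref{sec:la}---though the paper's route has the advantage of tying the result directly to the concrete block-diagonal description it develops anyway. Your handling of (i) and (iii) via ``polystability is a property of the image subgroup'' is also a slightly different (and perfectly valid) justification from the paper's ``closed $G$-equivariant inclusion preserves closed orbits''; both are correct, and yours is arguably more transparent. Step (iv) is essentially identical to the paper's.
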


\begin{proof}
Recall $\widehat{\Gamma}_{\mathbf{n}}=\left\langle \gamma_{1},\ldots,\gamma_{r},t\:|\:\gamma_{i}^{n_{i}}=t,\ [\gamma_{i},t]=1,\ 1\leq i\leq r\right\rangle.$ 

The exact sequence from Lemma \ref{exact-claim} gives a left exact sequence of hom-spaces by Lemma \ref{exact-lemma}:
 $$\id\to\hom(*\Z_\n, G)\hookrightarrow \hom(\widehat{\Gamma}_\n,G)\to \hom(\Z,G)\cong G.$$ 
 
Since $\hom(*\Z_\n, G)\hookrightarrow \hom(\widehat{\Gamma}_\n,G)$ is a continuous $G$-equivariant inclusion, we see that closed $G$-orbits in $\hom(*\Z_\n, G)$ map to closed $G$-orbits in $\hom(\widehat{\Gamma}_\n,G)$. 

So, with respect to the restrictions of the induced maps, we have $\id\to \hom^*(*\Z_\n, G)\hookrightarrow \hom^*(\widehat{\Gamma}_\n,G)\to  \hom(\Z,G)$ is left exact (note middle exactness follows again since $$\hom(*\Z_\n, G)\hookrightarrow \hom(\widehat{\Gamma}_\n,G)$$ is a continuous $G$-equivariant inclusion).

Every element $g\in G_{ss}$ is in a maximal torus.  Thus, it is closed under $n$-th roots.  Let $g_i$ be a $n_i$-th root of $g$ (necessarily semisimple).  Then this defines a homomorphism $h\in \hom(\widehat{\Gamma}_{\mathbf{n}}, G)$ by setting $h(\gamma_i):=g_i$ for all $i$ that maps to $g$ by restriction to $t\in\widehat{\Gamma}_{\mathbf{n}}$. Since the components of $h$ are all semisimple and commute (as they are in the same maximal torus), the conjugation orbit of $h$ is closed. 

On the other hand, any homomorphism in $\hom^*(\widehat{\Gamma}_\n,\GL(m,\C))$ has its value at $t$ semisimple by Proposition \ref{prop:solutions}.  Therefore, any homomorphism in $\hom^*(\widehat{\Gamma}_\n,G)$ also has its value at $t$ semisimple since $G$ admits a faithful linear representation into $\GL(m,\C)$ for some $m$, and semisimplicity of an element of $G$ is detected by any faithful linear representations of $G$ by \cite{borel}.

Thus, after restricting to polystable points the left-exact sequence becomes fully exact:
$$\id\to\hom^*(*\Z_\n, G)\hookrightarrow \hom^*(\widehat{\Gamma}_\n,G)\twoheadrightarrow \hom^*(\Z,G)\cong G_{ss}\to \id.$$
 \end{proof}

\begin{rem}\label{rem:flawed}

Let $\Gamma$ be a finitely generated group and let $G$ be a reductive $\C$-group.  Let $K$ be a maximal compact subgroup of $G$.  Then we say $\Gamma$ is {\it $G$-flawed} if there exists a strong deformation retraction from the character variety $\X_\Gamma(G)$ to $\X_\Gamma(K)$.  We say $\Gamma$ is {\it flawed} if $\Gamma$ is $G$-flawed for all $G$.

In \cite{FLflawed}, it was observed that torus knot groups are $\SL(2,\C)$-flawed from results in \cite{munozsl2, munozsu2}.  Motivated by this it was conjectured in \cite{FLflawed} that {\it generalized} torus link groups are flawed.  In \cite{sl3torus} some further evidence for this conjecture is given by showing {\it some} torus knots groups are $\SL(3,\C)$-flawed.

Theorem \ref{flawed-thm} says, loosely speaking, that $G$-character varieties of generalized torus link groups are built from $G$-character varieties of free products of finitely many cyclic groups.  Since any group $\Gamma$ that is isomorphic to a free product of finitely many cyclic groups is flawed by \cite{FLflawed}, one might
expect that generalized torus link groups are flawed too.  One possible strategy to establish this using Theorem \ref{flawed-thm} would be to argue that the exact sequence is a $($stratified$)$ fibration and then argue that since the base and fibers deformation retract as required by \cite{FLflawed}, the total spaces do as well.
\end{rem}

\subsection{Path-connectedness of $\mathfrak{X}_{\Gamma_{\mathbf{n}}}(G)$}

In this subsection, we prove that both the representation varieties
$\hom(\Gamma_{\mathbf{n}},G)$ and the character varieties $\mathfrak{X}_{\Gamma_{\mathbf{n}}}(G)$
are path-connected (generally, with many irreducible components),
for every generalized torus {\it knot} group $\Gamma_{\mathbf{n}}$ and
reductive $\mathbb{C}$-group $G$. 

We start with some generic observations. Fix $r\in\mathbb{N}$, and
let $F_{r}$ denote the free (non-abelian) group of rank $r$. Write
$\mathbf{x}=(x_{1},...,x_{r})\in G^{r}$, and denote by $\langle\mathbf{x}\rangle=\langle x_{1},\ldots,x_{r}\rangle$
the Zariski closure of the subgroup of $G$ generated by the elements $x_{1},\ldots,x_{r}\in G$.

\begin{prop}
\label{prop:Rich1} The subgroup $H:=\langle\mathbf{x}\rangle\subset G$
is a reductive $\mathbb{C}$-group if and only if $\mathbf{x}=(x_{1},\ldots,x_{r})$
is a polystable element in $\hom(F_{r},G)\cong G^{r}$.
\end{prop}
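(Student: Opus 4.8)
The plan is to recall the standard characterization of polystable (closed-orbit) tuples in $G^r$ via the Zariski closure of the generated subgroup, and to reduce the ``if and only if'' to a known dichotomy. Recall that a point $\mathbf{x} \in \hom(F_r, G) \cong G^r$ is polystable precisely when its $G$-conjugation orbit is Zariski closed. Write $H = \langle \mathbf{x} \rangle$ for the Zariski closure of the subgroup generated by the $x_i$. The key observation is that $H$ is a closed subgroup of $G$, and the conjugation action of $G$ on $\mathbf{x}$ factors through how $G$ moves $H$ around; in particular the orbit $G \cdot \mathbf{x}$ is closed if and only if the ``adjoint-type'' orbit of $H$ is closed, and this in turn is governed by whether $H$ is reductive. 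This is the algebraic-group analogue of the classical fact (due to Richardson, hence the label) that a subgroup of a reductive group has a closed conjugation orbit iff it is reductive — indeed the name of the proposition signals that this is Richardson's theorem applied to finitely generated subgroups.

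First I would establish the forward direction: if $\mathbf{x}$ is polystable, then $G \cdot \mathbf{x}$ is closed, so by \cite[Theorem 30]{Sikora-CharVar} (cited in the excerpt) the representation $\mathbf{x}: F_r \to G$ is completely reducible, meaning that for every parabolic $P \supseteq H$ there is a Levi $L \subseteq P$ with $H \subseteq L$. A subgroup of $G$ with this ``strong complete reducibility'' property (in the sense of Serre/Bate–Martin–Röhrle) is $G$-completely reducible, and $G$-completely reducible subgroups of a reductive group are reductive — concretely, one shows the unipotent radical $R_u(H)$ must be trivial: if it were not, $R_u(H)$ would lie in the unipotent radical of some proper parabolic $P$ of $G$ containing $H$, contradicting that $H$ is contained in a Levi of $P$. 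Hence $H$ is reductive.

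Conversely, suppose $H = \langle \mathbf{x} \rangle$ is reductive. Then $H$ is not contained in the unipotent radical of any parabolic; more precisely, reductivity of $H$ forces $H$ to be $G$-completely reducible by the general theory (a subgroup whose identity component is reductive and which is not ``ideally'' destabilized is cr), and $G$-complete reducibility of $H$ is exactly the statement that the representation $F_r \to G$ with image generating $H$ is completely reducible, hence polystable by \cite[Theorem 30]{Sikora-CharVar} again, hence $G\cdot \mathbf{x}$ is closed. Alternatively, and perhaps more self-containedly, one can invoke the Hilbert–Mumford criterion: if $G \cdot \mathbf{x}$ were not closed, there would be a one-parameter subgroup $\lambda$ of $G$ with $\lim_{s\to 0}\lambda(s)\,\mathbf{x}\,\lambda(s)^{-1}$ existing but lying in a strictly smaller orbit; the destabilizing $\lambda$ determines a parabolic $P_\lambda$ with $H \subseteq P_\lambda$ and, because the limit is not in the orbit, $H \not\subseteq L_\lambda$ for the associated Levi; but then $H$ meets $R_u(P_\lambda)$ nontrivially in a normal way, producing a nontrivial normal unipotent subgroup of $H$ and contradicting reductivity.

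The main obstacle is the correct bookkeeping connecting three a priori distinct notions: (i) closedness of the $G$-orbit of $\mathbf{x}$, (ii) $G$-complete reducibility of $H = \langle \mathbf{x}\rangle$ in the Serre/Bate–Martin–Röhrle sense, and (iii) abstract reductivity of the algebraic group $H$. The equivalence (i)$\Leftrightarrow$(ii) is essentially \cite[Theorem 30]{Sikora-CharVar} together with the fact that conjugation-orbit-closedness only depends on the Zariski closure of the image; the subtle implication is (ii)$\Rightarrow$(iii) and its converse, where one must be careful that $H$ need not be connected, so ``reductive'' should be read as: $R_u(H^\circ) = 1$ (equivalently $H^\circ$ is reductive), and one uses that $G$-cr subgroups have this property while conversely a subgroup with reductive identity component sits inside no parabolic ``badly.'' I would handle the non-connected case by passing to $H^\circ$, noting that $H/H^\circ$ is finite and that finiteness does not affect which parabolics contain $H$ versus contain $H$ inside a Levi. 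Once this dictionary is set up cleanly, both directions are short.
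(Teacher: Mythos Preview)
Your proposal is essentially correct, but it takes a genuinely different route from the paper. The paper's ``proof'' is a one-line citation: this is Richardson's Theorem 3.6 in \cite{Rich}, where a tuple $\mathbf{x}$ is \emph{defined} to be semisimple exactly when $\langle\mathbf{x}\rangle$ is reductive, and Richardson proves semisimplicity is equivalent to orbit-closedness. So the paper treats the proposition as a black box.

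What you do instead is reconstruct the content of Richardson's theorem via the Serre/Bate--Martin--R\"ohrle notion of $G$-complete reducibility, using the equivalence polystable $\Leftrightarrow$ completely reducible from \cite[Theorem 30]{Sikora-CharVar} (already invoked in the paper) as a bridge. Your main line --- (i) closed orbit $\Leftrightarrow$ (ii) $G$-cr of $H$ $\Leftrightarrow$ (iii) $H$ reductive --- is sound over $\mathbb{C}$: the implication (ii)$\Rightarrow$(iii) via Borel--Tits (a nontrivial $R_u(H)$ forces $H$ into a parabolic $P$ with $R_u(H)\subset R_u(P)$, blocking any Levi containment) is clean, and (iii)$\Rightarrow$(ii) holds in characteristic zero by the vanishing of $H^1(H,R_u(P))$ for reductive $H$ (equivalently, Mostow's conjugacy of maximal reductive subgroups). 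Your care about $H$ possibly being disconnected is appropriate and handled correctly.

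One small imprecision: in your alternative Hilbert--Mumford argument you assert that $H\not\subset L_\lambda$ forces $H$ to meet $R_u(P_\lambda)$ nontrivially. That intersection is indeed normal in $H$ and unipotent, but it can be trivial even when $H\not\subset L_\lambda$; what you actually need is that reductivity of $H$ gives $H\cap R_u(P_\lambda)=1$, and then (in characteristic zero) $H$ is $R_u(P_\lambda)$-conjugate into $L_\lambda$, so the limit lies in the orbit after all --- contradiction. This is a minor reorganization, not a real gap. Your approach buys an actual argument where the paper offers only a reference; the paper's approach buys brevity and avoids the characteristic-zero-specific step (iii)$\Rightarrow$(ii), since Richardson's original proof works more generally.
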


\begin{proof}
This is a main result in Richardson \cite[Thm 3.6]{Rich}. Observe that the tuple
$\mathbf{x}$ is there defined to be \emph{semisimple} if $H=\langle\mathbf{x}\rangle$
is a reductive $\mathbb{C}$-group.
\end{proof}

For a finitely generated group $\Gamma$, the choice of generators
$\gamma_{1},\ldots,\gamma_{r}$ of $\Gamma$ produces an inclusion
of varieties
\[
\hom(\Gamma,G)\subset\hom(F_{r},G),
\]
given by evaluation on the generators.  Precisely, the image of a representation
$\rho:\Gamma\to G$, is identified with the $r$-tuple $\mathbf{x}=(x_{1},...,x_{r})\in G^{r}\cong\hom(F_{r},G)$
with $x_{i}=\rho(\gamma_{i})$.

\begin{lem}
\label{lem:closed} Let $\rho\in\hom(\Gamma,G)$ be polystable. Then
$\mathbf{x}=(x_{1},...,x_{r})=(\rho(\gamma_{1}),...,\rho(\gamma_{r}))$
is polystable in $\hom(F_{r},G)\cong G^{r}$.
\end{lem}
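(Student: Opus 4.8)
The plan is to reduce this to Richardson's criterion (Proposition \ref{prop:Rich1}), which characterizes polystability in $\hom(F_r,G)\cong G^r$ in terms of the Zariski closure $\langle \mathbf{x}\rangle$ being a reductive $\C$-group. Since $\rho\in\hom(\Gamma,G)$ is polystable, the orbit $G\cdot\rho$ is closed in $\hom(\Gamma,G)$, and by \cite[Theorem 30]{Sikora-CharVar} this is equivalent to $\rho$ being completely reducible: for every proper parabolic $P\subset G$ with $\rho(\Gamma)\subset P$, there is a Levi $L\subset P$ with $\rho(\Gamma)\subset L$. The key observation is that $\rho(\Gamma)$ and $\mathbf{x}=(x_1,\ldots,x_r)=(\rho(\gamma_1),\ldots,\rho(\gamma_r))$ generate the \emph{same} subgroup of $G$ (the $\gamma_i$ generate $\Gamma$), hence $\langle\mathbf{x}\rangle$ equals the Zariski closure of $\rho(\Gamma)$, and a fortiori $\mathbf{x}$ lies in a proper parabolic, respectively a Levi, if and only if $\rho(\Gamma)$ does. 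Therefore complete reducibility of $\rho$ as a representation of $\Gamma$ is equivalent to complete reducibility of $\mathbf{x}$ as a representation of $F_r$.

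First I would spell out that $\langle\mathbf{x}\rangle = \overline{\rho(\Gamma)}$ (Zariski closure), using that $\{\gamma_1,\ldots,\gamma_r\}$ generates $\Gamma$. Next I would invoke \cite[Theorem 30]{Sikora-CharVar} in the form: $\rho$ is polystable in $\hom(\Gamma,G)$ iff $\rho$ is completely reducible, and the latter is an intrinsic property of the abstract subgroup $\rho(\Gamma)\subset G$ (equivalently of its Zariski closure), not of the ambient group $\Gamma$. Then, since $\mathbf{x}=(x_1,\ldots,x_r)$ defines a homomorphism $F_r\to G$ with image $\rho(\Gamma)$, the same criterion applied to $F_r$ shows $\mathbf{x}$ is completely reducible, hence polystable in $\hom(F_r,G)$. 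Alternatively, and perhaps more cleanly, one can bypass the reducibility reformulation: complete reducibility of a subgroup $H\leq G$ in the sense of Serre is exactly the condition that appears in Richardson's theorem via $\overline{H}$ being reductive when $H$ is generated by finitely many elements, so one can directly cite Proposition \ref{prop:Rich1} once one knows $\langle\mathbf{x}\rangle = \overline{\rho(\Gamma)}$ is reductive — and $\overline{\rho(\Gamma)}$ being reductive is precisely what polystability of $\rho$ gives via the same Sikora/Richardson machinery.

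I expect the main (minor) obstacle to be the bookkeeping of which notion of polystability/complete reducibility is being used and ensuring the equivalences quoted from \cite{Sikora-CharVar} and \cite{Rich} apply verbatim in the reductive $\C$-group setting — in particular that "closed orbit in $\hom(\Gamma,G)$" and "closed orbit in $\hom(F_r,G)$" are both governed by the \emph{same} Zariski closure $\overline{\rho(\Gamma)}=\langle\mathbf{x}\rangle$ being reductive. Once that identification is made, the proof is essentially one line: $\rho$ polystable $\Rightarrow$ $\overline{\rho(\Gamma)}=\langle\mathbf{x}\rangle$ is a reductive $\C$-group $\Rightarrow$ $\mathbf{x}$ is polystable in $\hom(F_r,G)$ by Proposition \ref{prop:Rich1}. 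No delicate estimates or constructions are needed; the content is entirely in correctly invoking Richardson's criterion and the observation that generators of $\Gamma$ and the tuple $\mathbf{x}$ span the same (Zariski-closed) subgroup.
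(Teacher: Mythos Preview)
Your argument is correct, but it takes a different route from the paper's proof. The paper argues directly and topologically: the inclusion $\hom(\Gamma,G)\hookrightarrow\hom(F_r,G)\cong G^r$ is a closed embedding (since $\hom(\Gamma,G)$ is cut out of $G^r$ by the relations of $\Gamma$), and under this embedding the orbit $G\cdot\rho$ is literally the same subset as $G\cdot\mathbf{x}$; hence if $G\cdot\rho$ is closed in $\hom(\Gamma,G)$, it is closed in $G^r$. That is the entire proof.

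Your approach instead passes through the algebraic characterization: polystable $\Leftrightarrow$ completely reducible (Sikora), and complete reducibility depends only on the image subgroup $\rho(\Gamma)\subset G$, which coincides with the image of $\mathbf{x}:F_r\to G$; then apply Sikora (or Richardson via Proposition~\ref{prop:Rich1}) in the other direction. This is perfectly valid and has the virtue of making transparent \emph{why} the result holds structurally --- the obstruction to polystability lives in the image, not in the domain group. The cost is that you invoke the Sikora/Richardson equivalences twice, whereas the paper's argument needs nothing beyond the definition of polystability as ``closed orbit'' and the elementary fact that $\hom(\Gamma,G)$ sits as a closed $G$-invariant subvariety of $G^r$. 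Both proofs are essentially one line once set up; the paper's is just more self-contained.
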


\begin{proof}
This is clear, since $\hom(\Gamma,G)\subset\hom(F_{r},G)$ is a closed
topological embedding, so that closedness of orbits $G\cdot\mathbf{x}\cong G\cdot\rho$
is preserved.
\end{proof}

We now consider generalized torus link groups $\Gamma_\n$. The proof of the
following lemma is deferred to Section \ref{sec:la} (Lemma \ref{lem:semisimple-again}). 

\begin{lem}
\label{lem:semisimple} Let $\Gamma_{\mathbf{n}}=\left\langle \gamma_{1},...,\gamma_{r}\,|\,\gamma_{i}^{n_{i}}=\gamma_{j}^{n_{j}}\right\rangle $
be a generalized torus \emph{link} group and let $\rho\in\hom(\Gamma,G)$
be polystable. Then all $x_{i}:=\rho(\gamma_{i})\in G$ are semisimple
elements.
\end{lem}

\begin{thm}
\label{thm:path-conn}
Let $\Gamma_{\mathbf{n}}=\left\langle \gamma_{1},...,\gamma_{r}\,|\,\gamma_{i}^{n_{i}}=\gamma_{j}^{n_{j}}\right\rangle $
be a generalized torus \emph{knot} group. The space $\hom^{*}(\Gamma_\n,G)$
is path-connected. 
\end{thm}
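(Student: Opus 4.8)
The plan is to connect any polystable representation $\rho\in\hom^*(\Gamma_\n,G)$ to an abelian one by an explicit path, and then invoke Theorem \ref{abelian-thm} (irreducibility, hence path-connectedness of the abelian locus) to finish. Since $\Gamma_\n\cong\widehat{\Gamma}_\n$, I will work with the presentation $\widehat{\Gamma}_\n$, so that a representation is a tuple $(x_1,\ldots,x_r,u)$ with $u=\rho(t)$ central in the image, $x_i^{n_i}=u$, and $[x_i,u]=1$. By Lemma \ref{lem:semisimple} each $x_i$ is semisimple, and by Theorem \ref{flawed-thm} (or directly) $u=\rho(t)$ is semisimple as well.

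The key steps, in order: First, since $u$ is semisimple it lies in some maximal torus $T$ of $G$; since each $x_i$ commutes with $u$, each $x_i$ lies in the centralizer $Z_G(u)$, which is a connected reductive subgroup containing $T$. Second, I would deform $u$ inside $T$ along a path $u_s$, $s\in[0,1]$, to a fixed regular element $u_1$ lying in a maximal torus of the derived group times the center suitably, or — more robustly — deform $u$ to the identity: the torus $T\cong(\C^*)^k$ is path-connected, so choose a path $u_s$ in $T$ from $u_0=u$ to $u_1=\mathbf{1}$. Simultaneously I must drag the $x_i$ along so that the relation $x_i(s)^{n_i}=u_s$ is maintained; this is where I use that a maximal torus is divisible and that $n_i$-th roots can be chosen to vary continuously once we pass to a one-parameter family — concretely, writing $u_s=\exp(s\,\xi)\cdot u$ for an appropriate co-character direction and adjusting, or better, first use the path-connectedness of $T$ and the surjectivity of the $n_i$-power map on $T$ to lift the path $u_s$ to a path of $n_i$-th roots $\zeta_i(s)$ and set $x_i(s)=x_i\cdot\zeta_i(s)\zeta_i(0)^{-1}$, noting $x_i\zeta_i(0)^{-1}$ still has $n_i$-th power equal to... here one must be slightly careful, so instead I would deform each $x_i$ within the torus $\langle x_i\rangle^{\circ}\cdot(\text{something})$ — the cleanest route is: since $x_i$ is semisimple and $x_i^{n_i}=u$, conjugate so all $x_i$ and $u$ lie in $T$; then $\hom(\Gamma_\n,T)$ is literally the abelian-representation locus restricted to $T$, and within $T$ we may scale: the subvariety $\{(t_1,\ldots,t_r)\in T^r : t_i^{n_i}=t_j^{n_j}\}$ is a union of cosets of the connected subgroup $\{(s^{N/n_1},\ldots,s^{N/n_r}) : s\in T\}$ (using Lemma \ref{lem:genbez} and Remark \ref{rem:const}), which is path-connected; hence $\rho$ is connected within $\hom(\Gamma_\n,T)$ to the trivial representation, and this whole path stays in the abelian — hence polystable, since $T$ is a torus — locus. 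Third, having reached the trivial representation (a single point) which lies in the abelian locus, and knowing by Theorem \ref{abelian-thm} that the abelian locus of $\hom(\Gamma_\n,G)$ (equivalently $\hom(\Gamma_\n^{ab},G)\cong G$) is connected since $G$ is connected, I conclude $\hom^*(\Gamma_\n,G)$ is path-connected: any two polystable points are each connected to the abelian locus, which is itself connected.

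The main obstacle is Step two: ensuring the deforming path $x_i(s)$ stays polystable (closed orbit) throughout, not merely that it exists as a continuous family of homomorphisms. The safe fix is precisely to first use conjugation (allowed, since we work in $\hom^*$ up to nothing — but we want a path in $\hom^*$ itself, and conjugation orbits of polystable points are closed, hence the point can be moved along its orbit, which is path-connected as $G$ is connected) to place the entire image of $\rho$ inside a single maximal torus $T$ — this is possible because all $x_i$ are semisimple and pairwise... wait, they need not pairwise commute; they only each commute with $u$. So in general the image is not abelian, and this is the real subtlety. The correct resolution: after conjugating so $u\in T$, the $x_i$ lie in $Z_G(u)$; deform $u$ to a regular element $u'\in T$ of $Z G\cdot$(regular part), or all the way toward making $Z_G(u_s)$ grow — deform $u_s\to\mathbf 1$ so that $Z_G(u_s)$ increases to $G$, and correspondingly the $x_i(s)$ need only satisfy $x_i(s)^{n_i}=u_s$ with $[x_i(s),u_s]=1$, which is automatic as $u_s\to\mathbf1$ becomes central; once $u_s$ is regular in no torus but rather central, the constraint $x_i^{n_i}=u_s$ with $u_s$ central is a non-trivial condition, but we may take $x_i(s)$ to be a continuously-varying $n_i$-th root of $u_s$ inside the maximal torus containing $x_i$ and $u$, which exists by connectedness of that torus and surjectivity of the power map. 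The endpoint is then a tuple $(y_1,\ldots,y_r,\mathbf 1)$ with $y_i^{n_i}=\mathbf1$, i.e.\ a representation of $*\Z_\n$ into a maximal torus — in particular abelian — and I finish by connecting within the (connected, by Theorem \ref{abelian-thm}) abelian locus. I expect that making the $n_i$-th root selections vary continuously and globally — essentially a covering-space/monodromy argument on the torus, handled by Lemma \ref{lem:genbez} and the explicit Bezout formula of Remark \ref{rem:const} — is the technical heart, and that the paper's deferred linear-algebra analysis (Propositions \ref{prop:diagonalization}, \ref{prop:solutions}) for $\GL(m,\C)$ is what licenses these manipulations in the general reductive case via a faithful representation.
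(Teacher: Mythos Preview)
Your high-level strategy---deform an arbitrary polystable $\rho$ into the abelian locus, then invoke Theorem~\ref{abelian-thm}---is exactly the paper's, and you correctly appeal to Lemma~\ref{lem:semisimple} for semisimplicity of the $x_i$. But the deformation you propose does not work, and the gap is precisely the one you flag mid-argument (``wait, they need not pairwise commute'') and then do not actually close.

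You propose to deform $u$ along a path $u_s\to\mathbf{1}$ in a fixed torus $T\ni u$, and to let each $x_i(s)$ be a continuous $n_i$-th root of $u_s$ in ``the maximal torus $T_i$ containing $x_i$ and $u$.'' Two problems. First, $u_s$ is a path in $T$, not in $T_i$, so in general $u_s\notin T_i$ and there is no $n_i$-th root of $u_s$ available in $T_i$. Second, and more seriously: even if this were repaired, your endpoint $(y_1,\ldots,y_r)$ has $y_i\in T_i$ for \emph{different} tori $T_i$, so the assertion ``into a maximal torus---in particular abelian'' is false. Concretely, if $u=\mathbf{1}$ from the start (e.g.\ any irreducible representation of $*\Z_\n$), your path $u_s$ is constant, the $n_i$-th roots of $\mathbf{1}$ in $T_i$ form a finite set, so $x_i(s)\equiv x_i$, and you have made no progress whatsoever toward abelianizing.

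The idea you are missing, and which the paper uses, is to conjugate rather than to scale, and to do so inside the right subgroup. Let $H=\overline{\langle x_1,\ldots,x_r\rangle}$ be the Zariski closure; since $\rho$ is polystable, $H$ is reductive (Proposition~\ref{prop:Rich1} and Lemma~\ref{lem:closed}), and $g=x_i^{n_i}$ lies in the \emph{center} $ZH$, not merely in the centralizer of the image. Now fix one maximal torus $T_H\subset H$, and for each $i$ choose a path $h_i(t)\in H_0$ from $e$ to an element conjugating $x_i$ into $T_H$ (possible since $x_i$ is semisimple and $H_0$ is connected). Setting $x_i(t)=h_i(t)\,x_i\,h_i(t)^{-1}$, the relation $x_i(t)^{n_i}=h_i(t)\,g\,h_i(t)^{-1}=g$ is preserved precisely because $g\in ZH$ and $h_i(t)\in H$. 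At $t=1$ all $x_i(1)$ lie in the common torus $T_H$, hence commute, and you are in the abelian locus. The essential trick is: move the $x_i$'s by \emph{independent conjugations inside $H$}, keeping $g$ fixed; centrality of $g$ in $H$ is exactly what makes the relations survive under these separate conjugations.
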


\begin{proof}
We recall that $G$ is a reductive $\C$-group and that $ZH$ denotes the center of a subgroup $H\subset G$.

Let $\rho\in\hom^*(\Gamma_\n,G)$ be polystable, and let $g=x_{1}^{n_{1}}=\cdots=x_{r}^{n_{r}}$.
Then, all $x_{i}$ are semisimple by Lemma \ref{lem:semisimple}, and so is
$g$. Now, $g$ commutes with all $x_{i}$ since $gx_i=x_i^{n_i+1}=x_ig$, and hence $g$ commutes with the group generated by $\{x_1,...,x_r\}$. Thus, by \cite[Section 3]{Rich}, $g$ commutes with the Zariski closure of the group generated by $\{x_1,...,x_r\}$ which we denote by $H:=\langle\mathbf{x}\rangle$. We conclude $g\in H\cap Z_G(H)=ZH.$

By Lemma \ref{lem:closed}
and Proposition \ref{prop:Rich1}, $H$ is a reductive $\mathbb{C}$-group,
so it has a maximal torus $T_{H}$. In fact, $T_{H}$
is a maximal torus of $H_{0}$, the identity component of $H$. Since each $x_i$ is semisimple, each $x_i$ is contained in a maximal torus.  But since all maximal tori are conjugate, each $x_i$ is conjugate to an element in $T_H$.

Now, since $H_{0}$ is connected, for every $i$ there exists a path
$h_{i}(t)\in H_{0}$ such that $h_{i}(0)=e$ and $h_{i}(1)$ satisfies:
\[
h_{i}(1)\,x_{i}\,h_{i}(1)^{-1}\in T_{H}
\]
(since all tori are conjugate in $H_{0}$, by elements of $H_{0}$).
Then, with $x_{i}(t):=h_{i}(t)\,x_{i}\,h_{i}(t)^{-1}$ we have 
\[
x_{i}(t)^{n_{i}}=h_{i}(t)\,g\,h_{i}(t)^{-1}=g=h_{j}(t)\,x_{j}^{n_{j}}\,h_{j}(t)^{-1}=x_{j}(t)^{n_{j}}
\]
 (since $g\in ZH$, and $x_{i}\in H$) and we have
\[
\mathbf{x}(t)=(x_{1}(t),...,x_{r}(t))\in\hom(\Gamma_\n,G),\text{ with }\mathbf{x}(1)\in\hom(\Gamma_\n,T_{H}).
\]
The result now follows, since 
\[
\hom(\Gamma_\n,T_{H})\subset\hom(\Gamma_\n,T_{G})\subset\hom(\Gamma_\n^{ab},G),
\]
and by Theorem \ref{abelian-thm} the abelian locus $\hom(\Gamma_\n^{ab},G)$
is irreducible, hence path-connected.
\end{proof}

Since $G$ is path-connected and $\mathfrak{X}_{\Gamma_{\mathbf{n}}}(G)$
can be identified with the polystable quotient $\mathfrak{X}_{\Gamma_{\mathbf{n}}}(G)\cong\hom^{*}(\Gamma_\n,G)/G$,
the following is clear.

\begin{cor}
For any generalized torus \emph{knot} group $\Gamma_{\mathbf{n}}$,
$\mathfrak{X}_{\Gamma_{\mathbf{n}}}(G)$ is path-connected.
\end{cor}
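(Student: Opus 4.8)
The final statement to prove is the Corollary: for any generalized torus knot group $\Gamma_{\mathbf{n}}$, the character variety $\mathfrak{X}_{\Gamma_{\mathbf{n}}}(G)$ is path-connected.

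The plan is to deduce this directly from Theorem \ref{thm:path-conn}, which already established that $\hom^{*}(\Gamma_\n,G)$ is path-connected. The key observation is that the character variety is, by definition, the quotient $\mathfrak{X}_{\Gamma_{\mathbf{n}}}(G) = \hom^{*}(\Gamma_\n,G)/G$, equipped with the quotient topology (which, as recalled in the introduction of the excerpt via \cite[Theorem 2.1]{FlLa4}, agrees with the analytic topology on the GIT quotient). Since the quotient map $q\colon \hom^{*}(\Gamma_\n,G) \to \mathfrak{X}_{\Gamma_{\mathbf{n}}}(G)$ is continuous and surjective, and the continuous image of a path-connected space is path-connected, the result is immediate.

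Concretely, the steps are: (1) invoke Theorem \ref{thm:path-conn} to get that $\hom^{*}(\Gamma_\n,G)$ is path-connected; (2) note that $q$ is continuous and surjective by construction of the quotient topology; (3) conclude that $\mathfrak{X}_{\Gamma_{\mathbf{n}}}(G) = q(\hom^{*}(\Gamma_\n,G))$ is path-connected as the continuous image of a path-connected space. One minor point worth stating explicitly is that $G$ itself being path-connected (it is a connected complex algebraic group, hence path-connected in the analytic topology) is not strictly needed here once we have the path-connectedness of $\hom^{*}(\Gamma_\n,G)$ — though it is convenient for identifying endpoints of lifted paths. The phrase ``the following is clear'' in the excerpt signals that this is intended as an essentially one-line argument.

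There is no real obstacle: the entire content of the Corollary was front-loaded into Theorem \ref{thm:path-conn}, and this step is purely formal (continuous images of path-connected spaces are path-connected). If one wanted to be slightly more careful, one could note that the identification $\mathfrak{X}_{\Gamma_{\mathbf{n}}}(G)\cong \hom^{*}(\Gamma_\n,G)/G$ as topological spaces requires the cited homeomorphism with the GIT quotient, but this is already in hand from the preliminaries and costs nothing further.
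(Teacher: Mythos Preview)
Your proposal is correct and follows essentially the same approach as the paper: the corollary is deduced immediately from Theorem~\ref{thm:path-conn} via the continuous surjection $\hom^{*}(\Gamma_\n,G)\to \mathfrak{X}_{\Gamma_{\mathbf{n}}}(G)$. Your observation that path-connectedness of $G$ is not strictly needed once $\hom^{*}(\Gamma_\n,G)$ is known to be path-connected is accurate; the paper mentions it, but the argument goes through without it.
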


\section{Generalized Torus Link Groups and Linear Algebra}\label{sec:la}

In the case of $G=\GL(m,\mathbb{C})$ it is easy to see that $\rho$
is polystable if and only if it can be written as a direct sum:
${\displaystyle \rho=\bigoplus_{j=1}^{s}\rho_{j},}$
where all the $\rho_{j}\in\hom(\Gamma,\GL(m_{j},\mathbb{C}))$ are
irreducible, for a given partition of $m$ as $m=m_{1}+\cdots+m_{s}$.

This defines a stratification of $\hom^*(\Gamma, G)$.  For the top stratum, let $\hom^{irr}(\Gamma, G)$ be the subspace of irreducible representations; necessarily a Zariski open subset of the affine variety $\hom(\Gamma, G)$.  Given a partition $P=(m_1,...,m_s)$, we define  $$\hom^P(\Gamma, G):=\{\rho\in \hom(\Gamma,G)\ |\ \exists g\in G\text{ so }g\rho g^{-1}\in \bigoplus_{j=1}^s\hom^{irr}(\Gamma,\GL(m_{j},\mathbb{C}))\}.$$  For example, $\hom^{irr}(\Gamma, G)=\hom^{(m)}(\Gamma, G)$.  Clearly these strata are disjoint from each other, and since polystability and complete-reducibility coincide, we conclude $\hom^*(\Gamma, G)=\sqcup_{P}\hom^P(\Gamma, G)$.  Since reducibility of fixed parabolic type is a Zariski closed condition and irreducibility is a Zariski open condition, each of these strata is locally closed in the Zariski topology.

We need the following elementary lemma.

\begin{lem}\label{lem:eig}
Let $A\in \GL(m,\mathbb{C})$, and $Eig(A)$ denote
the span of the eigenvectors of $A$. Then, for all $k$ in $\mathbb{Z}- \{0\}$, $Eig(A)=Eig(A^{k})$.
\end{lem}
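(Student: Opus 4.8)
The plan is to show both inclusions $Eig(A)\subseteq Eig(A^k)$ and $Eig(A^k)\subseteq Eig(A)$, working over $\C$ where everything is simultaneously triangularizable. For the forward inclusion, if $v$ is an eigenvector of $A$ with $Av=\lambda v$, then $A^k v=\lambda^k v$, so $v$ is an eigenvector of $A^k$; since $Eig(A)$ is spanned by such $v$, we get $Eig(A)\subseteq Eig(A^k)$. Note this direction needs no invertibility of $A$ and no restriction on $k$ beyond $k\geq 1$; the hypothesis $A\in\GL(m,\C)$ and $k\in\Z-\{0\}$ matters for the reverse inclusion and for making sense of negative powers.

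For the reverse inclusion, the cleanest route is to reduce to the forward inclusion applied to a suitable power. First I would handle $k$ negative: since $A$ is invertible, $A^{-1}$ is a well-defined element of $\GL(m,\C)$ with the same eigenvectors as $A$ (from $Av=\lambda v$, $\lambda\neq 0$, we get $A^{-1}v=\lambda^{-1}v$), and $A^{-k}=(A^{-1})^{|k|}$, so it suffices to treat $k\geq 1$. Now the key observation is that $Eig(A)$ and $Eig(A^k)$ both equal the sum of the eigenspaces, which is the same as the span of the generalized eigenvectors that are actually eigenvectors; equivalently, $Eig(B)$ is the image of the projection onto the semisimple part in the additive Jordan decomposition, intersected appropriately — but more elementarily, $\dim Eig(B)=\sum_{\mu} (\text{geometric multiplicity of }\mu)= m - \sum_\mu(\text{number of nontrivial Jordan blocks for }\mu)$. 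The real content is that passing from $A$ to $A^k$ does not create or destroy Jordan blocks in a way that changes $Eig$: a single Jordan block $J_\ell(\lambda)$ with $\lambda\neq 0$ has $J_\ell(\lambda)^k$ again a single Jordan block (for one eigenvalue $\lambda^k$) of the same size $\ell$, hence contributes a one-dimensional eigenspace both before and after. I would phrase this via the Jordan form: write $A=S J S^{-1}$; it suffices to prove the statement for $J$, and then block by block for a single Jordan block $J_\ell(\lambda)$ with $\lambda\neq 0$, where one checks directly that $J_\ell(\lambda)^k - \lambda^k I$ has rank exactly $\ell-1$, so its eigenspace (kernel) is one-dimensional and equals $Eig(J_\ell(\lambda))=\langle e_1\rangle$.

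An alternative, slicker argument for the reverse inclusion avoiding Jordan form: given the forward inclusion $Eig(A)\subseteq Eig(A^k)$ for every $k\geq 1$, apply it to the matrix $A^k$ and exponent $N$ chosen so that $(A^k)^N$ has the "same eigenvector structure" as $A$ — this does not quite work directly, so instead use that $A$ is a polynomial in $A^k$. Indeed, since $A\in\GL(m,\C)$, its minimal polynomial has nonzero constant term, and more to the point the eigenvalues of $A^k$ need not be distinct even when those of $A$ are, so I cannot always write $A=p(A^k)$; hence the Jordan-block computation above is the honest approach and I will take it. The main obstacle is precisely this reverse inclusion: one must rule out the possibility that $A^k$ has a strictly larger eigenspace than $A$, which would happen if some Jordan block of $A$ became diagonalizable after taking the $k$-th power — and the point is that over $\C$ with $\lambda\neq 0$ this never happens, because $x\mapsto x^k$ has nonzero derivative at every nonzero point, so $J_\ell(\lambda)^k$ is conjugate to $J_\ell(\lambda^k)$ (same block size). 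That derivative computation, or equivalently the explicit rank count of $(I+N)^k-I$ for the nilpotent shift $N$, is the crux, and it is exactly where invertibility of $A$ (i.e. $\lambda\neq 0$) is used.
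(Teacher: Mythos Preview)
Your proposal is correct and follows essentially the same route as the paper: the trivial inclusion $Eig(A)\subseteq Eig(A^k)$, reduction to Jordan form, reduction to a single Jordan block $J=\lambda I+N$ with $\lambda\neq 0$, and then the verification that the eigenspace of $J^k$ is still one-dimensional and equals $\ker N$. The only difference is cosmetic: where you invoke the fact that $J_\ell(\lambda)^k$ is conjugate to $J_\ell(\lambda^k)$ (via the nonvanishing derivative of $x\mapsto x^k$ at $\lambda\neq 0$, or equivalently a rank count of $(I+N)^k-I$), the paper writes $J^k=\lambda^k I+M$ with $M=\sum_{i=1}^k\binom{k}{i}\lambda^{k-i}N^i$ and argues directly that $\ker M=\ker N$ because $M$ and $N$ share pivot positions. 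Your explicit treatment of negative $k$ by passing to $A^{-1}$ is a small improvement in exposition over the paper, which leaves that case implicit.
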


\begin{proof}
It is easy to see that $Eig(A)\subset Eig(A^{k})$ since for every eigenvector $v\in Eig(A)$ with eigenvalue $\lambda$ we have $A^kv=\lambda^k v$.

For the converse, we can assume that $A$ is in Jordan form since any change of basis for $A$ given by $g\in\GL(m,\C)$ determines a like change of basis for $A^k$, i.e. $(gAg^{-1})^k=gA^kg^{-1}$.  So $A=J_{1}\oplus\cdots\oplus J_{n}$
where each $J_{i}$ is a Jordan block with eigenvalue $\lambda_{i}$. Then, for each block we have only a one dimensional eigenspace with eigenvalue $\lambda_{i}$, and it is clear that $A^{k}=J_{1}^{k}\oplus\cdots\oplus J_{n}^{k}$.  So the result follows if it holds for $n=1$.  

So now assume $A=J_1$ and thus $A=\lambda I+N$ where $I$ is the identity, $N$ is a nilpotent matrix, and $\lambda=\lambda_1$ is the eigenvalue of $A$.  Then $A^k=\lambda^k I+\sum_{i=1}^k\lambda^{k-i}c_i N^i$ for appropriate binomial coefficients $c_i$.  Let $w$ be an eigenvector of $A^k$ with eigenvalue $\mu$, and denote $M=\sum_{i=1}^k\lambda^{k-i}c_i N^i$.  Then $\mu w=A^kw=\lambda^k w+ M w$ which implies that $(\mu-\lambda^k)w=M w$ and so $w$ is an eigenvector of $M$ with eigenvalue $\mu-\lambda^k$.  But $M$ is nilpotent and thus $\mu=\lambda^k$ and $w$ is in the kernel of $M$.  Obviously, $\mathrm{Ker}(N)\subset \mathrm{Ker}(M).$  On the other hand, $N$'s upper-diagonal consists of 1's and $N$ has 0's everywhere else.  So powers of $N$ shift the upper-diagonal up.  Therefore, $M$ and $N$ have the same pivot positions and hence are row equivalent.  Thus,  $\mathrm{Ker}(N)=\mathrm{Ker}(M).$  But if $w\in \mathrm{Ker}(N)$, then $w\in Eig(A)$ with eigenvalue $\lambda$.
\end{proof}

We denote by $\Gamma_\n$ a generalized torus link group, that is, a group admitting a presentation \[
\Gamma_{\mathbf{n}}:=\left\langle \gamma_{1},\ldots,\gamma_{r}\:|\:\gamma_{1}^{n_{1}}=\gamma_{2}^{n_{2}}=\cdots=\gamma_{r}^{n_{r}}\right\rangle,
\] where $n_i\geq 1$ for all $1\leq i\leq r$.

\begin{prop}\label{prop:diagonalization}
Let $G$ be either $\SL(m,\C)$ or $\GL(m,\C)$, and let $\rho\in\hom^*(\Gamma_{\mathbf{n}},G)$ with corresponding
partition $m=m_{1}+\cdots+m_{s}$. Then, every element $\rho(\gamma_{i})$
is diagonalizable and there exists a basis for $\C^m$ and scalars $w_{1},\ldots,w_{s}\in\mathbb{C}^{*}$
such that in this basis:
\[
\rho(\gamma_{i})^{n_{i}}=\bigoplus_{j=1}^{s}\omega_jI_{m_{j}},
\]
for all $i=1,\ldots,r$, where $I_{m}$ denotes the identity matrix
of size $m$.\end{prop}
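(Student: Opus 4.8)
The plan is to first reduce to the case where $\rho$ is irreducible, then handle the irreducible case directly. By definition of the partition $m = m_1 + \cdots + m_s$ associated to a polystable $\rho \in \hom^*(\Gamma_\n, G)$, after conjugation we may write $\rho = \bigoplus_{j=1}^s \rho_j$ with each $\rho_j \in \hom^{irr}(\Gamma_\n, \GL(m_j,\C))$. Since $\rho(\gamma_i)^{n_i}$ respects this block decomposition, it suffices to prove the statement blockwise: namely, that for an \emph{irreducible} $\sigma \in \hom^{irr}(\Gamma_\n, \GL(\ell, \C))$, each $\sigma(\gamma_i)$ is diagonalizable and $\sigma(\gamma_i)^{n_i} = \omega\, I_\ell$ for a single scalar $\omega \in \C^*$ independent of $i$. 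The $\SL$ case then follows from the $\GL$ case since $\SL(m,\C) \subset \GL(m,\C)$ and an element of $\SL$ that is diagonalizable in $\GL$ is diagonalizable in $\SL$.

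So fix an irreducible $\sigma \in \hom^{irr}(\Gamma_\n, \GL(\ell,\C))$ and let $g := \sigma(\gamma_1)^{n_1} = \cdots = \sigma(\gamma_r)^{n_r}$. The key observation is that $g$ commutes with every $\sigma(\gamma_i)$: indeed $g\,\sigma(\gamma_i) = \sigma(\gamma_i)^{n_i+1} = \sigma(\gamma_i)\,g$. Hence $g$ lies in the centralizer of $\sigma(\Gamma_\n)$ in $\GL(\ell,\C)$. Since $\sigma$ is irreducible, Schur's lemma forces this centralizer to consist of scalar matrices, so $g = \omega\, I_\ell$ for some $\omega \in \C^*$. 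This scalar is the same for all $i$ by construction. It remains to see that $\sigma(\gamma_i)$ is diagonalizable: since $\sigma(\gamma_i)^{n_i} = \omega\, I_\ell$ with $n_i \geq 1$, the minimal polynomial of $\sigma(\gamma_i)$ divides $z^{n_i} - \omega$, which has $n_i$ distinct roots in $\C$ (as $\omega \neq 0$); a matrix annihilated by a polynomial with distinct roots is diagonalizable. This establishes the blockwise claim.

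Finally I would reassemble: applying the blockwise result to each $\rho_j$ yields scalars $\omega_1, \ldots, \omega_s \in \C^*$ with $\rho_j(\gamma_i)^{n_i} = \omega_j\, I_{m_j}$ for all $i$; since each $\rho_j(\gamma_i)$ is diagonalizable and the $\rho_j$ act on complementary subspaces, $\rho(\gamma_i)$ is diagonalizable and there is a common basis of $\C^m$ (a concatenation of eigenbases of the blocks, which can be taken uniformly in $i$ within each block because each $\rho_j(\gamma_i)^{n_i}$ is already scalar on the $j$-th block so diagonalizing $\rho_j(\gamma_i)$ individually does not disturb the block structure) in which $\rho(\gamma_i)^{n_i} = \bigoplus_{j=1}^s \omega_j I_{m_j}$ for all $i$. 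I do not expect a genuine obstacle here; the only point requiring a little care is the bookkeeping in the last step — one is choosing, for each block $j$, a single basis of $\C^{m_j}$, whereas \emph{a priori} different $\gamma_i$ might want different eigenbases. This is harmless precisely because the conclusion only asks for $\rho(\gamma_i)^{n_i}$ — not $\rho(\gamma_i)$ itself — to be simultaneously block-scalar, and $\rho_j(\gamma_i)^{n_i} = \omega_j I_{m_j}$ holds in \emph{every} basis of the $j$-th block. (Lemma \ref{lem:eig} is available if one instead wants to track eigenspaces of $\rho(\gamma_i)$ versus $\rho(\gamma_i)^{n_i}$, but it is not strictly needed for this formulation.)
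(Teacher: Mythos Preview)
Your proof is correct, and in fact streamlines the paper's argument. Both proofs reduce to the irreducible case and both invoke Schur's lemma, but the order of operations differs. The paper first proves diagonalizability via Lemma~\ref{lem:eig}: it shows that all $A_i := \rho(\gamma_i)$ share the same span of eigenvectors $W$ (since $Eig(A_i) = Eig(A_i^{n_i}) = Eig(A_j^{n_j}) = Eig(A_j)$), then uses irreducibility to force $W = \C^m$, and only afterwards applies Schur to the commuting element $A_i^{n_i}$ to get the scalar $\omega$. You instead apply Schur immediately to $g = \sigma(\gamma_i)^{n_i}$ (which commutes with all generators), obtain $g = \omega I_\ell$, and then deduce diagonalizability from the fact that $\sigma(\gamma_i)$ satisfies the separable polynomial $z^{n_i} - \omega$. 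Your route is more elementary in that it bypasses Lemma~\ref{lem:eig} entirely; the paper's route has the mild advantage of exhibiting a common invariant subspace explicitly, which is conceptually suggestive but not needed for the statement as phrased.
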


\begin{proof}
It suffices to prove the proposition for  $\GL(m,\C)$.  As $\rho$ is assumed to be polystable, we can decompose $\rho$
as a direct sum of irreducibles; that is, each $\rho(\gamma_{i})$ for $1\leq i\leq r$ is a direct
sum of blocks of size $m_{j}$ for $1\leq j\leq s$. And as diagonalization is a
property preserved by direct sums, we are reduced to consider the irreducible case; that is, we assume $s=1$.  In this case, either $r>1$ or $m=1$.  The $m=1$ case is trivial so further assume $r\geq 2$.

Now write $A_{i}:=\rho(\gamma_{i})\in \GL(m,\mathbb{C})$ for all $i=1,\ldots,r.$ From the definition of $\Gamma_{\n}$ we have:
\begin{equation}\label{eigequal}
A_{1}^{n_{1}}=A_{2}^{n_{2}}=\cdots=A_{r}^{n_{r}}.
\end{equation}
Thus, according to Lemma \ref{lem:eig}, for all $i,j$:
$$
Eig(A_{i})=Eig(A_{i}^{n_{i}})=Eig(A_{j}^{n_{j}})=Eig(A_{j}).
$$
Let $W$ be this common vector space. Then $A_{i}(W)\subset W$ for all $i$, and since $\rho$ is irreducible, by Schur's Lemma, $W=\mathbb{C}^{m}$. This means that $A_{i}$ is diagonalizable for each $i$, and consequently, $A_i^{n_i}$ is diagonalizable for each $i$ too.  Since each $A_i^{n_i}$ commutes with each $A_j$ by Equations \ref{eigequal}, Schur's Lemma also implies there exists $\omega\in \C^*$ so $A_{1}^{n_{1}}=\cdots=A_{r}^{n_{r}}=\omega I_{m}$.
\end{proof}

Denote $[r]:=\{1,...,r\}.$

\begin{cor}\label{important-cor}
Let $\rho\in\hom^{*}(\Gamma_\n,\GL(m,\mathbb{C}))$ be an \emph{ irreducible}
representation. Then, every element $\rho(\gamma_i)$ is diagonalizable
and there exists $\omega\in\mathbb{C}^{*}$ such that: $\rho(\gamma_{i})^{n_{i}}=\omega I_m$, for all $i\in [r]$.
\end{cor}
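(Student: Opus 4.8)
The plan is to observe that this is simply the $s=1$ specialization of Proposition \ref{prop:diagonalization}. An irreducible representation $\rho$ is in particular completely-reducible, hence polystable (as recalled after \cite[Theorem 30]{Sikora-CharVar}), so it has a well-defined associated partition $m=m_1+\cdots+m_s$ in the sense of the stratification of $\hom^*(\Gamma,G)$; and irreducibility means precisely that this partition is the trivial one, $P=(m)$, i.e. $s=1$. Plugging $s=1$ into the conclusion of Proposition \ref{prop:diagonalization} gives that each $\rho(\gamma_i)$ is diagonalizable and that there is a single scalar $\omega_1\in\C^*$ with $\rho(\gamma_i)^{n_i}=\omega_1 I_m$ for all $i$; setting $\omega:=\omega_1$ finishes the argument.

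Alternatively, I would give the self-contained version of the $s=1$ argument already embedded in the proof of Proposition \ref{prop:diagonalization}: write $A_i:=\rho(\gamma_i)\in\GL(m,\C)$, note $A_1^{n_1}=\cdots=A_r^{n_r}$ from the presentation of $\Gamma_\n$, apply Lemma \ref{lem:eig} to get $Eig(A_i)=Eig(A_i^{n_i})=Eig(A_j^{n_j})=Eig(A_j)$, so this common span $W$ is invariant under every $A_i$; since $\rho$ is irreducible, Schur's Lemma forces $W=\C^m$, so each $A_i$ (and hence each $A_i^{n_i}$) is diagonalizable. Finally, each $A_i^{n_i}$ commutes with every $A_j$, so a second application of Schur's Lemma produces $\omega\in\C^*$ with $A_i^{n_i}=\omega I_m$ for all $i$.

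There is no real obstacle here: the corollary is an immediate consequence of the proposition, and the only point that warrants a sentence of justification is the identification of ``irreducible'' with the partition $s=1$ in the stratification. I would keep the written proof to one or two lines, citing Proposition \ref{prop:diagonalization} directly.
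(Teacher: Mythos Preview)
Your proposal is correct and matches the paper's approach exactly: the corollary is stated in the paper with no separate proof, as it is simply the $s=1$ case of Proposition \ref{prop:diagonalization}. Your one-line justification that irreducibility corresponds to the trivial partition $P=(m)$ is all that is needed; the alternative self-contained argument you sketch is just the irreducible case already handled inside the proof of that proposition.
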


\begin{rem}
The converse of the above corollary is not always true. For example, the trivial representation $\rho(\gamma)=I_m$ for all $\gamma\in \Gamma_\n$, is reducible and $\rho(\gamma_i)^{n_i}=\omega I_m$ for all $i$ with $\omega=1$.
\end{rem}

Given the above corollary, it is natural to ask what are the matrix solutions to $A^n=\omega I$.  According to \cite[page 173, Theorem 7.1]{Higham:2008:FM} the solutions are exactly $BDB^{-1}$ where $B\in\GL(m,\C)$ and $D\in \GL(m,\C)$ is diagonal such that $D^n=\omega I$. We now prove the comparable statement in our setting.

\begin{prop}\label{prop:solutions}Let $G$ be either $\GL(m,\C)$ or $\SL(m,\C)$.
Let $F_r$ be a rank $r$ free group, and let ${\displaystyle \rho=\bigoplus_{j=1}^{s}\rho_{j}}$ be in $\hom^*(F_r,G)$ with respect to a partition $m=m_{1}+\cdots+m_{s}$.  Then $\rho\in \hom^*(\Gamma_\n,G)$ if and only if for each $i,j$, $\rho_j(\gamma_i)=B_{i,j}D_{i,j}B_{i,j}^{-1}$ where $B_{i,j}\in \GL(m_j,\C)$, $D_{i,j}\in\GL(m_j,\C)$ or $\SL(m_j,\C)$ is diagonal, and for each $j$, $D_{i,j}^{n_i}=D_{k,j}^{n_k}=\omega_jI_{m_{j}}$ for all $i,k$ for some $\omega_j\in \C^*$. 
\end{prop}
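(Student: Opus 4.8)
The plan is to reduce everything to the irreducible case, where Proposition \ref{prop:diagonalization} and Corollary \ref{important-cor} do most of the work, and then reassemble via the direct-sum decomposition. For the forward direction, suppose $\rho=\bigoplus_{j=1}^s\rho_j\in\hom^*(\Gamma_\n,G)$. Restricting the relation $\gamma_i^{n_i}=\gamma_k^{n_k}$ to each irreducible summand $\rho_j$ shows that each $\rho_j$, a priori a representation of $F_r$, actually factors through $\Gamma_\n$, hence lies in $\hom^{irr}(\Gamma_\n,\GL(m_j,\C))$. Applying Corollary \ref{important-cor} to each $\rho_j$ yields that $\rho_j(\gamma_i)$ is diagonalizable and $\rho_j(\gamma_i)^{n_i}=\omega_j I_{m_j}$ for some $\omega_j\in\C^*$ independent of $i$. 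Diagonalizability gives $\rho_j(\gamma_i)=B_{i,j}D_{i,j}B_{i,j}^{-1}$ with $D_{i,j}$ diagonal, and then $D_{i,j}^{n_i}=B_{i,j}^{-1}\rho_j(\gamma_i)^{n_i}B_{i,j}=\omega_j I_{m_j}=D_{k,j}^{n_k}$ for all $i,k$, which is exactly the asserted condition. (In the $\SL$ case, one must additionally check that each $D_{i,j}$ can be taken in $\SL(m_j,\C)$; since conjugation preserves the determinant and $\rho_j(\gamma_i)\in\GL(m_j,\C)$ need not have determinant one on each block, the determinant-one constraint is only on the full block-diagonal matrix $\rho(\gamma_i)$, so I would state the $D_{i,j}$ as lying in $\GL(m_j,\C)$ with the product of determinants equal to one, matching the phrasing ``$\GL(m_j,\C)$ or $\SL(m_j,\C)$'' in the statement.)

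For the converse, suppose $\rho=\bigoplus_j\rho_j\in\hom^*(F_r,G)$ is such that each $\rho_j(\gamma_i)=B_{i,j}D_{i,j}B_{i,j}^{-1}$ with $D_{i,j}$ diagonal and $D_{i,j}^{n_i}=\omega_j I_{m_j}$ for all $i$, for a fixed $\omega_j\in\C^*$. Then for each $j$ and each pair $i,k$ we have $\rho_j(\gamma_i)^{n_i}=B_{i,j}D_{i,j}^{n_i}B_{i,j}^{-1}=\omega_j I_{m_j}=B_{k,j}D_{k,j}^{n_k}B_{k,j}^{-1}=\rho_j(\gamma_k)^{n_k}$, so $\rho_j$ respects the defining relations of $\Gamma_\n$ and hence $\rho_j\in\hom(\Gamma_\n,\GL(m_j,\C))$. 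Taking the direct sum over $j$, and using that $\rho(\gamma_i)^{n_i}=\bigoplus_j \omega_j I_{m_j}$ is independent of $i$, we get $\rho\in\hom(\Gamma_\n,G)$. Finally, $\rho$ is polystable in $\hom^*(F_r,G)$ by hypothesis, and since $\hom(\Gamma_\n,G)\subset\hom(F_r,G)$ is a closed $G$-stable subvariety, the orbit $G\cdot\rho$ is still closed inside $\hom(\Gamma_\n,G)$; thus $\rho\in\hom^*(\Gamma_\n,G)$.

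The genuinely new content beyond citing the earlier results is modest: it is the bookkeeping that lets one pass between ``$\rho_j$ is a summand of a $\Gamma_\n$-representation'' and ``$\rho_j$ is an irreducible $\Gamma_\n$-representation,'' together with the observation that polystability only interacts with the decomposition through $F_r$ (Lemma \ref{lem:closed} and the fact that irreducibility is the same notion in $\hom(F_r,G)$ and $\hom(\Gamma_\n,G)$ for a fixed tuple). The main obstacle I anticipate is purely cosmetic rather than mathematical: getting the $\SL$ versus $\GL$ normalization of the diagonal factors $D_{i,j}$ stated correctly, since one cannot in general force determinant one block-by-block, only globally. I would handle this by phrasing the determinant condition as a constraint on $\prod_j \det(D_{i,j})$ (equal to $1$ in the $\SL$ case, arbitrary in the $\GL$ case), which is consistent with, and I believe is the intended reading of, the statement as written.
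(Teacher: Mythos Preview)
Your proposal is correct and follows essentially the same route as the paper: both directions reduce to the irreducible summands, invoke Proposition \ref{prop:diagonalization} (or its Corollary \ref{important-cor}) to get diagonalizability and the scalar $\omega_j$, and then unwind the conjugation to pin down $D_{i,j}^{n_i}=\omega_j I_{m_j}$. Your treatment of polystability in the converse (via the closed embedding $\hom(\Gamma_\n,G)\hookrightarrow\hom(F_r,G)$, i.e.\ Lemma \ref{lem:closed}) is slightly more explicit than the paper's ``polystable by construction,'' and your caveat about the $\SL$ block-by-block determinant issue is a fair reading of the statement; the paper simply sidesteps this by noting it suffices to treat $\GL(m,\C)$.
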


\begin{proof}
It suffices to prove the result for $G=\GL(m,\C)$. First suppose $\rho\in\hom(\Gamma_{\n},\GL(m,\C))$ is polystable. Proposition \ref{prop:diagonalization} says every $\rho_j$ in  $\hom(\Gamma_\n,\GL(m_j,\C))$ has diagonalizable components $\rho_j(\gamma_i)$ and $\rho_j(\gamma_i)^{n_i}=\omega_jI_{m_j}$ for some $\omega_j\in \C^*$ for all $i$.  So there exists $B_{i,j}\in \GL(m_j,\C)$ and diagonal $D_{i,j}\in \GL(m_j,\C)$ so $A_{i,j}:=\rho_j(\gamma_i)=B_{i,j}D_{i,j}B_{i,j}^{-1}$ for each $i,j$.  Thus, for all $i,j$: $$B_{i,j}D_{i,j}^{n_i}B_{i,j}^{-1}=(B_{i,j}D_{i,j}B_{i,j}^{-1})^{n_i}=\omega_jI_{m_{j}},$$ and so $D_{i,j}^{n_i}=B_{i,j}^{-1}(\omega_jI_{m_{j}})B_{i,j}=\omega_jI_{m_{j}}$ for all $i,j$.  Therefore, for each $j$, $D_{i,j}^{n_i}=D_{k,j}^{n_k}$ for all $i,k$.  

Conversely, suppose for each $j$, $D_{i,j}^{n_i}=D_{k,j}^{n_k}$ for all $i,k$.  Then define ${\displaystyle \rho=\bigoplus_{j=1}^{s}\rho_{j}}$ where $\rho_j(\gamma_i):=B_{i,j}D_{i,j}B_{i,j}^{-1}$ for any choice of $B_{i,j}\in \GL(m_j,\C)$ that preserves polystability.  Proposition \ref{prop:diagonalization} implies for each $j$ there exists $\omega_j\in \C^*$ so that $D_{i,j}^{n_i}=\omega_jI_{m_{j}}$ for all $i$.  Running the calculations in the previous paragraph in reverse shows that ${\displaystyle \rho:=\bigoplus_{j=1}^{s}\rho_{j}}$ is an element of $\hom(\Gamma_\n,\GL(m,\C))$; it is polystable by construction.
\end{proof}

By Proposition \ref{prop:solutions}, for any partition $P$ of $m$, and  $\rho \in \hom^P(\widehat{\Gamma}_{\n},G)$, up to conjugation, $\rho(t)=\oplus_{j=1}^s\omega_jI_{m_j}$.  Denote by
\[
\hom^P_1(\widehat{\Gamma}_{\mathbf{n}},G)
\]
the representations in  $\hom^P(\widehat{\Gamma}_{\mathbf{n}},G)$ such that $|\omega_j|=1$ for all $j$.

\begin{lem}
\label{lem:def-S1}For any partition $P$, there is a $G$-equivariant SDR from $\hom^P(\hat{\Gamma}_{\mathbf{n}},G)$
to the space $\hom^P_1(\hat{\Gamma}_{\mathbf{n}},G)$.
\end{lem}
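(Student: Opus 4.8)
The plan is to build the strong deformation retraction (SDR) at the level of the scalars $\omega_j\in\C^*$ attached to $\rho(t)$, and then lift it conjugation-equivariantly using Proposition \ref{prop:solutions}. First I would fix a partition $P=(m_1,\ldots,m_s)$ and recall that, by Proposition \ref{prop:solutions}, a point $\rho\in\hom^P(\widehat{\Gamma}_\n,G)$ is (up to conjugation) the data of, for each block $j$ and each generator $\gamma_i$, a conjugator $B_{i,j}$ and a diagonal matrix $D_{i,j}$ with $D_{i,j}^{n_i}=\omega_j I_{m_j}$, where $\omega_j=\omega_j(\rho)$ is exactly the eigenvalue of $\rho(t)$ on the $j$-th block. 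The deformation will move each $\omega_j$ radially toward the unit circle while simultaneously adjusting the eigenvalues of each $D_{i,j}$ by a matching radial scaling, keeping the relation $D_{i,j}^{n_i}=\omega_j I_{m_j}$ satisfied at every time $\tau\in[0,1]$.

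Concretely, write each eigenvalue $d$ of $D_{i,j}$ in polar form $d=\lambda e^{\sqrt{-1}\theta}$ with $\lambda>0$; since $d^{n_i}=\omega_j$, we have $\lambda^{n_i}=|\omega_j|$. For $\tau\in[0,1]$ define the rescaling $d(\tau):=\lambda^{1-\tau}e^{\sqrt{-1}\theta}$ applied eigenvalue-by-eigenvalue to get $D_{i,j}(\tau)$, and correspondingly $\omega_j(\tau):=|\omega_j|^{1-\tau}\,\omega_j/|\omega_j|$. Then $D_{i,j}(\tau)^{n_i}=\lambda^{(1-\tau)n_i}e^{\sqrt{-1}n_i\theta}I_{m_j}=|\omega_j|^{1-\tau}(\omega_j/|\omega_j|)I_{m_j}=\omega_j(\tau)I_{m_j}$ for each $\tau$, so the defining relations of Proposition \ref{prop:solutions} persist; at $\tau=0$ nothing moves and at $\tau=1$ all $|\omega_j|=1$, so the endpoint lies in $\hom^P_1(\widehat{\Gamma}_\n,G)$, and points already in $\hom^P_1$ are fixed throughout. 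Setting $\rho_\tau(\gamma_i):=\bigoplus_j B_{i,j}D_{i,j}(\tau)B_{i,j}^{-1}$ (and $\rho_\tau(t)=\bigoplus_j\omega_j(\tau)I_{m_j}$) gives a homotopy through $\hom^P(\widehat{\Gamma}_\n,G)$ by Proposition \ref{prop:solutions}, and it is $G$-equivariant because conjugating $\rho$ by $g\in G$ merely replaces each $B_{i,j}$ by (the block components of) $gB_{i,j}$ while leaving the $D_{i,j}$ — hence the eigenvalue data driving the deformation — unchanged.

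The main obstacle is showing this is \emph{well-defined and continuous} as a map on the honest representation variety $\hom^P(\widehat{\Gamma}_\n,G)$ rather than on the auxiliary parameter space of tuples $(B_{i,j},D_{i,j})$: the decomposition into blocks and the choice of eigenvalues are not globally continuous selections, and the "polar form" $d\mapsto(\lambda,\theta)$ is not globally single-valued on $\C^*$. I would resolve the radial-scaling ambiguity by noting that the operation $d\mapsto \lambda^{1-\tau}e^{\sqrt{-1}\theta}=|d|^{-\tau}d$ depends only on $d$ itself, so it is a genuine continuous map $\C^*\times[0,1]\to\C^*$; applying it to a diagonalizable matrix $A$ with $A^{n_i}=\omega I$ yields $A_\tau:=$ ``$|A|^{-\tau}A$'' defined via functional calculus (equivalently $A_\tau=g\,\mathrm{diag}(|d_k|^{-\tau}d_k)\,g^{-1}$ for any diagonalizing $g$, independent of $g$), which is continuous in $(A,\tau)$ and satisfies $A_\tau^{n_i}=|\omega|^{-\tau}\omega\,I$. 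This sidesteps choosing eigenvalues. For the block decomposition, I would argue the deformation is defined first on each Zariski-locally-closed piece where the block structure is constant and then check the pieces glue, or more cleanly observe that on $\hom^P(\widehat{\Gamma}_\n,G)$ the value $\rho(t)$ is a semisimple element whose distinct eigenvalues $\omega_1,\ldots$ need not be globally separable but the map $\rho(t)\mapsto$ ``apply $|\cdot|^{-\tau}(\cdot)$ to $\rho(t)$'' is still a continuous matrix function; combined with the same functional-calculus rescaling applied to each $\rho(\gamma_i)$ (which is diagonalizable by Proposition \ref{prop:diagonalization}), one gets a continuous, $G$-equivariant family $\rho_\tau$ landing in $\hom^P(\widehat{\Gamma}_\n,G)$ by Proposition \ref{prop:solutions} and in $\hom^P_1$ at $\tau=1$, which is precisely the required SDR.
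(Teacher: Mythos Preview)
Your proposal is correct and rests on the same idea as the paper's proof: radially rescale the eigenvalues of the generators so that each $\omega_j$ flows to $\omega_j/|\omega_j|$ while preserving the relations $A_i^{n_i}=B$. The paper's argument is shorter: it first reduces to the irreducible case by the sentence ``a direct sum of SDRs is an SDR,'' and then, since $B=\omega I_m$ in that case, writes down the explicit homotopy $(A_1,\ldots,A_r,B)\mapsto(A_1|\omega|^{-s/n_1},\ldots,A_r|\omega|^{-s/n_r},|\omega|^{-s}\omega I_m)$, which is visibly continuous and $G$-equivariant because it only multiplies by scalars.

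What you do differently is refuse to take that reduction on faith: you correctly flag that a block decomposition of a polystable $\rho$ is neither unique nor continuously chosen, so ``direct sum of SDRs'' is not literally a formula on $\hom^P(\widehat{\Gamma}_\n,G)$. Your fix via the functional-calculus map $A\mapsto|A|^{-\tau}A$ is a clean way around this, and on each irreducible block it collapses exactly to the paper's scalar multiplication (since all eigenvalues of $A_i$ restricted to the $\omega_j$-block have the common modulus $|\omega_j|^{1/n_i}$). So the two arguments agree formula-by-formula; yours simply supplies the global continuity justification that the paper's one-line reduction elides. The payoff of the paper's route is brevity and transparency of $G$-equivariance (multiplication by central elements); the payoff of yours is that it is honestly defined on all of $\hom^P(\widehat{\Gamma}_\n,G)$ without any auxiliary choices.
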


\begin{proof}
Since $\rho\in\hom^P(\hat{\Gamma}_{\mathbf{n}},G)$ is a direct sum of irreducible sub-representations and a direct sum of SDRs is an SDR, it suffices to show the irreducible case.

An element $\rho\in\hom^{irr}(\hat{\Gamma}_{\mathbf{n}},G)$ is defined
by its value on the generators $\gamma_{1},\ldots,\gamma_{r},t$.
Suppose these values form the tuple $$(A_{1},\ldots,A_{r},B)\in G^{r+1}.$$
These satisfy $A_{i}^{n_{i}}=B$, for every $1\leq i\leq r$. By Proposition
\ref{prop:diagonalization} and the irreducibility hypothesis, $B=\omega I_{m}$ with $\omega\in\mathbb{C}^{*}$.
Write $\lambda=|\omega|\in\mathbb{R}_{\geq0}$. Then, the homotopy
\begin{eqnarray*}
[0,1]\times\hom^{irr}(\hat{\Gamma}_{\mathbf{n}},G) & \to & \hom^{irr}(\hat{\Gamma}_{\mathbf{n}},G)\\
(s,A_{1},\ldots,A_{r},B) & \mapsto & (A_{1}\lambda^{-\frac{s}{n}},\ldots,A_{r}\lambda^{-\frac{s}{n}},\lambda^{-s}\omega I),
\end{eqnarray*}
is well-defined since $(A_{i}\lambda^{-\frac{s}{n}})^{n}=A_{i}^{n}\lambda^{-s}=\lambda^{-s}\omega I$.
At $s=0$ the map is the identity, and at $s=1$ it sends $B$ to
$\lambda^{-1}\omega I=\frac{\omega}{|\omega|}I$, so we are done.
Note finally that this homotopy is $G$-equivariant as it only involves
multiplication of each generator by central elements.
\end{proof}

After dealing with the $\GL(m_j,\C)$ case, we can now complete the proof of Theorem \ref{thm:path-conn} by proving Lemma \ref{lem:semisimple}.

\begin{lem}[Lemma \ref{lem:semisimple}]
\label{lem:semisimple-again} Let $\Gamma_{\mathbf{n}}=\left\langle \gamma_{1},...,\gamma_{r}\,|\,\gamma_{i}^{n_{i}}=\gamma_{j}^{n_{j}}\right\rangle $
be a generalized torus \emph{link} group and let $\rho\in\hom(\Gamma,G)$ be polystable. Then all $x_{i}:=\rho(\gamma_{i})\in G$ are semisimple elements.
\end{lem}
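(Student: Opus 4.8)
The plan is to reduce the general reductive $\C$-group statement to the $\GL(m,\C)$ case, which has already been handled implicitly through Propositions \ref{prop:diagonalization} and \ref{prop:solutions}. First I would fix a faithful linear representation $\iota\colon G\hookrightarrow \GL(m,\C)$, which exists since $G$ is a reductive $\C$-group. The key fact I would invoke is that semisimplicity of an element $g\in G$ is detected by any faithful linear representation of $G$: namely, $g\in G$ is semisimple if and only if $\iota(g)$ is diagonalizable in $\GL(m,\C)$; this is the result of Borel cited in the proof of Theorem \ref{flawed-thm}.

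Next I would observe that if $\rho\in\hom(\Gamma_\n,G)$ is polystable, then $\iota\circ\rho\in\hom(\Gamma_\n,\GL(m,\C))$ need not itself be polystable, so a little care is required here; this is the step I expect to be the main obstacle. The clean way around it is to use Lemma \ref{lem:closed} together with Proposition \ref{prop:Rich1}: polystability of $\rho$ means the tuple $\mathbf{x}=(\rho(\gamma_1),\ldots,\rho(\gamma_r))$ has closed $G$-orbit in $G^r$, hence the Zariski closure $H=\langle\mathbf{x}\rangle$ is a reductive $\C$-group; then $H\hookrightarrow \GL(m,\C)$ is again a faithful representation of a reductive group whose image is generated (as an algebraic group) by the $x_i$, and one argues the $x_i$ remain a polystable tuple in $\hom(F_r,\GL(m,\C))$ because the closure of their generated subgroup, $\iota(H)$, is still reductive (Richardson's criterion again). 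An alternative, perhaps cleaner, route: it suffices to show each $x_i$ is semisimple as an element of $H$, and within the reductive group $H$ we may pick a maximal torus containing — after conjugation — as much as possible; but the most direct path is simply to note that in $\GL(m,\C)$, Proposition \ref{prop:diagonalization} (or Proposition \ref{prop:solutions}) shows that for any polystable representation of $\Gamma_\n$ into $\GL(m,\C)$ every $\rho(\gamma_i)$ is diagonalizable, and then transfer this back.

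Concretely, the steps in order are: (1) let $\rho\in\hom(\Gamma_\n,G)$ be polystable and let $\mathbf{x}=(\rho(\gamma_1),\ldots,\rho(\gamma_r))$; by Lemma \ref{lem:closed} and Proposition \ref{prop:Rich1} the group $H=\langle\mathbf{x}\rangle$ is a reductive $\C$-group; (2) choose a faithful representation $\iota\colon H\hookrightarrow\GL(m,\C)$, so $\iota(H)$ is reductive and is the Zariski closure of the subgroup generated by $\iota(x_1),\ldots,\iota(x_r)$, whence by Proposition \ref{prop:Rich1} the tuple $(\iota(x_1),\ldots,\iota(x_r))$ is polystable in $\hom(F_r,\GL(m,\C))$; (3) since $\iota\circ\rho$ still satisfies the defining relations $\iota(x_i)^{n_i}=\iota(x_j)^{n_j}$, it lies in $\hom(\Gamma_\n,\GL(m,\C))$ and is polystable there, so Proposition \ref{prop:diagonalization} applies and each $\iota(x_i)$ is diagonalizable; (4) by Borel's theorem that semisimplicity is detected by faithful linear representations, each $x_i$ is semisimple in $H$, hence semisimple in $G$.

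One caveat I would flag: in step (3) I am using that a polystable tuple in $\hom(F_r,\GL(m,\C))$ which happens to satisfy the relations of $\Gamma_\n$ is polystable as an element of $\hom(\Gamma_\n,\GL(m,\C))$ — this follows because polystability only depends on closedness of the $\GL(m,\C)$-orbit, and $\hom(\Gamma_\n,\GL(m,\C))\subset\hom(F_r,\GL(m,\C))$ is a closed embedding, so a closed orbit stays closed in the subvariety. Thus no separate argument is needed there. The genuinely load-bearing inputs are Richardson's criterion (Proposition \ref{prop:Rich1}), Proposition \ref{prop:diagonalization}, and Borel's detection-of-semisimplicity result; everything else is bookkeeping. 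I expect the only subtlety to be making sure that passing to a faithful representation of the possibly-disconnected reductive group $H$ preserves the ``Zariski closure of the generated subgroup" description, which is immediate since $\iota$ is a closed immersion of algebraic groups.
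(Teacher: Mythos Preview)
Your proposal is correct and follows essentially the same route as the paper: use Richardson's criterion (Proposition \ref{prop:Rich1}) to ensure that the Zariski closure of the image is reductive, pass through a faithful closed immersion into $\GL(m,\C)$ so that polystability is preserved, apply Proposition \ref{prop:diagonalization} there, and pull semisimplicity back via Borel. The only cosmetic differences are that the paper embeds $G$ itself (rather than $H=\langle\mathbf{x}\rangle$) into $\GL(m,\C)$ and phrases the final step using the abstract Jordan decomposition $x_i=s_iu_i$ explicitly, concluding $\phi(u_i)=I$ forces $u_i=e$; your appeal to ``semisimplicity is detected by faithful representations'' is the same content packaged slightly differently.
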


\begin{proof} Consider the abstract Jordan decomposition $x=su$ of an element $x\in G$, where $s$ is semisimple and $u$ is unipotent. It is well-known that \cite{borel}, for a faithful
representation $\phi:G\to \GL(n,\C)$, $\phi(x)=\phi(s)\phi(u)$ is the
Jordan decomposition of $\phi(x)\in \GL(n,\C)$.  We can take $\phi$ to be a closed immersion \cite{milne}. Then
$\rho$ being $G$-polystable is equivalent to the image $\rho(\Gamma)$ being completely reducible which is equivalent to the Zariski closure $\overline{\rho(\Gamma)}$ being reductive \cite{Rich}.  Since $\phi$ is both a closed
immersion and also faithful we know that $\phi\left(\overline{(\rho(\Gamma))}\right)$ is
also reductive and is the Zariski closure of $\phi(\rho(\Gamma))$.  Thus, $\phi(\rho(\Gamma))$ is completely reducible in $\GL(n,\C)$ and thus $\phi\circ \rho$ is $\GL(n,\C)$-polystable.  Let $\rho(\gamma_i)=x_{i}:=s_{i}u_{i}$ be the Jordan decomposition for each $i$.  Since $\rho$ is polystable we just showed that this implies that $\phi\circ\rho$ is polystable too, and thus Proposition \ref{prop:diagonalization} implies $y_{i}:=\phi(x_{i})$ is semisimple for all $i$. So $\phi(u_{i})$ is the identity matrix for all $i$. Since $\phi$ is a monomorphism we conclude that $u_{i}=e$, the identity element in $G$, for all $i$ too. Therefore, $x_{i}$ is semisimple for all $i$ as required.
\end{proof}

\section{Character Varieties of Generalized Torus Link Groups} \label{sec:irrcomp}

\subsection{Character variety of $*\mathbb{Z}_{\mathbf{n}}$}

Let us now consider $*\mathbb{Z}_{\mathbf{n}}=\mathbb{Z}_{n_{1}}*\mathbb{Z}_{n_{2}}*\cdots*\mathbb{Z}_{n_{r}}$,
the free product of cyclic groups of orders $n_{i}$. From Theorem \ref{flawed-thm}, the character varieties $\X_{\Gamma_{\mathbf{n}}}(G)$ and $\X_{*\mathbb{Z}_{\mathbf{n}}}(G)$ are closely related. The description of the irreducible components of the latter is easier, as we now explain.

Firstly, observe that $\hom(*\mathbb{Z}_{\mathbf{n}},G)$ is given
by a cartesian product:
\[
\hom(*\mathbb{Z}_{\mathbf{n}},G)=\times_{i=1}^{r}\hom(\mathbb{Z}_{n_{i}},G).
\]
Moreover, letting $I\in G$ be the identity, we have:
\[
\hom(\mathbb{Z}_{n},G)=\sqrt[n]{I}^G,
\]
as discussed in Appendix \ref{appendix:roots}.

\begin{prop} 
Let $G=\GL(m,\C)$. There are $N:=\prod_{i=1}^{r}\binom{m+n_{i}-1}{m}$
distinct irreducible components in $\hom(*\mathbb{Z}_{\mathbf{n}},\GL(m,\C))$.
Each component contains an abelian representation $\rho$ with $\rho(\gamma)$
diagonal for every $\gamma\in*\mathbb{Z}_{\mathbf{n}}$, and is isomorphic
to a homogeneous space of the form:
\[
G/H_{1}\times\cdots\times G/H_{r}
\]
where $H_{r}\subset\GL(m,\C)$ are Levi subgroups. The character variety
$\X_{*\mathbb{Z}_{\mathbf{n}}}(\GL(m,\C))$ has also $N$ irreducible
components, of dimensions between $0$ and $(m-1)(rm-m-1)$. 
\end{prop}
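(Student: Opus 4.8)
The plan is to reduce everything to a single cyclic factor and then pass to the product. As recalled just before the statement, $\hom(*\mathbb{Z}_{\mathbf{n}},\GL(m,\C))=\prod_{i=1}^{r}\hom(\mathbb{Z}_{n_{i}},\GL(m,\C))$ as affine varieties with the diagonal conjugation action of $\GL(m,\C)$, and $\hom(\mathbb{Z}_{n},\GL(m,\C))=\{A\in\GL(m,\C):A^{n}=I\}$, so it suffices to understand one such factor. Since $x^{n}-1$ is squarefree, every such $A$ is diagonalizable with eigenvalues among the $n$-th roots of unity, so its conjugacy class is determined by the multiplicity vector $\mathbf{k}=(k_{1},\ldots,k_{n})$ with $k_{j}\geq 0$ and $\sum_{j}k_{j}=m$; there are $\binom{m+n-1}{m}$ of these. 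Each class is a single orbit of the connected group $\GL(m,\C)$, hence irreducible, and it is closed since semisimple elements have closed conjugation orbits; as there are finitely many, they are pairwise disjoint clopen irreducible subsets and therefore are precisely the connected (= irreducible) components. The component for $\mathbf{k}$ is the orbit of the diagonal matrix $d_{\mathbf{k}}$ with eigenvalue $\zeta^{j}$ repeated $k_{j}$ times, so it equals $\GL(m,\C)/L_{\mathbf{k}}$ with $L_{\mathbf{k}}=Z_{\GL(m,\C)}(d_{\mathbf{k}})=\prod_{j}\GL(k_{j},\C)$ a standard Levi subgroup.

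Taking products, the irreducible components of $\hom(*\mathbb{Z}_{\mathbf{n}},\GL(m,\C))$ are the sets $\prod_{i=1}^{r}\GL(m,\C)/L_{\mathbf{k}_{i}}$ indexed by tuples $(\mathbf{k}_{1},\ldots,\mathbf{k}_{r})$, so there are $N=\prod_{i=1}^{r}\binom{m+n_{i}-1}{m}$ of them, and each contains the simultaneously diagonal abelian representation $\gamma_{i}\mapsto d_{\mathbf{k}_{i}}$, for which $\rho(\gamma)$ is diagonal for every $\gamma$. Because $\GL(m,\C)$ is connected it fixes each component setwise, so $\C[\hom(*\mathbb{Z}_{\mathbf{n}},\GL(m,\C))]^{\GL(m,\C)}$ splits as a finite product over the components, and hence $\X_{*\mathbb{Z}_{\mathbf{n}}}(\GL(m,\C))=\hom(*\mathbb{Z}_{\mathbf{n}},\GL(m,\C))\quot\GL(m,\C)$ is the disjoint union of the affine GIT quotients $\bigl(\prod_{i}\GL(m,\C)/L_{\mathbf{k}_{i}}\bigr)\quot\GL(m,\C)$. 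Each summand is a reductive quotient of an irreducible affine variety, hence non-empty and irreducible, so $\X_{*\mathbb{Z}_{\mathbf{n}}}(\GL(m,\C))$ has exactly $N$ irreducible components.

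For the dimension bounds, the lower end is immediate: the component with every $\mathbf{k}_{i}$ concentrated (all generators sent to scalars) is a single point, so $0$ occurs. For the upper bound I would stratify $\X_{*\mathbb{Z}_{\mathbf{n}}}(\GL(m,\C))$ by the isomorphism type of a polystable (hence completely reducible) representation, i.e. by the partition $m=d_{1}+\cdots+d_{k}$ recording the dimensions of its irreducible summands; the corresponding stratum is the image of $\prod_{l}\X^{irr}_{*\mathbb{Z}_{\mathbf{n}}}(\GL(d_{l},\C))$ under the direct-sum map, which has finite fibers by Krull--Schmidt, so the stratum has dimension at most $\sum_{l}\dim\X^{irr}_{*\mathbb{Z}_{\mathbf{n}}}(\GL(d_{l},\C))$. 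If $d\geq2$, an irreducible representation has at least two non-scalar generators (a single matrix of size $\geq2$ has an eigenvector), so it lies in a component $\prod_{i}\GL(d,\C)/L_{\mathbf{k}_{i}}$ with at least two proper Levi factors; since $\dim\GL(d,\C)/L_{\mathbf{k}}=d^{2}-\sum_{j}k_{j}^{2}\leq d^{2}-d$ (as $k_{j}^{2}\geq k_{j}$), that component has dimension $\leq rd(d-1)$, and by Schur's lemma $\GL(d,\C)$ acts on its irreducible locus with stabilizer exactly the centre, whence
\[
\dim\X^{irr}_{*\mathbb{Z}_{\mathbf{n}}}(\GL(d,\C))\leq rd(d-1)-(d^{2}-1)=:f(d)=(d-1)(rd-d-1),
\]
and $f(1)=0$ matches the finite case $d=1$. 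A one-line computation gives $f(a+b)-f(a)-f(b)=2(r-1)ab-1>0$ for $r\geq2$ and $a,b\geq1$, so $f$ is superadditive on positive integers; hence $\sum_{l}f(d_{l})\leq f(m)$, every stratum has dimension $\leq(m-1)(rm-m-1)$, and so does $\X_{*\mathbb{Z}_{\mathbf{n}}}(\GL(m,\C))$. The bound is attained on the irreducible stratum when all $n_{i}\geq m$ (take every $L_{\mathbf{k}_{i}}$ a maximal torus).

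The main obstacle I expect is organizational rather than deep: one must check carefully that forming the GIT quotient commutes with the decomposition of $\hom(*\mathbb{Z}_{\mathbf{n}},\GL(m,\C))$ into its $\GL(m,\C)$-stable irreducible components, so that the count $N$ is transported \emph{exactly} (no components merging or disappearing), and in the dimension step one must correctly reduce each stratum to the irreducible-locus quotients of the smaller $\GL(d_{l},\C)$ via the direct-sum map before invoking superadditivity of $f$. The small lemma that an irreducible representation of dimension $d\geq2$ has at least two non-scalar generators, though elementary, is what links the linear-algebraic description (Propositions \ref{prop:diagonalization} and \ref{prop:solutions}) to the numerical bound, and should be stated explicitly.
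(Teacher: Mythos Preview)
Your argument is correct and, for the component count and the description of each component as a product of homogeneous spaces $G/L_{\mathbf{k}_i}$, it follows exactly the paper's line: reduce to a single cyclic factor, observe that $A^{n}=I$ forces $A$ to be semisimple with eigenvalues among the $n$-th roots of unity, count the $\binom{m+n-1}{m}$ unordered multisets, and invoke orbit--stabilizer to identify each closed conjugacy class with $G/L_{\mathbf{k}}$ for a Levi $L_{\mathbf{k}}$.

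Where you diverge is in the dimension computation for the character variety. The paper argues in one line: the largest components of $\hom(*\mathbb{Z}_{\mathbf{n}},G)$ are isomorphic to $(G/T)^{r}$, of dimension $r(m^{2}-m)$, and then subtracts $\dim G-1=m^{2}-1$ to obtain $(m-1)(rm-m-1)$. Your route is substantially more careful: you stratify $\X_{*\mathbb{Z}_{\mathbf{n}}}(\GL(m,\C))$ by the partition $m=d_{1}+\cdots+d_{k}$ encoding the irreducible summands of a polystable point, bound each stratum by $\sum_{l}f(d_{l})$ with $f(d)=(d-1)(rd-d-1)$ via Schur's lemma, and then use the superadditivity $f(a+b)-f(a)-f(b)=2(r-1)ab-1>0$ to conclude $\sum_{l}f(d_{l})\leq f(m)$. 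This buys you two things the paper's shortcut glosses over: it justifies why subtracting $m^{2}-1$ is legitimate (the generic stabilizer is the center only on the irreducible locus), and it shows the bound holds for \emph{every} component, including when some $n_{i}<m$ so that $(G/T)^{r}$ is not actually realized. Your observation that the bound is attained only when all $n_{i}\geq m$ is also a refinement absent from the paper.

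Your closing worry about the GIT quotient commuting with the clopen decomposition is harmless: since the components are $G$-stable and clopen, the invariant ring splits as a direct product and no merging can occur, exactly as you say.
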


\begin{proof}
Let $T\subset \GL(m,\C)$ be the maximal torus consisting of diagonal matrices, and choose $\xi\in T$ with only $n$-th
roots of unity on the diagonal. From Appendix \ref{appendix:roots}, every irreducible component
of 
\[
\hom(\mathbb{Z}_{n},\GL(m,\C))=\sqrt[n]{I_{m}}
\]
is of the form
\[
G\,\xi\,G^{-1}.
\]
Since these sets are closed orbits, they are either disjoint or coincide.
The conjugation action identifies reordering of choices of $n$-th
roots, and hence the number of such choices is $\binom{m+n-1}{m}=\binom{m+n-1}{n-1}$.
So, for $\hom(*\mathbb{Z}_{\mathbf{n}},G)=\times_{i=1}^{r}\sqrt[n_{i}]{I_{m}}$,
the total number is $N$, as components behave multiplicatively under
cartesian products. 

The orbit-stabilizer theorem states that $G\,\xi\,G^{-1}$ is a homogeneous
space of the form $G/H$ where $H$ is the stabilizer of $\xi$. Moreover,
since $\xi\in G$ is semisimple, $H$ is a Levi subgroup whose dimension
is $\sum_{j=1}^\ell k_{j}^{2}$, where $k_{1},...,k_\ell$ are the multiplicities of
each eigenvalue of $\xi$, since $H\cong \GL(k_{1},\C)\times \GL(k_{2},\C)\times\cdots\times \GL(k_\ell,\C)$.
Hence, the components of maximal dimension of $\hom(*\mathbb{Z}_{\mathbf{n}},G)$
are isomorphic to 
\[
(G/T)^{r}
\]
with dimension $r(m^{2}-m)$. So, the character variety has dimension
$(m-1)(rm-m-1)$. There are $n_{1}\cdots n_{r}$ zero-dimensional
components, corresponding to the choices of scalar $\xi$, in each
factor. 
\end{proof}

\begin{example}
Let us detail the case $m=2$, for which $T\cong(\mathbb{C}^{*})^{2}$.
All irreducible components contain $\rho=(\xi_{1},\ldots,\xi_{r})$
with $\xi_{i}\in T$, and are isomorphic to:
\[
(\GL(2,\C)/T)^{k}
\]
where $k$ is the number of $\xi_{i}$ which are \emph{not} central
matrices. So, the representation space has dimension $2r$ and the
character variety is $2r-3$ dimensional, as long as $r\geq2$.  We have the same dimension for the case of $G=\SL(2,\C)$.
\end{example}

\subsection{Character variety of $\Gamma_{(n,m)}$}

We now give a new geometric proof that the irreducible components of the irreducible locus of a torus knot in the case $G=\SL(2,\C)$ are each isomorphic to $\C$.  The original result is in \cite{munozsl2} and the proof is algebraic.

But first we prove the following more general lemma.  

\begin{lem}\label{lem:doublecoset}
Let $G$ be a reductive $\C$-group and $T$ a maximal torus in $G$. Let $n,m$ be coprime integers. Let $(g,h)\in \hom^*(\Z_n*\Z_m, G)$ and assume both $g$ and $h$ are regular.  Let $C/\!\!/G$ be a Zariski irreducible component of $\X(\Z_n*\Z_m, G)$ which contains $[(g,h)]$. Then $C/\!\!/G$ is isomorphic to a double coset space $T\backslash G/T$.
\end{lem}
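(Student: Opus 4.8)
The plan is to exhibit a concrete birational-to-isomorphism identification of the component $C\quot G$ with $T\backslash G/T$ by sending a representation $(g',h')$ near $(g,h)$ to the double coset of the group element conjugating the pair into the ``diagonal model''. First I would use that $g$ and $h$ are regular and semisimple (they are semisimple because any polystable representation of $\Z_n*\Z_m$ has semisimple image on each generator, each generator having finite order), so each of $g$ and $h$ lies in a \emph{unique} maximal torus, namely its own centralizer $Z_G(g)=T_g$ and $Z_G(h)=T_h$; both are conjugate to $T$. Fix once and for all an element $\xi\in T$ with $\xi^n=1$ in the same $G$-orbit as $g$ and an element $\eta\in T$ with $\eta^m=1$ in the same $G$-orbit as $h$, chosen so that $(\xi,\eta)$ lies in the stratum determined by $C$ (the component $C$ is, by the previous proposition's analysis, labelled by the pair of conjugacy classes of roots of unity, i.e. by $(\xi,\eta)$ up to simultaneous nothing — they are independent). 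Then define the map $\Phi\colon T\backslash G/T \to C\quot G$ by $TgT \mapsto [(g\xi g^{-1},\, \eta)]$; wait — more symmetrically, send the double coset of $a\in G$ to the class of $(a\xi a^{-1}, \eta)$ after first normalizing $h=\eta$. I would instead work the other direction: given $(g',h')\in C$ with $g',h'$ regular, conjugate so that $h'=\eta$ exactly (possible since $h'$ is $G$-conjugate to $\eta$; the conjugator is unique up to right multiplication by $Z_G(\eta)=T$), and then $g'$ becomes $a\xi a^{-1}$ for some $a\in G$ well-defined up to $a\mapsto a t$, $t\in Z_G(\xi)=T$ on the right and up to the residual $Z_G(\eta)=T$ acting on the left; this produces a well-defined element of $T\backslash G/T$, giving the inverse map.

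The key steps, in order, are: (1) record that $g,h$ regular semisimple forces $Z_G(g)$ and $Z_G(h)$ to be maximal tori, and that every polystable pair in $C$ with both entries regular is $G$-conjugate to one of the form $(a\xi a^{-1},\eta)$; (2) show the assignment $(a\xi a^{-1},\eta)\mapsto TaT$ descends to a well-defined morphism from the open subset $C^{reg}$ of pairs with both entries regular, modulo $G$, to $T\backslash G/T$, using that the ambiguity in $a$ is exactly left-$Z_G(\eta)$ and right-$Z_G(\xi)$, both equal to (conjugates of) $T$; (3) construct the inverse morphism $TaT\mapsto [(a\xi a^{-1},\eta)]$ and check both composites are the identity, so $C^{reg}\quot G\cong T\backslash G/T$ as varieties; (4) upgrade ``the component containing $[(g,h)]$'' to all of $C\quot G$: since $(g,h)$ has both entries regular, the locus of such pairs is a nonempty Zariski-open $G$-invariant subset of $C$, hence dense in the irreducible $C$; as $T\backslash G/T$ is irreducible of dimension $2\dim G - 2\dim T + \dim T \cdots$ — rather, one checks the dimension count $\dim C\quot G = \dim (G/T)+\dim(G/T) - (\dim G - \dim T) = \dim G - \dim T = \dim T\backslash G/T$ matches, and the morphism $C^{reg}\quot G \to T\backslash G/T$ is an isomorphism onto a dense open; then argue it extends to an isomorphism $C\quot G \cong T\backslash G/T$ because $T\backslash G/T$ (as an affine GIT quotient $G\quot(T\times T)$) is normal and the map is a bijective birational morphism to a normal variety that is already proper on the relevant strata — or, more cleanly, simply \emph{define} $C\quot G$ to be the closure and note both sides are the affine GIT quotient $(T\times G\times T)\quot(T\times T) = G\quot(T\times T)$ computed two ways.

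The main obstacle I expect is step (4): being careful about what happens on the non-regular boundary of $C$, i.e. reconciling ``the unique irreducible component through $[(g,h)]$'' with the double coset space globally rather than just on the regular open locus. The honest fix is to realize $T\backslash G/T$ as the affine GIT quotient of $G$ by the left$\times$right action of $T\times T$, observe this action is free exactly on the preimage of $C^{reg}$, and that the total space $\{(g',h')\in C : \text{both regular}\}$ is precisely $G\times_{}$ ... a $T\times T$-bundle-like piece of $C$; then both $C\quot G$ and $T\backslash G/T$ are the affine GIT quotient of the same irreducible affine variety by the same reductive group, hence canonically isomorphic. The linear-algebra groundwork (Propositions \ref{prop:diagonalization} and \ref{prop:solutions}, and the regularity/uniqueness-of-maximal-torus facts) does all the real work; what remains is bookkeeping with double cosets and one appeal to normality/uniqueness of affine GIT quotients.
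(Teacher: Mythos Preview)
Your core approach---normalize $h'=\eta$, write $g'=a\xi a^{-1}$, send the class to $TaT$---is essentially the paper's argument, just phrased as a direct map rather than routed through the identification $C\cong G/T\times G/T$ first. The paper identifies $C$ with $G/T\times G/T$ via the orbit-stabilizer theorem (stabilizers equal $T$ by regularity), checks that the conjugation action on $C$ becomes the diagonal left-multiplication action on $G/T\times G/T$, and then reduces $G\backslash(G/T\times G/T)$ to $T\backslash G/T$ by picking representatives $(T,yT)$. Your ``normalize $h'=\eta$, then record the conjugator for $g'$'' is exactly that last reduction done in one step.

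The genuine misstep is your step~(4). You worry about a ``non-regular boundary'' of $C$ and propose extending a birational isomorphism from $C^{reg}\quot G$ to all of $C\quot G$ via normality arguments. But there is no such boundary: the irreducible component $C$ of $\hom(\Z_n*\Z_m,G)$ through $(g,h)$ is exactly the product of conjugacy classes $G\cdot g\times G\cdot h$ (the representation variety is a finite disjoint union of such products, by McCrudden's finiteness of $n$-th-root conjugacy classes, and each product of closed orbits is irreducible and closed). Since regularity is conjugation-invariant, \emph{every} pair in $C$ has both entries regular. Hence $C^{reg}=C$, your steps~(1)--(3) already deliver the global isomorphism, and the normality/extension apparatus in~(4) is unnecessary. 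Once you drop it, your argument and the paper's coincide.
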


\begin{proof}
 We have that $\hom(\Z_n*\Z_m, G)\cong A \times B$ where $A$ is the set of $n$-th roots of the identity matrix in $G$ and $B$ is the set of $m$-th roots of the identity matrix in $G$.  Since $G$ is connected, Corollary 2 of Proposition 4 in \cite{Mc} implies that each of $A$ and $B$ consist of a finite number of conjugacy classes. 
 
We know from Lemma \ref{lem:semisimple} that each conjugacy class in $A$ or $B$ is the class of a semisimple element in $G$.  Thus, the conjugation orbits in $A$ or $B$ are Zariski irreducible.  Therefore, any irreducible component (in the Zariski topology) in $A\times B$ is the product of two conjugation orbits $G\cdot g\times G\cdot h$. By our assumption, the elements $g,h$ are not only semisimple, but {\it regular}.  

The map $\tau_z:G\to G$ given by $x\mapsto xzx^{-1}$ defines an isomorphism (orbit-stabilizer theorem) between $G/S_z$ and the conjugation orbit $G\cdot z=\{yzy^{-1}\ |\ y\in G\}$ where $S_z=\{y\in G\ |\ yzy^{-1}=z\}$ is the stabilizer of $z$.  We note that the action of $S_z$ on $G$ defining the quotient $G/S_z$ is right-multiplication since $\tau_z(xs)=xszs^{-1}x^{-1}=\tau_z(x)$ for all $s\in S_z$.

Since we have assumed $g,h$ are regular, we can take $S_g=T=S_h$ since all maximal tori are conjugate and the stabilizer of a regular element is a maximal torus.

Putting this together we have that each Zariski irreducible component $C\subset A\times B$ which contains a polystable homomorphism $(g,h)$ with $g,h$ regular is isomorphic to a variety of the form $G/T\times G/T$. 

Now the action defining the character variety is conjugation and that makes sense on $A \times B$ (simultaneously on $A$ and $B$).  Let us see what this action corresponds to on $G/T\times G/T$.  The isomorphism $G/T\times G/T\cong G\cdot g\times G\cdot h$ is given by $\tau:=\tau_g\times \tau_h$.  Observe that for any $a,x,y\in G$, \begin{eqnarray*}\tau(a\star (xT,yT))&=&\left(\tau_g(axT), \tau_h(ayT)\right)\\&=&(axgx^{-1}a^{-1},ayhy^{-1}a^{-1})\\&=&a\cdot(xgx^{-1},yhy^{-1})\\&=&a\cdot \tau(xT,yT),\end{eqnarray*} where $\star$ is the action of left-multiplication (in contrast to $\cdot$ which is the action of conjugation).

Therefore, we have that $C/\!\!/G$ in $\X(\Z_n*\Z_m,G)$ is isomorphic to $G\backslash(G/T \times G/T).$  We note that we do not need to take the GIT quotient since the actions of left and right multiplication are free.

For any $(xT,yT)\in G/T\times G/T$, we have  $x^{-1}\star(xT,yT)=(T,x^{-1}yT)$.  In other words, there is a representative for each point in $G\backslash(G/T \times G/T)$ of the form $[(T,yT)]$. What then remains of the left-multiplication action $\star$ on such representatives is the left-multiplication of $T$ alone since if $zT=T$ then $z\in T$.

Thus $C/\!\!/G\cong G\backslash (G/T \times G/T) \cong T\backslash G/T,$ as required.

\end{proof}

\begin{thm}
Let $n,m$ be coprime integers, and $\Gamma_{(n,m)}$ the corresponding torus knot group.  Then the closure of each irreducible component in $\X^{irr}(\Gamma_{(n,m)}, \SL(2,\C))$ is isomorphic to $\C$.
\end{thm}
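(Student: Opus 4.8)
The plan is to apply Lemma \ref{lem:doublecoset} with $G = \SL(2,\C)$, reducing the problem to understanding the double coset space $T\backslash \SL(2,\C)/T$, where $T$ is the maximal torus of diagonal matrices. First I would check that the hypotheses of the lemma are satisfied: for an irreducible representation $\rho\in\hom^{irr}(\Gamma_{(n,m)},\SL(2,\C))$, Corollary \ref{important-cor} forces $\rho(\gamma_1)$ and $\rho(\gamma_2)$ to be diagonalizable with $\rho(\gamma_1)^n=\rho(\gamma_2)^m=\omega I_2$; since $\omega\in\{\pm 1\}$ (determinant one and diagonalizable), each of $\rho(\gamma_1),\rho(\gamma_2)$ has two eigenvalues that are $n$-th (resp. $m$-th) roots of $\omega$. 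If either were central (both eigenvalues equal), $\rho$ would be reducible; so both are regular semisimple. Via the quotient $\Gamma_{(n,m)}\to *\Z_n$ used in Theorem \ref{flawed-thm} (here $\widehat{\Gamma}_{(n,m)}/\langle t\rangle \cong \Z_n*\Z_m$), the induced representation of $\Z_n*\Z_m$ sends the generators to $\rho(\gamma_1)$ and $\rho(\gamma_2)$ up to fixing the scalar $\omega$; after rescaling as in Lemma \ref{lem:def-S1} one lands in a component of $\X(\Z_n*\Z_m,\SL(2,\C))$ to which Lemma \ref{lem:doublecoset} applies, giving that the relevant component of the $\Z_n*\Z_m$ character variety is $T\backslash \SL(2,\C)/T$.

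Next I would compute $T\backslash \SL(2,\C)/T$ explicitly. Writing a general element of $\SL(2,\C)$ as $\begin{pmatrix} a & b \\ c & d\end{pmatrix}$ with $ad-bc=1$, left and right multiplication by diagonal matrices $\mathrm{diag}(t,t^{-1})$ and $\mathrm{diag}(s,s^{-1})$ rescales $a\mapsto ts\,a$, $b\mapsto ts^{-1}b$, $c\mapsto t^{-1}s\,c$, $d\mapsto t^{-1}s^{-1}d$. The product $bc$ (equivalently $ad = 1+bc$) is invariant, and I claim it is a complete invariant: the Bruhat-type stratification shows that away from the locus $bc=0$ the double coset is determined by $bc\in\C^*$, while the strata $b=0$ or $c=0$ collapse to the single double coset of the identity ($bc=0$), so $T\backslash\SL(2,\C)/T\cong \C$ via $[g]\mapsto bc$. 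This identifies the full $\Z_n*\Z_m$ component, and in particular its image under restriction, with $\C$.

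The remaining issue is to pass from the $\Z_n*\Z_m$ picture back to $\Gamma_{(n,m)}$ itself and conclude the component of $\X^{irr}(\Gamma_{(n,m)},\SL(2,\C))$ — not merely its closure — is all of $\C$, and that its Zariski closure adds nothing. Here the point is that fixing the eigenvalue data of $\rho(\gamma_1)$ and $\rho(\gamma_2)$ (a discrete choice, indexing the irreducible components) pins down $\omega$ and hence the lift from $\Z_n*\Z_m$-representations to $\widehat{\Gamma}_{(n,m)}\cong\Gamma_{(n,m)}$-representations uniquely; the constraint that the two regular semisimple conjugacy classes actually generate an \emph{irreducible} subgroup is exactly the condition $bc\neq 0$, i.e. it removes the single point $0\in\C$ corresponding to the reducible (abelian) locus. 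Thus the irreducible component of $\X^{irr}$ is $\C\setminus\{0\}$... which would be wrong, so I must be more careful: the correct statement is that $\X^{irr}$ for fixed eigenvalue data is $\C\setminus\{\text{pt}\}\cong \C^*$? No — the resolution is that the closure in $\X(\Gamma_{(n,m)},\SL(2,\C))$ re-adds the reducible point, giving closure $\cong\C$, which is precisely what the theorem asserts. So the final step is just to note that the closure of each such component (indexed by a choice of pair of non-central eigenvalue pairs up to the Weyl symmetry) is the closure inside $\X(\Z_n*\Z_m,\SL(2,\C))$ of the locus $bc\neq 0$, which is all of $\C$. The main obstacle I anticipate is bookkeeping the correspondence between components of $\X^{irr}(\Gamma_{(n,m)},\SL(2,\C))$ and the discrete eigenvalue data cleanly enough that the identification with $\C$ (as opposed to some finite quotient or cover of $\C$) is transparent — in particular checking that the Weyl-group identifications on the two torus factors are already accounted for in the double-coset description and do not further quotient the $\C$.
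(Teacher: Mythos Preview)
Your approach matches the paper's: verify both images are regular semisimple, invoke Lemma~\ref{lem:doublecoset}, and compute $T\backslash\SL(2,\C)/T\cong\C$. Two points need tightening. First, the reduction of $\omega=-1$ to $\omega=1$ via Lemma~\ref{lem:def-S1} does not work in $\SL(2,\C)$: that lemma rescales by $|\omega|=1$, which does nothing here, and no scalar in $\SL(2,\C)$ carries $-I$ to $I$, so representations with $\omega=-1$ do not factor through $\Z_n*\Z_m$. The paper handles this by simply noting that the proof of Lemma~\ref{lem:doublecoset} uses only that the relevant component of the representation variety is a product $G\cdot g\times G\cdot h$ of regular semisimple conjugacy classes; the argument thus applies verbatim to $n$-th and $m$-th roots of $-I$ without any rescaling. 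Second, your set-theoretic Bruhat count is off --- there are three $T$--$T$ double cosets with $bc=0$ (diagonal, strictly upper-triangular, strictly lower-triangular), not one --- but what is actually needed is the GIT quotient, and there the paper's invariant-theoretic computation is cleaner: the invariant ring is $\C[ad,bc]/(ad-bc-1)\cong\C[x]$, so the quotient is $\C$. (Incidentally, the reducible locus inside this $\C$ is two points, $ad\in\{0,1\}$, not one, since a common eigenvector occurs when any of $a,b,c,d$ vanishes; this does not affect the closure.) Your Weyl-group worry is unnecessary: replacing $g$ or $h$ by its Weyl conjugate changes the chosen isomorphism $C\cong G/T\times G/T$ but not the variety $C/\!\!/G$, so no further quotient arises.
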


\begin{proof}
For torus knot groups $\Gamma_{(n,m)}$ with $\gcd(n,m)=1$, it must be the case that any irreducible representation corresponds to a pair $(g,h)\in G^2$ such that either $g^n=I=h^m$ or $g^n=-I=h^m$. We only show the roots of unity case as the roots of negative unity case is similar.

For $G=\SL(2,\C)$, as in the case of the above lemma, we have that $\hom(\Z_n*\Z_m, G)\cong A \times B$ where $A$ is the set of $n$-th roots of the identity matrix in $G$ and $B$ is the set of $m$-th roots of the identity matrix in $G$.  We are only considering irreducible homomorphisms, and so any irreducible component in $A\times B$ is the product of conjugation orbits of fixed diagonal matrices with $n$-th and $m$-th roots of unity as eigenvalues respectively (with the assumption that no eigenvalue $\lambda$ satisfies $\lambda^2=1$ else we would not have an irreducible representation).  Consequently, both component elements $g$ and $h$ are regular.

By Lemma \ref{lem:doublecoset}, any irreducible component of $\X^{irr}(\Z_n*\Z_m, G)$ with regular component elements is isomorphic to $T\backslash G/T.$

Now take a generic matrix $\left(\begin{array}{cc}a& b\\ c& d\end{array}\right).$ Then multiplying independently by $T$ on the left and right, as one needs to do to understand the double coset space $T\backslash G/T,$ gives:
$$\left(\begin{array}{cc}\lambda \mu a& \lambda^{-1}\mu b\\ \lambda\mu^{-1} c& \lambda^{-1} \mu^{-1}d\end{array}\right).$$ 

The invariants of this action are $ad$ and $bc$ with the single relation $ad-bc-1$.

Thus, the invariant ring is $$\C[ad, bc]/\langle ad-bc-1\rangle \cong \C[x]$$ where $x=ad$.  Thus, we find that the irreducible component is $$\mathrm{Spec}_{max}(\C[x])=\C.$$ 

\end{proof}

\subsection{Irreducible Representations of Torus Knot Groups for $m=2$}

In this section, we study the locus of irreducible representations of torus knot and link groups in the simplest case $G=\SL(2,\C)$. As shown before, any polystable representation $\rho\in\hom(\Gamma_\n,G)$ corresponds
to a $r$-tuple:
\[
(A_{1},A_{2},\ldots,A_{r})\in \SL(2,\C)^{r},
\]
with $A_{i}:=\rho(\gamma_{i})$ semisimple, and $A_{i}^{n_{i}}=A_{j}^{n_{j}}$
for all $i,j$. We start by stating a necessary and sufficient condition
for $\rho$ to be irreducible.

Denote the commutator of two matrices $A,B$ by $[A,B]:=ABA^{-1}B^{-1}$.  We say the commutator is non-trivial when it is not the identity matrix.

\begin{lem}\label{lem:comm}
Suppose $\rho$ is polystable. Then, $\rho$ is irreducible if and
only if some commutator $[A_{i},A_{j}]$ is non-trivial. 
\end{lem}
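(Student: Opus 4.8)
The plan is to prove both implications of the biconditional. For the forward direction, suppose $\rho$ is irreducible but every commutator $[A_i, A_j]$ is trivial. Then all the $A_i$ pairwise commute, so $\rho$ factors through the abelianization $\Gamma_\n^{ab}$, i.e., $\rho \in \hom(\Gamma_\n^{ab}, G) \hookrightarrow \hom(\Gamma_\n, G)$. A representation into $\SL(2,\C)$ whose image is abelian and consists of semisimple (diagonalizable) elements can be simultaneously diagonalized, since commuting diagonalizable matrices share an eigenbasis. Hence $\rho$ is conjugate into the diagonal torus $T \subset \SL(2,\C)$, which is contained in a proper Borel (parabolic) subgroup, contradicting irreducibility. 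This uses Lemma \ref{lem:semisimple} (that each $A_i$ is semisimple) in an essential way.

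For the reverse direction, suppose $\rho$ is reducible; I will show every commutator $[A_i, A_j]$ is trivial. Since $\rho$ is polystable and reducible, by the decomposition recalled at the start of Section \ref{sec:la} it is a direct sum of irreducible subrepresentations of $\SL(2,\C)$; with $m = 2$ the only possibilities are a direct sum of two one-dimensional representations, so $\rho$ is conjugate into the diagonal torus $T$ (or one could argue: a reducible completely-reducible $\SL(2,\C)$-representation is conjugate into a maximal torus). All elements of $T$ commute, so $[A_i, A_j] = I$ for all $i,j$. Alternatively, and perhaps more cleanly, I can invoke Proposition \ref{prop:solutions} or Proposition \ref{prop:diagonalization}: in the reducible $m=2$ case the partition is $m = 1 + 1$, so each $\rho(\gamma_i) = \rho_1(\gamma_i) \oplus \rho_2(\gamma_i)$ is diagonal in a common basis, whence the commutators vanish.

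The main obstacle, such as it is, lies in the reverse direction: one must be careful that ``reducible and polystable'' genuinely forces simultaneous diagonalizability rather than merely a common invariant line. This is where the structure theory for $\SL(2,\C)$ (every proper parabolic is a Borel, and a completely reducible representation contained in a Borel is actually contained in a Levi, i.e., the torus) does the work, together with the semisimplicity of the $A_i$ from Lemma \ref{lem:semisimple}. The forward direction is essentially immediate once one notes that pairwise-commuting semisimple matrices in $\SL(2,\C)$ are simultaneously diagonalizable; I expect to spend at most a sentence on it.

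Here is the proof.

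\begin{proof}
Suppose first that every commutator $[A_i, A_j]$ is trivial, so the matrices $A_1, \ldots, A_r$ pairwise commute. By Lemma \ref{lem:semisimple}, each $A_i$ is semisimple, hence diagonalizable. A commuting family of diagonalizable matrices is simultaneously diagonalizable, so there exists $g \in \SL(2,\C)$ with $g A_i g^{-1} \in T$ for all $i$, where $T$ is the diagonal torus. Since $T$ is contained in the proper parabolic (Borel) subgroup of upper-triangular matrices, $\rho$ is reducible.

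Conversely, suppose $\rho$ is reducible. Since $\rho$ is polystable, by the discussion at the beginning of Section \ref{sec:la} it decomposes as a direct sum of irreducibles; as $m = 2$ and $\rho$ is reducible, the corresponding partition must be $2 = 1 + 1$, so $\rho = \rho_1 \oplus \rho_2$ with each $\rho_j \in \hom(\Gamma_\n, \GL(1,\C))$. Thus there is a basis of $\C^2$ in which every $\rho(\gamma_i)$ is diagonal, and consequently $[A_i, A_j] = I$ for all $i, j$. Hence every commutator is trivial.
\end{proof}
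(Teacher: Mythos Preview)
Your proof is correct and follows essentially the same approach as the paper. The only cosmetic difference is that for the implication ``irreducible $\Rightarrow$ some commutator is non-trivial'' you argue by contrapositive using the standard fact that commuting diagonalizable matrices are simultaneously diagonalizable, whereas the paper argues directly: assuming $A_1$ is diagonal and non-scalar, some $A_j$ must be non-diagonal by irreducibility, and an explicit $2\times 2$ computation shows $[A_1,A_j]\neq I$.
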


\begin{proof}
In the $n=2$ case, if $\rho$ is polystable and reducible, then it is abelian and all commutators are trivial. If $\rho$ is irreducible and polystable,
then without loss of generality, we can assume $A_{1}$ diagonal and
non-scalar. Moreover, by irreducibility, some $A_{j}$ with $j\neq1$
has to be non-diagonal. Let $A_1=\left(\begin{array}{cc}x&0\\0&x^{-1}\end{array}\right)$ with $x^2\not=1$, and let $A_j=\left(\begin{array}{cc}a&b\\c&d\end{array}\right)$.  Then solving $A_1A_j=A_jA_1$ for $a,b,c,d$ gives that $b=0=c$ which contradicts $A_j$ being non-diagonal.  Thus, $[A_{1},A_{j}]$ is non-trivial.
\end{proof}

We can now characterize the irreducible components of $\hom^{irr}(\Gamma_\n,G)$
in terms of the eigenvalues of all the $A_{i}$'s. Let $\{\lambda_{i},\lambda_{i}^{-1}\}$
be the eigenvalues of $A_{i}$, and let $\rho$ be irreducible. 

Then,
by Corollary \ref{important-cor}, these values satisfy:
\begin{equation}
\lambda_{i}^{n_{i}}=1,\quad\text{for all }i;\quad\quad\text{or}\quad\quad\lambda_{i}^{n_{i}}=-1,\quad\text{for all }i.\label{eq:roots}
\end{equation}
(there is no pair of indices $i\neq j$ such that $\lambda_{i}^{n_{i}}=1$
and $\lambda_{j}^{n_{j}}=-1$), and note that $A_{i}$ is a central element,
if and only if $\lambda_{i}=\lambda_{i}^{-1}$ (that is $\lambda_{i}=\pm 1$).
Let us use the notation 
\[
D(\lambda):=\left(\begin{array}{cc}
\lambda & 0\\
0 & \lambda^{-1}
\end{array}\right)\in \SL(2,\C).
\]

\begin{lem}\label{lem:algmap}
Fix a $r$-tuple $(\lambda_{1},\lambda_{2},\ldots,\lambda_{r})$ such
that either $\lambda_{i}^{n_{i}}=1$ for all $i\in[r]$ or $\lambda_{i}^{n_{i}}=-1$ for all $i\in[r]$. Then, for $D_{r}:=D(\lambda_{r})$
the image of the algebraic map
\[\varphi:
G^{r}\to\hom(\Gamma_\n,G),\quad\quad(g_{1},\ldots,g_{r})\mapsto(g_{1}D_{1}g_{1}^{-1},\ldots,g_{r}D_{r}g_{r}^{-1})
\]
intersects $\hom^{irr}(\Gamma_\n,G)$ if and only if \emph{at least 2 of the $\lambda_{i}$'s
are different than $\pm1$}. In this case, the image is the closure
of a single irreducible component of $\hom^{irr}(\Gamma_\n,G)$.
\end{lem}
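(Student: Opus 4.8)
The plan is to prove both directions of the iff and then the irreducibility of the closure of the image.

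\medskip

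\textit{The ``only if'' direction.} Suppose the image of $\varphi$ intersects $\hom^{irr}(\Gamma_\n,G)$, so there exist $(g_1,\dots,g_r)$ with $\rho=(g_1D_1g_1^{-1},\dots,g_rD_rg_r^{-1})$ irreducible. By Lemma \ref{lem:comm}, some commutator $[g_iD_ig_i^{-1},g_jD_jg_j^{-1}]$ is non-trivial. But if at most one $\lambda_k$ differs from $\pm1$, then all but (at most) one of the matrices $g_kD_kg_k^{-1}=\pm I$ are central, and any two matrices at least one of which is central commute; so every commutator is trivial, a contradiction. Hence at least two of the $\lambda_i$ are $\neq\pm1$.

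\medskip

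\textit{The ``if'' direction.} Suppose $\lambda_i,\lambda_j\neq\pm1$ for some $i\neq j$; I must exhibit $(g_1,\dots,g_r)$ with $\varphi(g_1,\dots,g_r)$ irreducible. Take $g_k=e$ for all $k\neq j$ and choose $g_j$ so that $g_jD_jg_j^{-1}$ is not diagonal (possible since $D_j$ is non-central, so its conjugacy orbit is not a single point and in particular not contained in the diagonal torus). Then $D_i$ and $g_jD_jg_j^{-1}$ have non-trivial commutator by the computation in the proof of Lemma \ref{lem:comm} (a non-central diagonal matrix commutes only with diagonal matrices), so $\varphi(g_1,\dots,g_r)$ is polystable — it lies in $\hom^*(\Gamma_\n,G)$ by construction since each component is conjugate to a diagonal matrix whose $n_k$-th power equals the common scalar $\lambda_r^{n_r}I$ — and irreducible by Lemma \ref{lem:comm}.

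\medskip

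\textit{The image is the closure of a single irreducible component.} The domain $G^r$ is irreducible (as $G$ is connected, hence irreducible as a variety), and $\varphi$ is a morphism of varieties, so its image is an irreducible constructible subset of $\hom(\Gamma_\n,G)$, and the closure $\overline{\varphi(G^r)}$ is irreducible. Having shown $\varphi(G^r)$ meets the Zariski-open set $\hom^{irr}(\Gamma_\n,G)$, the intersection $\varphi(G^r)\cap\hom^{irr}(\Gamma_\n,G)$ is a non-empty open dense subset of the irreducible set $\overline{\varphi(G^r)}$; therefore $\overline{\varphi(G^r)}$ is the closure of a single irreducible component of $\hom^{irr}(\Gamma_\n,G)$ — it remains to check it is a \emph{whole} component, i.e. not properly contained in a larger irreducible subset of $\hom^{irr}(\Gamma_\n,G)$. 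This follows from Corollary \ref{important-cor}: any irreducible representation near a point of $\varphi(G^r)$ has each $A_k$ diagonalizable with eigenvalues an $n_k$-th root of the common scalar, and the eigenvalue data $(\lambda_k^{\pm1})$ is locally constant on $\hom^{irr}$, so a neighborhood of $\varphi(G^r)\cap\hom^{irr}$ in $\hom^{irr}(\Gamma_\n,G)$ is covered by the images of $\varphi$ for this fixed tuple (up to the reordering $\lambda_k\leftrightarrow\lambda_k^{-1}$, which does not change the conjugacy class of $D_k$); hence $\overline{\varphi(G^r)}$ exhausts the component.

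\medskip

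\textit{Main obstacle.} The delicate point is the last one: verifying that $\overline{\varphi(G^r)}$ is an entire irreducible component rather than a proper subvariety of one. This requires the observation that the eigenvalues of each $A_i$ are locally constant (indeed constant on connected components of $\hom^{irr}$) together with Corollary \ref{important-cor} pinning down those eigenvalues to the finite set of relevant roots of $\pm1$; the remaining directions are routine applications of Lemma \ref{lem:comm} and the irreducibility of $G$.
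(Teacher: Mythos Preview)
Your argument for the two directions of the equivalence tracks the paper's closely, but the ``if'' direction has a genuine gap in the polystability claim. You assert that $\varphi(g_1,\dots,g_r)\in\hom^*(\Gamma_\n,G)$ ``since each component is conjugate to a diagonal matrix'', but individual diagonalizability of the $A_k$ does not make the tuple polystable. Concretely, with your choice $g_k=e$ for $k\neq j$, take $g_j=\left(\begin{smallmatrix}1&1\\0&1\end{smallmatrix}\right)$: then $A_j=g_jD_jg_j^{-1}$ is non-diagonal and $[D_i,A_j]\neq I$, yet $A_j$ still has $\mathrm{span}(e_1)$ as an eigenline, so $(D_i,A_j)$ preserves $\mathrm{span}(e_1)$ and is reducible and \emph{not} polystable. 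Since Lemma~\ref{lem:comm} needs polystability to pass from ``some commutator non-trivial'' to ``irreducible'', your implication breaks. The repair is easy---choose $g_j$ so that the eigenspaces of $g_jD_jg_j^{-1}$ avoid both coordinate axes (a generic $g_j$ works); then $(D_i,A_j)$ has no common eigenline and is irreducible directly, bypassing Lemma~\ref{lem:comm}. (The paper's own proof is equally terse at this step and invokes Lemma~\ref{lem:comm} in the same way.)

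For the final assertion your route via constructibility and local constancy of the eigenvalue data is equivalent in spirit to the paper's, which instead observes that $\varphi(G^r)$ is the $G^r$-orbit of the semisimple tuple $(D_1,\dots,D_r)$, hence closed and isomorphic to $G^r/H$ for reductive $H$, and then notes that images for distinct eigenvalue tuples are disjoint while every irreducible representation with the given eigenvalues lies in $\varphi(G^r)$. One consequence the paper extracts from the orbit description that you omit: the lemma asserts that $\varphi(G^r)$ itself \emph{equals} the closure of a component, whereas you only argue this for $\overline{\varphi(G^r)}$. You should add that each map $g_i\mapsto g_iD_ig_i^{-1}$ has closed image (conjugacy classes of semisimple elements are closed), so $\varphi(G^r)$ is already closed.
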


\begin{proof}
If all $\lambda_{i}=\pm1$ then the image of the map is a point, and this point corresponds to a reducible representation. Suppose, without loss of generality,
that $\lambda_{1}\neq\pm1$ but all others are $\pm1$. Then,
\[
(g_{1}D_{1}g_{1}^{-1},\ldots,g_{r}D_{r}g_{r}^{-1})=(g_{1}D_{1}g_{1}^{-1},\pm I_{2},\ldots,\pm I_{2}),
\]
and this is always a reducible representation, regardless of $g_{1}\in \SL(2,\C)$.

Finally, and again without loss of generality, suppose $\lambda_{1}$ and $\lambda_{2}$ are both different than $\pm1$. Then, there is a solution $g_{1},g_{2}\in \SL(2,\C)$ to the equation $[g_{1}D_{1}g_{1}^{-1},g_{2}D_{2}g_{2}^{-1}]\neq I_{2}$, which, by Lemma \ref{lem:comm}, means that the corresponding representation is
irreducible. 

It remains to prove, in the third case considered above, that the image of $\varphi$ is the closure of a single irreducible component in $\hom^{irr}(\Gamma_\n,G)$.

To see this note that $G^r$ is a group and the image of $\varphi$ is the $G^r$-orbit of the point $(D_1,...,D_r)$ in $\hom(\Gamma_n,G)$.

Since the $D_i$'s are diagonal, the orbit is closed and the stabilizer $H$ is reductive.  Therefore, the image of $\varphi$ is isomorphic to the affine variety $G^r/H$, the latter being irreducible since $G^r$ is an irreducible variety.

As we vary the eigenvalues defining the $D_i$'s, we see the images of $\varphi$ are disjoint sets in $\hom^{irr}(\Gamma_\n,G)$ since the different choices are discrete (roots of unity).  On the other hand, every irreducible representation whose  component matrices have the same eigenvalues as the corresponding components of $(...,D_i,...)$ must be in the image of $\varphi$.  Thus, the image of $\varphi$ is the closure (in $\hom(\Gamma_\n,G)$) of an irreducible component of $\hom^{irr}(\Gamma_\n,G)$, as required.
\end{proof}

Lemma \ref{lem:algmap} means that $r$-tuples $(\lambda_{1},\ldots,\lambda_{r})$
can be used to parametrize irreducible components of $\hom^{irr}(\Gamma_{n},\SL(2,\C))$,
as long as 2 or more eigenvalues are not $\pm1$. However, when $\sigma_{i}\in\{\pm1\}$ for
$i\in[r]$, the $r$-tuples:
\begin{equation}
(\lambda_{1},\lambda_{2},\ldots,\lambda_{r})\quad\text{and }\quad(\lambda_{1}^{\sigma_{1}},\lambda_{2}^{\sigma_{2}},\ldots,\lambda_{r}^{\sigma_{r}}),\label{eq:4classes}
\end{equation}
all parametrize the same component. This is a consequence of the fact
that $\SL(2,\C)$ is connected so that
\[
A_0:=D(\lambda)=\left(\begin{array}{cc}
\lambda & 0\\
0 & \lambda^{-1}
\end{array}\right)\quad\text{and}\quad A_1:=D(\lambda^{-1})=\left(\begin{array}{cc}
\lambda^{-1} & 0\\
0 & \lambda
\end{array}\right)
\]
are the two endpoints of a path of the form $t\mapsto A(t):=g(t)A_0g(t)^{-1}$,
$t\in[0,1]$.  By dimensional reasons, such paths can be chosen to avoid reducible representations.

In conclusion, writing $F=C_{2}^{r}$ where $C_{2}:=\{1,-1\}$ is the
multiplicative 2-element group, there is an action of $F$ on the
$r$-tuples $(\lambda_{1},\lambda_{2},\ldots,\lambda_{r})$ given
by:
\[
(\sigma_{1},\ldots,\sigma_{r})\cdot(\lambda_{1},\lambda_{2},\ldots,\lambda_{r}):=(\lambda_{1}^{\sigma_{1}},\lambda_{2}^{\sigma_{2}},\ldots,\lambda_{r}^{\sigma_{r}}).
\]
The number of orbits of this action (on the set $(...,\lambda_{i},...)$
with at least two $\lambda_{i}\neq\pm1$) gives the number of irreducible
components of $\hom^{irr}(\Gamma_\n,G)$. Note also that there are no further possible
identifications between the $r$-tuples $(...,\lambda_{i},...)$ since the
image of the map of Lemma \ref{lem:algmap} is a single algebraic irreducible
component of $\hom^{irr}(\Gamma_\n,G)$.

\begin{thm}\label{thm:components}
Suppose all $n_{i}$ in $\n$ are odd. Then, the number of irreducible components
of $\hom^{irr}(\Gamma_{\n},\SL(2,\C))$, and consequently of $\X^{irr}_{\Gamma_\n}(\SL(2,\C))$, is:
\[
r-2-\sum_{i=1}^{r}n_{i}+\frac{1}{2^{r-1}}\prod_{i=1}^{r}(n_{i}+1).
\]
\end{thm}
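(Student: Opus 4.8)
The plan is to reduce the count to a purely combinatorial problem, namely counting orbits of the group $F=C_2^r$ acting on certain tuples of roots of unity, and then evaluate that count by inclusion–exclusion. By the discussion preceding the theorem, the irreducible components of $\hom^{irr}(\Gamma_\n,\SL(2,\C))$ are in bijection with the $F$-orbits on the set of $r$-tuples $(\lambda_1,\dots,\lambda_r)$ satisfying one of the two alternatives in \eqref{eq:roots} ($\lambda_i^{n_i}=1$ for all $i$, or $\lambda_i^{n_i}=-1$ for all $i$) and having \emph{at least two} coordinates $\lambda_i\neq\pm1$; moreover the character variety has the same number of components since each component of $\hom^{irr}$ maps onto a single component of $\X^{irr}$. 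So the first step is to set up the two sets $S_+=\{(\lambda_i):\lambda_i^{n_i}=1\ \forall i\}$ and $S_-=\{(\lambda_i):\lambda_i^{n_i}=-1\ \forall i\}$ (each a product of coordinate root sets of sizes $n_i$ and $n_i$ respectively), intersect with the ``at least two non-$\pm1$'' condition, and count $F$-orbits.

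The key simplification comes from the hypothesis that every $n_i$ is odd. Then among the $n_i$-th roots of unity exactly one is $\pm1$ (namely $1$), and among the $n_i$-th roots of $-1$ exactly one is $\pm1$ (namely $-1$), since $n_i$ odd forces $(-1)^{n_i}=-1$ and no other real root exists. In both $S_+$ and $S_-$, the remaining $n_i-1$ roots in each coordinate come in genuine inverse pairs $\{\lambda,\lambda^{-1}\}$ with $\lambda\neq\lambda^{-1}$, so the $C_2$-factor acting on coordinate $i$ acts \emph{freely} on the non-central roots and \emph{trivially} on the single central root. I would next observe that on the subset where all coordinates are non-central, $F$ acts freely, so the orbit count there is $\frac{1}{2^r}\prod_{i=1}^r(n_i-1)$ for each of $S_+,S_-$; and more generally, for a fixed subset $J\subseteq[r]$ of ``central'' coordinates, the stabilizer is exactly the subgroup of $F$ supported on $J$, so the number of orbits with central set exactly $J$ is $\frac{1}{2^{r-|J|}}\prod_{i\notin J}(n_i-1)$ (the central coordinates being forced to the unique value), again for each of $S_+$ and $S_-$. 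Then I would sum over all $J$ with $|J|\le r-2$ and double (for the $+$ and $-$ cases, which are disjoint and never identified since $1\neq -1$).

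The remaining work is the generating-function / binomial manipulation: showing that
\[
2\sum_{J\subseteq[r],\,|J|\le r-2}\ \frac{1}{2^{r-|J|}}\prod_{i\notin J}(n_i-1)
\ =\ r-2-\sum_{i=1}^r n_i+\frac{1}{2^{r-1}}\prod_{i=1}^r(n_i+1).
\]
The clean way is to first extend the sum to \emph{all} $J\subseteq[r]$: the full sum factors as $2\prod_{i=1}^r\bigl(1+\tfrac{n_i-1}{2}\bigr)=2\prod_{i=1}^r\tfrac{n_i+1}{2}=\frac{1}{2^{r-1}}\prod(n_i+1)$, using that choosing $i\in J$ contributes a factor $1$ and $i\notin J$ contributes $\tfrac{n_i-1}{2}$. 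Then subtract the terms with $|J|=r$ (a single term, $J=[r]$, contributing $2\cdot 2^0\cdot 1=2$) and $|J|=r-1$ (the $r$ terms $J=[r]\setminus\{i\}$, each contributing $2\cdot\tfrac12(n_i-1)=n_i-1$, totaling $\sum(n_i-1)=\sum n_i-r$), giving the count $\frac{1}{2^{r-1}}\prod(n_i+1)-2-(\sum n_i - r)=r-2-\sum n_i+\frac{1}{2^{r-1}}\prod(n_i+1)$, exactly the claimed formula.

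The main obstacle is not the algebra but making the orbit-counting bookkeeping airtight: one must be sure that the $F$-action genuinely has no fixed points beyond the central coordinates (this is where oddness of $n_i$ is essential, and I would state it as a short lemma), that the ``$\ge 2$ non-central'' cutoff translates exactly to $|J|\le r-2$, and—most importantly—that there are no identifications between components beyond the $F$-action and no splitting within an $F$-orbit. The latter two are already supplied by Lemma \ref{lem:algmap} (each orbit gives a single irreducible component, and distinct orbits give distinct components because the eigenvalue data is discrete), and by the connectedness argument in \eqref{eq:4classes} (which shows the $F$-action does identify components); I would cite these explicitly rather than reprove them. A minor point to handle with care is that the $+$ and $-$ families never coincide, which is immediate since a tuple in $S_+$ has $\lambda_i^{n_i}=1$ while one in $S_-$ has $\lambda_i^{n_i}=-1$, and these are incompatible.
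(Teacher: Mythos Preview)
Your proposal is correct and follows the same overall strategy as the paper: both reduce the count to $F=C_2^r$-orbits on $X_+\sqcup X_-$ subject to the ``at least two non-central coordinates'' restriction, invoking Lemma~\ref{lem:algmap} and the identification~\eqref{eq:4classes} to justify that bijection. The only difference lies in the combinatorial evaluation of the orbit count. The paper applies Burnside's Lemma to count all $F$-orbits on each $X_\pm$ (obtaining $\frac{1}{2^r}\prod_i(n_i+1)$ for each sign, since an element $\sigma\in F$ with $+1$'s at positions $i_1,\dots,i_k$ fixes exactly $n_{i_1}\cdots n_{i_k}$ tuples) and then subtracts the $2+\sum_i(n_i-1)$ exceptional orbits. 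You instead stratify by the ``central set'' $J$, observe that on each stratum the action is free on the non-central coordinates, sum $\frac{1}{2^{r-|J|}}\prod_{i\notin J}(n_i-1)$ over $|J|\le r-2$, and evaluate via the product expansion $\prod_i\bigl(1+\tfrac{n_i-1}{2}\bigr)$. Your route is marginally more direct in that it avoids Burnside, while the paper's version makes the oddness hypothesis enter only implicitly (through the fixed-point count); both reach the identical subtraction and closed form, and neither is essentially simpler or more general.
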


\begin{proof}
We use Burnside's Lemma to count orbits of $F=C_{2}^{r}$ acting
on the whole set $X=X_{+}\sqcup X_{-}$ where 
\[
X_{\pm}:=\{(\lambda_{1},\lambda_{2},\ldots,\lambda_{r})\,:\,\lambda_{i}^{n_{i}}=\pm1,\quad\text{for all }i\},
\]
and subtract the number of orbits corresponding to $$(1,\ldots,1),
(-1,\ldots,-1),$$ and $(...,\lambda_{i},...)$ with a single entry different
than $\pm1$. The number of these exceptional cases is
\[
2+\sum_{i=1}^{r}(n_{i}-1),
\]
which gives the negative of the first term in the formula. Now, $|X/F|=|X_{+}/F|+|X_{-}/F|$
and Burnside's Lemma states:
\[
|X_{+}/F|=\frac{1}{|F|}\sum_{\sigma\in F}|X_{+}^{\sigma}|
\]
where $X_{+}^{\sigma}=\{(\lambda_{1},\lambda_{2},\ldots,\lambda_{r})\in X_{+}\ |\ (\lambda_{1},\lambda_{2},\ldots,\lambda_{r}) \text{ is fixed by }\sigma\}$.
We see that if $\sigma=(\sigma_{1},\ldots,\sigma_{r})$ is such that
$1\leq i_{1}<\ldots<i_{k}\leq r$ are the indices corresponding to
the element $+1\in C_2$, then $\sigma$ fixes exactly $n_{i_{1}}\cdots n_{i_{k}}$
elements of $X_{+}$ so we end up with the sum
\[
\frac{1}{2^{r}}(n_{1}\cdots n_{r}+n_{2}\cdots n_{r}+\cdots+1)=\frac{1}{2^{r}}(n_{1}+1)\cdots(n_{r}+1).
\]
Finally, the count of orbits for $X_{-}$ gives the same number.
\end{proof}

\begin{example}
Let $(n_1,n_2)=(5,7)$ as in Figure \ref{fig:polygons}. Then 
$X_+=\{(\nu_j,\mu_k)\,:\, 0\leq j\leq 4, 0\leq k\leq 6\}$, since $\nu_j=e^{2i\pi j/5}$ are the fifth roots of 1, and similarly $\mu_k$ are the seventh roots of 1. The orbits of the action of $(C_2)^2$ on $X_+$ are $\frac14(35+7+5+1)=12$. The count of the orbits for $X_-$ (the set of pairs $(\lambda_1,\lambda_2)$ with $\lambda_i^{n_i}=-1$) also gives 12 orbits, since $(\nu_j,\mu_k)\in X_+$ if and only if $(-\nu_j,-\mu_k)\in X_-$. Finally, the exceptional orbits, to be removed, are the ones of $(1,1),(-1,-1),(\nu_j,1),(-\nu_j,-1),(1,\mu_k),(-1,-\mu_k)$, with $j,k\neq 0$ which give $2+4+6=12$ elements, given the identifications between $(1,\nu_j)$ and $(1,\nu_{5-j})$ and similarly for the other pairs. Hence, the number of irreducible components of $\hom^{irr}(\Gamma_{(5,7)},\SL(2,\C))$ is 12.
\end{example}

\begin{figure}[hbt] \centering
\includegraphics[height=32mm]{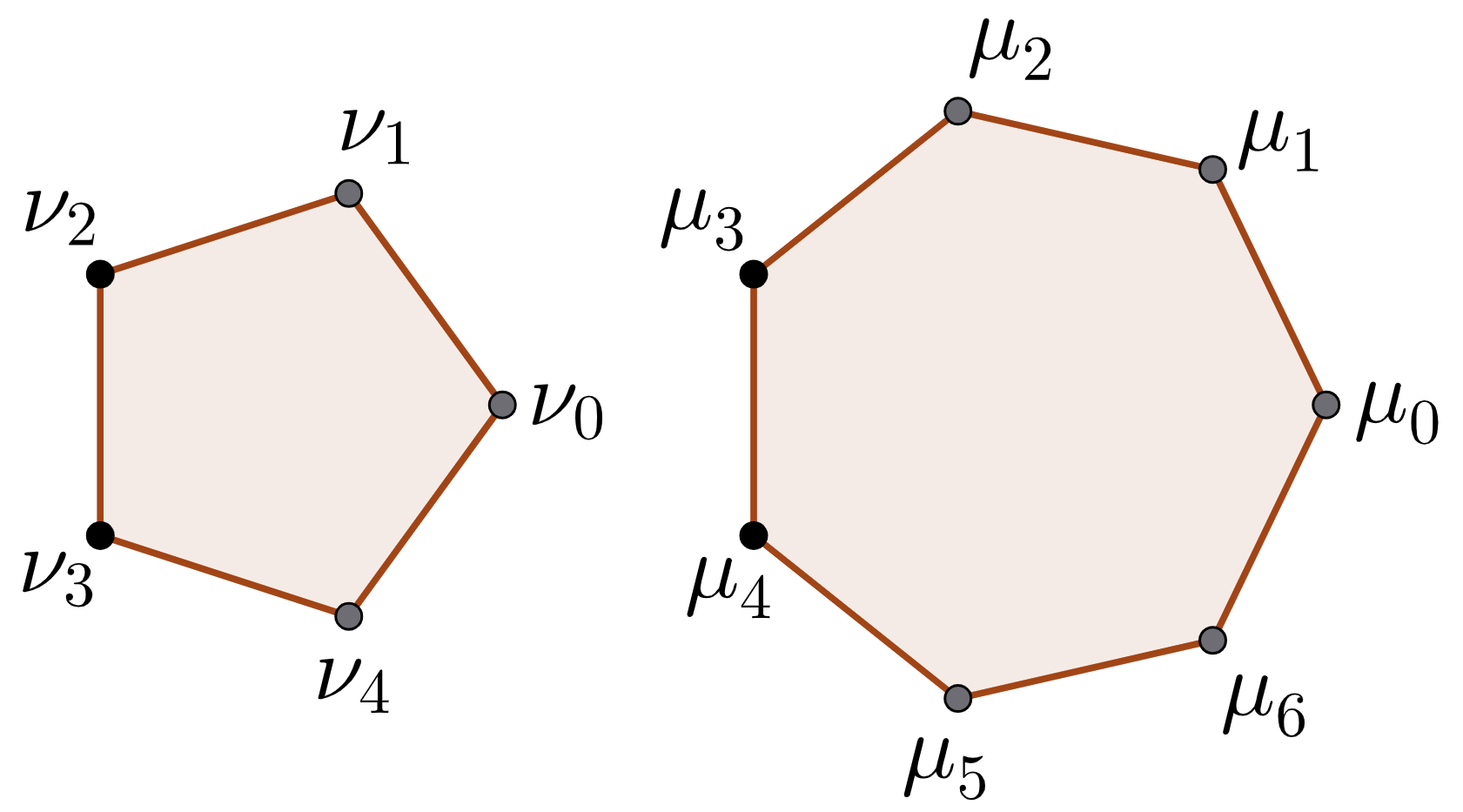}
\caption{Counting Components for $(n_1,n_2)=(5,7)$.}
\label{fig:polygons}
\end{figure}

\begin{rem}
\begin{enumerate}
\item[]
\item When $r=2$ the formula of Theorem \ref{thm:components} gives $\frac{1}{2}(n_{1}-1)(n_{2}-1)$ which matches the number found in \cite{munozsl2,ollersl2link}. Hence, Theorem \ref{thm:components} generalizes
that result to generalized torus link groups. See also Corollary \ref{cor:gl2} for the case $\GL(2,\C)$.

\item The same formula works when $n_{1}$ is even and all other $n_{i}$ are odd. In this case, the number of orbits of the action of $F=(C_2)^r$ on $X_{+}$ and on $X_{-}$ is different, but they add up to the same formula. We believe that an example is more instructive than the general argument. Indeed, if $(n_1,n_2)=(4,5)$ as in Figure \ref{fig:polygons2}, $X_+$ is the set of 20 pairs $(\nu_j,\mu_k)$ with $\nu_j$ 4th root and $\mu_k$ 5th root of 1, and $X_-$ is the set of 20 pairs $(\xi_j,-\mu_k)$ which are roots of $-1$. However, while for $X_+$, every element $(-1,\sigma_2)\in (C_2)^2$ fixes both $\nu_0=1$ and $\nu_2=-1$, the same element of $(C_2)^2$ does not fix any element of $X_-$ since $\pm1$ are not 4th roots of $-1$. Hence, Burnside's lemma gives: $|X_{+}/F|=\frac14(20+4+10+2)=9$ but only $|X_{-}/F|=\frac14(20+4+0+0)=6$. The exceptional cases, to be removed, are computed to be 9, and we end up with 6 irreducible components, matching the expression $\frac12(n_1-1)(n_2-1)$. The same techniques also work in the cases with other even $n_i$'s. All calculations can be easily performed with a computer program, but a closed formula for all cases is rather involved.
\end{enumerate}

\end{rem}

\begin{figure}[hbt] \centering
\includegraphics[height=26mm]{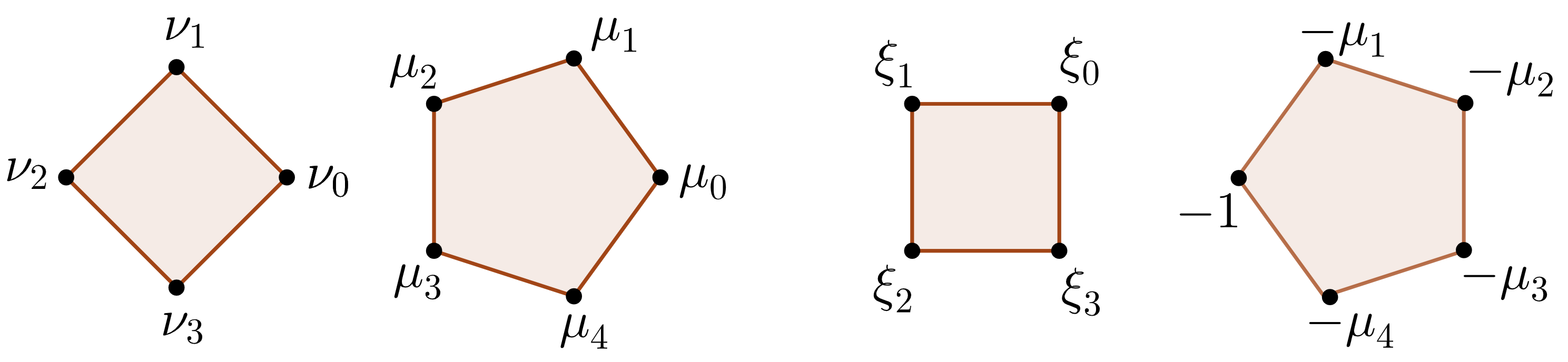}
\caption{Counting Components for $(n_1,n_2)=(4,5)$.}
\label{fig:polygons2}
\end{figure}

\subsection{Some general results for $\GL(m,\C)$}

Consider the canonical projection:
\begin{eqnarray*}
\pi:\hom^{irr}(\Gamma_{\mathbf{n}},\GL(m,\C)) & \to & \mathbb{C}^{*},\\
(A_{1},\ldots,A_{r}) & \mapsto & \omega
\end{eqnarray*}
where $\omega\in\mathbb{C}^{*}$ is such that $A_{i}^{n_{i}}=\omega I_{m}$ for all $i\in[r]$.

Note that, in particular,
\begin{equation}
\label{eq:pi}
\hom^{irr}(*\Z_{\mathbf{n}},\GL(m,\C))=\pi^{-1}(1),
\end{equation}
where  $*\Z_{\mathbf{n}}:=\mathbb{Z}_{n_{1}}*\mathbb{Z}_{n_{2}}*\cdots*\mathbb{Z}_{n_{r}}$
is the free product of cyclic groups of orders $n_{i}$. 

Given $\rho=(A_1,...,A_r)\in\pi^{-1}(\omega)$ and a fixed $i\in[r]$, all eigenvalues of $A_i$ are $n_i$th-roots of $\omega$.

We will now consider a subvariety of $\hom(\Gamma_{\mathbf{n}},\GL(m,\C))$
consisting of those representations such that every $A_{i}\in\GL(m,\C)$
has exactly $m$ distinct eigenvalues (every $A_{i}$ is a regular semisimple element).
\begin{defn}Let $\Gamma$ be a group generated by $\gamma_1,...,\gamma_r$. We define $\mathcal{R}_{\Gamma,m}^{de}\subset\hom(\Gamma,\GL(m,\C))$
as:
\[
\mathcal{R}_{\Gamma,m}^{de}:=\{\rho\in\hom(\Gamma,\GL(m,\C))\mid\rho(\gamma_{i})\text{ has } m \text{ distinct eigenvalues for all } i\in [r]\}.
\]
\end{defn}

We are interested in determining the number of path components of
$\mathcal{R}_{\Gamma_{\mathbf{n}},m}^{de}$.  For this, we can assume,
without loss of generality, that the $n_{i}$ are ordered in non-decreasing
order:
\[
n_{1}\leq n_{2}\leq\cdots\leq n_{r}.
\]

By \eqref{eq:pi} we have
\[
\mathcal{R}_{*\Z_{\mathbf{n}},m}^{de, irred}:=\mathcal{R}_{\Gamma_{\mathbf{n}},m}^{de}\cap\pi^{-1}(1)\subset\hom^{irr}(*\Z_{\mathbf{n}},\GL(m,\C)).
\]
As before, all eigenvalues of the matrix $A_{i}$ in $\rho\in\mathcal{R}_{*\Z_{\mathbf{n}},m}^{de}$
are $n_{i}$-th roots of unity. Hence, if $m>n_{1}$ then $\mathcal{R}_{*\Z_{\mathbf{n}},m}^{de}$
is empty (the same reasoning applies to $\mathcal{R}_{\Gamma_{\mathbf{n}},m}^{de}$
for which the eigenvalues of the $A_{i}$ are $n_{i}$-th roots of
$\omega$).

\begin{lem}
\label{lem:Z_n}Let $m\leq n_{1}$. Then, the number of path connected
components of $\mathcal{R}_{*\Z_{\mathbf{n}},m}^{de, irred}$ is
\[
N_{m}(n):=\prod_{i=1}^{r}\binom{n_i}{m}.
\]
\end{lem}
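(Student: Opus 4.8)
The plan is to identify $\mathcal{R}_{*\Z_{\mathbf{n}},m}^{de, irred}$ explicitly as an open subset of $\hom^{irr}(*\Z_{\mathbf{n}},\GL(m,\C))$ and then count its path components by a conjugacy-orbit argument. Since $*\Z_{\mathbf{n}}$ is a free product, $\hom(*\Z_{\mathbf{n}},\GL(m,\C)) = \prod_{i=1}^r \hom(\Z_{n_i},\GL(m,\C)) = \prod_{i=1}^r \sqrt[n_i]{I_m}$, and imposing the condition $\rho(\gamma_i)$ has $m$ distinct eigenvalues means exactly that each factor $A_i$ lies in the regular semisimple locus of $\sqrt[n_i]{I_m}$, i.e. $A_i$ is conjugate to a diagonal matrix $D(\zeta^{a_{i,1}},\dots,\zeta^{a_{i,m}})$ where $\zeta=e^{2\pi i/n_i}$ and the exponents $a_{i,1},\dots,a_{i,m}$ are \emph{distinct} residues mod $n_i$. (The extra $irred$ superscript is automatic once $m\ge 2$ and there are two or more non-scalar factors, but in any case it is harmless: in the regular semisimple stratum the irreducible locus is open and nonempty in each orbit product.)

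First I would fix, for each $i$, the set $S_i$ of unordered $m$-element subsets of the group $\Z_{n_i}$ of $n_i$-th roots of unity; there are $\binom{n_i}{m}$ of these. For a choice $\mathbf{s}=(s_1,\dots,s_r)\in S_1\times\cdots\times S_r$ let $\xi_i(\mathbf{s})$ be the diagonal matrix with the chosen roots of unity on the diagonal, and let $\mathcal{O}_{\mathbf{s}} := \prod_{i=1}^r (\GL(m,\C)\cdot \xi_i(\mathbf{s}))$, the product of the corresponding conjugacy orbits. Each $\GL(m,\C)\cdot\xi_i(\mathbf{s})$ is the orbit of a (regular) semisimple element, hence is a closed, smooth, connected (indeed irreducible, being a homogeneous space $\GL(m,\C)/T$) subvariety; so $\mathcal{O}_{\mathbf{s}}$ is connected, and in fact path-connected. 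I claim $\mathcal{R}_{*\Z_{\mathbf{n}},m}^{de, irred} = \bigsqcup_{\mathbf{s}} \mathcal{O}_{\mathbf{s}}$: every regular semisimple $n_i$-th root of $I_m$ has its eigenvalue multiset determined by which $n_i$-th roots of unity occur (each with multiplicity one, by the distinct-eigenvalue hypothesis), so it lies in exactly one orbit $\GL(m,\C)\cdot\xi_i(\mathbf{s})$; and distinct $\mathbf{s}$ give disjoint orbit products since the eigenvalue data differ. Because the roots of unity are a discrete set, as $\mathbf{s}$ varies the pieces $\mathcal{O}_{\mathbf{s}}$ cannot be connected to one another within the regular semisimple locus (a path would have to move an eigenvalue continuously off the $n_i$-th roots of unity or collide two of them, leaving the locus). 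Hence the path components are precisely the $\mathcal{O}_{\mathbf{s}}$, and their number is $\prod_{i=1}^r \binom{n_i}{m} = N_m(n)$.

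The main obstacle I anticipate is the disjointness/separation claim — namely, verifying rigorously that no path inside $\mathcal{R}_{*\Z_{\mathbf{n}},m}^{de, irred}$ joins $\mathcal{O}_{\mathbf{s}}$ to $\mathcal{O}_{\mathbf{s}'}$ for $\mathbf{s}\ne\mathbf{s}'$. The cleanest way to do this is to exhibit a locally constant invariant separating the pieces: for each $i$ the (unordered) multiset of eigenvalues of $A_i$ is a continuous function of $\rho$ valued in the space of $m$-element subsets of $\C^*$, and restricted to $\mathcal{R}^{de}$ it takes values in the finite discrete set $\{n_i\text{-subsets of }\Z_{n_i}\}$; a continuous map from a (path-)connected space to a discrete set is constant, so this invariant is constant on each path component, and it takes the value $\mathbf{s}$ exactly on $\mathcal{O}_{\mathbf{s}}$. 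Combined with the observation above that each $\mathcal{O}_{\mathbf{s}}$ is itself path-connected (as a finite product of homogeneous spaces $\GL(m,\C)/Z_{\GL(m,\C)}(\xi_i)$ with $\GL(m,\C)$ connected), this pins down the count. One should also double-check that $\mathcal{O}_{\mathbf{s}}$ actually meets $\hom^{irr}$ — which holds as soon as $m\geq 2$ and at least two of the diagonal matrices $\xi_i(\mathbf{s})$ are non-scalar; when $m\le n_1$ and $r\ge 2$ this is easily arranged, and when it fails (e.g. $m=1$) the formula still reads correctly since $\binom{n_i}{1}=n_i$ counts the $n_i$ scalar orbits.
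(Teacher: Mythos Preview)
Your approach is essentially the same as the paper's: both index the putative components by the data $\mathbf{s}=(s_1,\ldots,s_r)$ of unordered $m$-subsets of $n_i$-th roots of unity, then argue that each piece is path-connected and that distinct pieces are separated by the (discrete) eigenvalue invariant. The paper establishes connectedness of each piece by building an explicit path $\tilde\gamma(t)=(A_1,\ldots,\gamma(t),\ldots,A_r)$ conjugating one $A_i$ by a permutation matrix and then invoking a codimension argument to push $\tilde\gamma$ into the irreducible locus; you instead observe that each $\mathcal{O}_{\mathbf{s}}=\prod_i \GL(m,\C)\cdot\xi_i(\mathbf{s})$ is an irreducible variety. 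Your packaging is a bit cleaner.

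There is, however, a small but genuine gap. You assert $\mathcal{R}_{*\Z_{\mathbf{n}},m}^{de,\,irred}=\bigsqcup_{\mathbf{s}}\mathcal{O}_{\mathbf{s}}$ and then argue that each $\mathcal{O}_{\mathbf{s}}$ is path-connected. But $\mathcal{O}_{\mathbf{s}}$ visibly contains reducible representations (e.g.\ the diagonal tuple $(\xi_1(\mathbf{s}),\ldots,\xi_r(\mathbf{s}))$ itself), so the correct decomposition is $\mathcal{R}_{*\Z_{\mathbf{n}},m}^{de,\,irred}=\bigsqcup_{\mathbf{s}}\bigl(\mathcal{O}_{\mathbf{s}}\cap\hom^{irr}\bigr)$, and it is the path-connectedness of these \emph{intersections} that you need. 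Your argument for $\mathcal{O}_{\mathbf{s}}$ alone does not immediately give this: an open subset of a path-connected space need not be path-connected. The fix is one line, and you already have the ingredient: since $\mathcal{O}_{\mathbf{s}}$ is an \emph{irreducible} complex variety (you noted this) and $\hom^{irr}$ is Zariski open, the intersection $\mathcal{O}_{\mathbf{s}}\cap\hom^{irr}$, once nonempty, is itself irreducible, hence connected and path-connected in the analytic topology. You should state this explicitly rather than arguing only about $\mathcal{O}_{\mathbf{s}}$. The paper's codimension argument is doing exactly the same work in a more hands-on way.
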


\begin{proof}

First we note that there is no path between different $n_{i}$-th roots of unity (preserving the relation), so it suffices to count such choices to determine the number of path-components.

The hypothesis implies that $A_{i}^{n_{i}}=I_{m}$ and all eigenvalues
of $A_{i}$ are pairwise distinct $n_{i}$-th roots of unity. So, for
each $A_{i}$, we have $\binom{n_i}{m}$ possible choices of such
eigenvalues (up to reordering). The ordering of these eigenvalues is irrelevant because
$A_{i}$ can be conjugated by a permutation matrix $P\in\GL(m,\C)$, keeping the other matrices fixed, and there is a path $\gamma:[0,1]\to\GL(m,\C)$
joining $A_{i}$ with $PA_{i}P^{-1}$. Such a path gives rise to a
path
\[
\tilde{\gamma}:[0,1]\to\mathcal{R}_{*\Z_{\mathbf{n}},m}^{de},\quad\quad\tilde{\gamma}(t)=(A_{1},\ldots,A_{i-1},\gamma(t),A_{i+1},\ldots,A_r).
\]
Using a codimension argument, one can show that $\tilde{\gamma}(t)$
can always be deformed so that it lies inside the open locus of irreducible
representations in $\hom(*\Z_{\mathbf{n}},\GL(m,\C))$.

We note a technical fact:  the (deformed) path above may result (at time $t=1$) in a representation that is {\it reducible}.  This does not change the count of components however since the closure, taken in $\hom^{de}(*\Z_{\mathbf{n}},\GL(m,\C))$, will have the same number of components as $\mathcal{R}_{*\Z_{\mathbf{n}},m}^{de, irred}$ since (as mentioned before) there can be no path between tuples with different $n_{i}$-th roots of unity.
\end{proof}

Now, to determine the number of path connected components of $\mathcal{R}_{\Gamma_{\mathbf{n}},m}^{de}$,
we first define a path in the subgroup
\[
\GL(m,\C)^{1}:=\{A\in\GL(m,\C)\,:\,|\det A|=1\}
\]
as follows. Fix $A\in\GL(m,\C)^{1}$ and $n$ and let:
\begin{eqnarray*}
\gamma_{A,n}:\mathbb{R} & \to & \GL(m,\C)^{1}\\
t & \mapsto & A\,e^{2\pi it\frac{1}{n}},
\end{eqnarray*}
whose image is a compact curve. If, in particular, $A^{n}=I_{m}$,
then $(\gamma_{A}(k))^{n}=I_{m}$, for all $k\in\mathbb{Z}$ (moreover,
$\gamma_{A,n}(t)$ has the same eigenspaces as $A$ since a matrix shares
the same eigenspaces with a non-zero scalar multiple of it).

It is then clear that $\rho=(A_{1},\ldots,A_{i},\ldots,A_{r})$ and
$$
\rho_{i}(t):=(\gamma_{A_{1},n_1}(t),\ldots,\gamma_{A_{i},n_i}(t),\ldots,\gamma_{A_{r},n_r}(t))
$$ are
in the same path component of $\mathcal{R}_{\Gamma_{\mathbf{n}},m}^{de}$
(although generally in distinct path components of $\mathcal{R}_{*\Z_{\mathbf{n}},m}^{de}$).
So, we define, for $k\in\mathbb{Z}$,
\[
\varphi(k,\rho)=(\gamma_{A_{1},n_1}(k),\ldots,\gamma_{A_{r},n_r}(k))
\]
and this is clearly an action of $\mathbb{Z}$ on $\mathcal{R}_{*\Z_{\mathbf{n}},m}^{de}$
(this actually factors through an action of $\mathbb{Z}_{N}$, where $N$ is the least common multiple of all the $n_i$).
Thus, $\mathbb{Z}$ also acts on the set $\pi_{0}(\mathcal{R}_{*\Z_{\mathbf{n}},m}^{de})$
of path components of $\mathcal{R}_{*\Z_{\mathbf{n}},m}^{de}$. So,
we have shown:
\begin{prop}
The number of path components of $\mathcal{R}_{\Gamma_{\mathbf{n}},m}^{de}$
is
\[
\left|\pi_{0}(\mathcal{R}_{*\Z_{\mathbf{n}},m}^{de})/\mathbb{Z}\right|,
\]
that is, the $($finite$)$ number of $\mathbb{Z}$-orbits on $\pi_{0}(\mathcal{R}_{*\Z_{\mathbf{n}},m}^{de})$.
\end{prop}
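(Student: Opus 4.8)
The plan is to show that the natural inclusion $\iota\colon\mathcal{R}_{*\Z_{\mathbf{n}},m}^{de}\hookrightarrow\mathcal{R}_{\Gamma_{\mathbf{n}},m}^{de}$ (induced by the surjection $\Gamma_{\mathbf{n}}\twoheadrightarrow *\Z_{\mathbf{n}}$, equivalently by the inclusion $\hom(*\Z_{\mathbf{n}},\GL(m,\C))\hookrightarrow\hom(\Gamma_{\mathbf{n}},\GL(m,\C))$) is surjective on path components, and that two components of $\mathcal{R}_{*\Z_{\mathbf{n}},m}^{de}$ have the same image under $\iota$ if and only if they lie in a single $\Z$-orbit of the action $\varphi$; together these give the claimed bijection between $\pi_{0}(\mathcal{R}_{*\Z_{\mathbf{n}},m}^{de})/\Z$ and $\pi_{0}(\mathcal{R}_{\Gamma_{\mathbf{n}},m}^{de})$. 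Morally this is the statement that $\pi\colon\mathcal{R}_{\Gamma_{\mathbf{n}},m}^{de}\to\C^{*}$ behaves like a fiber bundle over the path-connected base $\C^{*}$ with fiber $\pi^{-1}(1)=\mathcal{R}_{*\Z_{\mathbf{n}},m}^{de}$, so that $\pi_{0}$ of the total space is $\pi_{0}$ of the fiber modulo the monodromy action of $\pi_{1}(\C^{*})\cong\Z$; rather than proving local triviality of $\pi$, I will verify by hand only the three facts that are actually used, the scalings $A_{i}\mapsto\zeta A_{i}$ furnishing the needed explicit path liftings.

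First, $\iota$ is surjective on $\pi_{0}$. Given $\rho=(A_{1},\ldots,A_{r})\in\mathcal{R}_{\Gamma_{\mathbf{n}},m}^{de}$ with $\pi(\rho)=\omega$, so $A_{i}^{n_{i}}=\omega I_{m}$ for all $i$, fix the branch of $\omega^{z}$ with $\omega^{0}=1$ and set $\rho(s):=(\omega^{(s-1)/n_{1}}A_{1},\ldots,\omega^{(s-1)/n_{r}}A_{r})$ for $s\in[0,1]$. Each coordinate is a nonzero scalar multiple of $A_{i}$, hence still has $m$ distinct eigenvalues, and the $n_{i}$-th power of the $i$-th coordinate is $\omega^{s}I_{m}$ independently of $i$; so $s\mapsto\rho(s)$ is a path in $\mathcal{R}_{\Gamma_{\mathbf{n}},m}^{de}$ from $\rho(1)=\rho$ to $\rho(0)\in\pi^{-1}(1)=\mathcal{R}_{*\Z_{\mathbf{n}},m}^{de}$. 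On the reducible locus, where $A_{i}^{n_{i}}$ need not be a single scalar, $\rho$ splits as a direct sum of blocks, each of which is rescaled blockwise to have its $n_{i}$-th powers all equal to $I_{m}$; as in the proof of Lemma~\ref{lem:Z_n} this produces no additional path components. Second, $\iota$ is constant on $\Z$-orbits, which is exactly the content of the paragraph preceding the Proposition: for $\rho\in\mathcal{R}_{*\Z_{\mathbf{n}},m}^{de}$ and $k\in\Z$ the path $t\mapsto(e^{2\pi i t/n_{1}}A_{1},\ldots,e^{2\pi i t/n_{r}}A_{r})$, $t\in[0,k]$, lies in $\mathcal{R}_{\Gamma_{\mathbf{n}},m}^{de}$ (the $i$-th coordinate has $n_{i}$-th power $e^{2\pi i t}I_{m}$) and joins $\iota(\rho)$ to $\iota(\varphi(k,\rho))$.

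The remaining, and in my view hardest, step is the converse: if $\iota(\rho)$ and $\iota(\rho')$ lie in one path component of $\mathcal{R}_{\Gamma_{\mathbf{n}},m}^{de}$, then $[\rho]$ and $[\rho']$ lie in one $\Z$-orbit in $\pi_{0}(\mathcal{R}_{*\Z_{\mathbf{n}},m}^{de})$. Let $P\colon[0,1]\to\mathcal{R}_{\Gamma_{\mathbf{n}},m}^{de}$ join $\iota(\rho)$ to $\iota(\rho')$. After perturbing $P$ relative to its endpoints so as to avoid the positive-codimension locus on which $A_{i}^{n_{i}}$ fails to be scalar (possible as in the proof of Lemma~\ref{lem:Z_n}), the composite $\omega(t):=\pi(P(t))$ is a well-defined continuous loop in $\C^{*}$ based at $1$, with winding number $w\in\Z\cong\pi_{1}(\C^{*},1)$. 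Choose the continuous branch of $\omega(t)^{1/n_{i}}$ equal to $1$ at $t=0$ and set $\widetilde{P}(t):=(\omega(t)^{-1/n_{1}}P(t)_{1},\ldots,\omega(t)^{-1/n_{r}}P(t)_{r})$. Then $\widetilde{P}$ is a path in $\mathcal{R}_{*\Z_{\mathbf{n}},m}^{de}$ (each coordinate keeps $m$ distinct eigenvalues, being a nonzero scalar multiple of $P(t)_{i}$, and the $i$-th coordinate's $n_{i}$-th power is $\omega(t)^{-1}\omega(t)I_{m}=I_{m}$) with $\widetilde{P}(0)=\rho$; and since tracing the loop $\omega$ of winding number $w$ carries the chosen branch of $\omega^{1/n_{i}}$ from $1$ to $e^{2\pi i w/n_{i}}$, one reads off $\widetilde{P}(1)=\varphi(-w,\rho')$. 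Hence $[\rho]=[\varphi(-w,\rho')]$ in $\pi_{0}(\mathcal{R}_{*\Z_{\mathbf{n}},m}^{de})$, so $[\rho]$ and $[\rho']$ lie in one $\Z$-orbit, as needed. Finiteness of the orbit set is then immediate from Lemma~\ref{lem:Z_n} when $m\le n_{1}$ (and is vacuous otherwise, both spaces being empty). The main obstacle is precisely the perturbation and continuity bookkeeping in this last step — ensuring $\pi\circ P$ is genuinely single-valued and continuous and that the $n_{i}$-th-root branches vary continuously in $t$ — which is routine but relies on the codimension estimate, exactly the type of argument already invoked for Lemma~\ref{lem:Z_n}.
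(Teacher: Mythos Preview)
Your argument is correct and shares the paper's core idea: rescale the common value $A_{i}^{n_{i}}=\omega I_{m}$ down to $\omega=1$ and recognize the $\Z$-action as the resulting ambiguity. The paper's own proof is essentially a one-liner citing the SDR of Lemma~\ref{lem:def-S1} to contract $|\omega|$ to $1$; combined with the $\R$-flow $t\mapsto(A_{i}e^{2\pi it/n_{i}})_{i}$ introduced just before the Proposition, this tacitly exhibits the $|\omega|=1$ locus as a bundle over $S^{1}$ with fiber $\pi^{-1}(1)$ and monodromy $\varphi(1,\cdot)$, but the converse direction---that two fiber elements connected in the total space must differ by the $\Z$-action---is never spelled out. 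Your step~3 winding-number computation is precisely that monodromy argument made explicit, so your write-up is in fact more complete than the paper's. Both treatments wave hands at the same spot, namely the reducible locus where $A_{i}^{n_{i}}$ fails to be a single scalar, by invoking a codimension/perturbation argument of the type already used for Lemma~\ref{lem:Z_n}.
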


\begin{proof}
Here we use the fact that there is a SDR between $\mathcal{R}_{\Gamma_{\mathbf{n}},m}^{de}$
and the subspace where $\rho$ has all matrices $A_{i}$ with determinant
in the circle $\U(1)$ by Lemma \ref{lem:def-S1}.
\end{proof}

\begin{cor}
\label{cor:gl2}
Let $m=r=2$, and $\gcd(n_{1},n_{2})=1$. Then
\[
|\pi_{0}(\hom^{irr}(\Gamma_{\mathbf{n}},\GL(m,\C)))|=\left\lfloor \frac{n_{1}}{2}\right\rfloor \left\lfloor \frac{n_{2}}{2}\right\rfloor
\]
\end{cor}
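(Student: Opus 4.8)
The plan is to deduce the Corollary from the Proposition immediately preceding it, together with Lemma \ref{lem:Z_n}, after first matching $\hom^{irr}(\Gamma_\n,\GL(2,\C))$ with the regular-semisimple locus $\mathcal{R}_{\Gamma_\n,2}^{de}$ at the level of path components. For the matching step: if $\rho=(A_1,A_2)\in\hom^{irr}(\Gamma_\n,\GL(2,\C))$, then Corollary \ref{important-cor} gives that each $A_i$ is diagonalizable with $A_i^{n_i}=\omega I_2$ for a common $\omega\in\C^*$; were some $A_i$ central (a single eigenvalue), then $\rho(\Gamma_\n)$ would be abelian, contradicting irreducibility, so each $A_i$ has two distinct eigenvalues and $\rho\in\mathcal{R}_{\Gamma_\n,2}^{de}$. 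Conversely, by an argument as in the proof of Lemma \ref{lem:algmap}, each path component of $\mathcal{R}_{\Gamma_\n,2}^{de}$ is an irreducible variety --- it is a $\GL(2,\C)^2$-orbit of a diagonal tuple, glued along the $\C^*$-worth of admissible values of $\omega=A_i^{n_i}$ --- inside which the reducible representations form a proper Zariski-closed subset; hence $\hom^{irr}(\Gamma_\n,\GL(2,\C))$ is dense in $\mathcal{R}_{\Gamma_\n,2}^{de}$ and meets each path component in a path-connected set, so the inclusion induces a bijection $\pi_0(\hom^{irr}(\Gamma_\n,\GL(2,\C)))\cong\pi_0(\mathcal{R}_{\Gamma_\n,2}^{de})$. (If some $n_i=1$ both sides are empty and the stated formula is $0$, so assume $n_1,n_2\geq 2$.)

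Next I would run the combinatorics. By the preceding Proposition, $|\pi_0(\mathcal{R}_{\Gamma_\n,2}^{de})|=|\pi_0(\mathcal{R}_{*\Z_\n,2}^{de})/\Z|$, and by Lemma \ref{lem:Z_n} the set $\pi_0(\mathcal{R}_{*\Z_\n,2}^{de})$ is in bijection with the pairs $(S_1,S_2)$, where $S_i$ is a $2$-element subset of the group $\mu_{n_i}$ of $n_i$-th roots of unity (so $N_2(n)=\binom{n_1}{2}\binom{n_2}{2}$ in all). Unwinding the $\Z$-action defined in the proof of the Proposition, its generator sends $(S_1,S_2)$ to $(e^{2\pi i/n_1}S_1,\,e^{2\pi i/n_2}S_2)$; that is, it rotates $S_1$ by one step in $\mu_{n_1}$ and $S_2$ by one step in $\mu_{n_2}$ simultaneously. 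The action factors through $\Z_{N}$, $N=\mathrm{lcm}(n_1,n_2)$, and since $\gcd(n_1,n_2)=1$ we have $\Z_N\cong\Z_{n_1}\times\Z_{n_2}$ with the generator mapping to $(1,1)$, which generates the whole product. Hence the $\Z$-orbits coincide with the orbits of $\Z_{n_1}\times\Z_{n_2}$ acting coordinate-wise, and the count factorizes as $|\pi_0(\mathcal{R}_{*\Z_\n,2}^{de})/\Z|=c(n_1)\,c(n_2)$, where $c(n)$ is the number of $\Z_n$-orbits of $2$-element subsets of $\Z_n$ under translation. A one-line count --- a $2$-subset $\{a,a+d\}$ is determined up to translation by $\{d,-d\}$ with $d\in\{1,\dots,n-1\}$, the involution $d\mapsto -d$ having a fixed point exactly when $n$ is even --- gives $c(n)=\lfloor n/2\rfloor$, so $|\pi_0(\hom^{irr}(\Gamma_\n,\GL(2,\C)))|=\lfloor n_1/2\rfloor\lfloor n_2/2\rfloor$, as claimed.

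The main obstacle is the first paragraph: certifying that passing from $\mathcal{R}_{\Gamma_\n,2}^{de}$ (which the Proposition handles) to the irreducible locus (which the Corollary concerns) does not change the number of components. Concretely, one must verify that every path component of $\mathcal{R}_{\Gamma_\n,2}^{de}$ contains an irreducible representation and stays path-connected after deleting its reducible sublocus; the remark in the proof of Lemma \ref{lem:Z_n} explicitly warns that the deformations realizing path-connectedness of $\mathcal{R}^{de}$ may momentarily leave the irreducible locus, so this genuinely requires the density/irreducibility argument and is not automatic. Everything else --- identifying the discrete invariant of a component with a pair of $2$-subsets of roots of unity, reading off the $\Z$-action, and the elementary orbit count via the Chinese Remainder Theorem --- is routine bookkeeping.
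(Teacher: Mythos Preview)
Your proposal is correct and follows essentially the same route as the paper: identify $\hom^{irr}(\Gamma_\n,\GL(2,\C))$ with (the irreducible part of) $\mathcal{R}_{\Gamma_\n,2}^{de}$ via the observation that an irreducible $2\times 2$ pair cannot have a scalar factor, then invoke the preceding Proposition to reduce to counting $\Z$-orbits on $\pi_0(\mathcal{R}_{*\Z_\n,2}^{de})$. The paper simply asserts that the last count is done ``directly'' and matches \cite[Proposition~7.3]{munozsl3}, whereas you actually carry it out---factoring the $\Z_N$-action through $\Z_{n_1}\times\Z_{n_2}$ via the Chinese Remainder Theorem and computing $c(n)=\lfloor n/2\rfloor$ for the translation orbits of $2$-subsets of $\Z_n$---and you are also more explicit than the paper about why passing between $\mathcal{R}^{de}$ and its irreducible locus does not change $\pi_0$ (the paper buries this in the technical remark at the end of Lemma~\ref{lem:Z_n}).
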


\begin{proof}
Since $m=r=2$ we have $\mathcal{R}_{\Gamma_{\mathbf{n}},m}^{de, irred}=\hom^{irr}(\Gamma_{\mathbf{n}},\GL(m,\C))$.
In fact, if one of the two matrices $A_{1},A_{2}$ is a central element, then
$(A_{1},A_{2})$ is not an irreducible pair. Hence, the result follows from
computing directly the number of distinct configurations of unordered
choices of two distinct $n_{1}$-th and two distinct $n_{2}$-th roots of
1, up to the action of $\mathbb{Z}$ described above. We note that the number of components matches the count performed in \cite[Proposition 7.3]{munozsl3}.
\end{proof}

\begin{rem}
Since $\X(\Gamma_\n,G)$ and $\hom(\Gamma_\n,G)$ have the same number of components $($since $G$ is connected$)$, and $$\hom(\Gamma_\n,\GL(m,\C))/(\C^*)^r\cong \hom(\Gamma_\n,\mathrm{PGL}(m,\C)),$$ we have that $\pi_0(\X(\Gamma_\n,\GL(m,\C)))=\pi_0(\X(\Gamma_\n,\mathrm{PGL}(m,\C)))$.  Thus the previous corollary applies to $G=\mathrm{PGL}(m,\C)$ as well since the action of $(\C^*)^r$ preserves the irreducible locus in $\hom(\Gamma_\n,\GL(m,\C))$.
\end{rem}

\appendix
\section{Roots in a Lie Group}\label{appendix:roots}

In this appendix we consider the characterization of spaces of $n$-th roots inside a general Lie group $G$. For a given $x\in G$ let $\sqrt[n]{x}^{G}$
denote the set of $n$-th roots of $x\in G$ by
\[
\sqrt[n]{x}^{G}:=\{y\in G\,|\,y^{n}=x\}.
\]
In general, $\sqrt[n]{x}^{G}$ may be infinite, but it can always
be decomposed into disjoint classes under conjugation. 

Let us also denote by $C^{n}(x)^{G}$ the set of conjugacy classes
of elements in $\sqrt[n]{x}^{G}$, and by $|C^{n}(x)^{G}|$ its cardinality
(when finite). In McCrudden \cite{Mc} (Corollary to Proposition 2) the
following result is proved. Let $ZG$ and $DG$ denote, respectively,
the center and the derived group of $G$.
\begin{prop}
\label{prop:McC-bound} Let $e\in G$ be the identity. Then, for all
$x\in G$, we have: 
\[
|C^{n}(x)^{G}|\leq|\sqrt[n]{e}^{Z(Z_{x})}|\cdot|C^{n}(e)^{D(Z_{x})}|,\text{ for all } x\in G,
\]
where $Z_{x}\subset G$ denotes the centralizer of $x$ in $G$, provided
both factors are finite.
\end{prop}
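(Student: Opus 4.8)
The plan is to reduce the bound to the case where $x$ is \emph{central}, by replacing $G$ with the centralizer $H:=Z_x$, and then to invoke the structural analysis of $n$-th roots of a central element.

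\textbf{Reduction to the central case.} First I would observe that every $n$-th root of $x$ commutes with $x$: if $y^{n}=x$ then $yx=y^{n+1}=xy$. Hence $\sqrt[n]{x}^{G}=\sqrt[n]{x}^{H}$ with $H=Z_x$. Next, if $y_{1},y_{2}\in\sqrt[n]{x}^{G}$ are $G$-conjugate, say $y_{2}=gy_{1}g^{-1}$, then $x=y_{2}^{n}=gxg^{-1}$, so $g\in H$; thus two $n$-th roots of $x$ are $G$-conjugate precisely when they are $H$-conjugate, and therefore $|C^{n}(x)^{G}|=|C^{n}(x)^{H}|$. Since $x$ commutes with every element of $H=Z_x$, we have $x\in ZH$. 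So it remains to prove: for any Lie group $H$ and any central element $x\in ZH$,
\[
|C^{n}(x)^{H}|\ \le\ \bigl|\sqrt[n]{e}^{ZH}\bigr|\cdot\bigl|C^{n}(e)^{DH}\bigr| .
\]
This last inequality is precisely McCrudden's Proposition~2, so formally one would just cite it here; the remaining paragraph indicates the idea behind it.

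\textbf{Sketch of the central case.} To explain why Proposition~2 holds, I would push $n$-th roots forward along the abelianization $p\colon H\to H^{\mathrm{ab}}:=H/\overline{DH}$. Conjugation acts trivially on the abelian group $H^{\mathrm{ab}}$, so $y\mapsto p(y)$ is constant on conjugacy classes and maps $C^{n}(x)^{H}$ into the set of $n$-th roots of $p(x)$ in $H^{\mathrm{ab}}$; if that set is empty there is nothing to prove, and otherwise it is a coset of the $n$-torsion subgroup $H^{\mathrm{ab}}[n]$. The structure theory of $H$ lets one show that only $\le\bigl|\sqrt[n]{e}^{ZH}\bigr|$ distinct values $p(y)$ are actually realised — morally, the ``central type'' of a root is recorded by its image in $ZH$ modulo those central elements that already occur as $n$-th roots of $e$ inside $DH$. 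Then, fixing a realised value $a=p(y_{0})$, any other root $y$ over $a$ has the form $y=d\,y_{0}$ with $d\in DH$, and the condition $y^{n}=x=y_{0}^{n}$ becomes, via $(dy_{0})^{n}=\bigl(\prod_{j=0}^{n-1}y_{0}^{j}d\,y_{0}^{-j}\bigr)y_{0}^{n}$, a twisted-norm equation for $d$ inside the normal subgroup $DH$; after normalising $y_{0}$ one matches the $H$-conjugacy classes of these roots with conjugacy classes of $n$-th roots of $e$ in $DH$, which contributes the factor $\bigl|C^{n}(e)^{DH}\bigr|$. Multiplying the two counts gives the inequality.

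\textbf{Main obstacle.} The substantive step is the last one: isolating, inside an $n$-th root of a central element, a ``central factor'' and a ``factor supported in $DH$'', and correctly accounting for the overlap $ZH\cap DH$ so as not to overcount. Everything else — the centralizer reduction and the passage to $H^{\mathrm{ab}}$ — is elementary conjugacy-class bookkeeping. Since the structural input is exactly McCrudden's Proposition~2, in the paper one should simply quote it at that point rather than reprove it.
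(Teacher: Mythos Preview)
Your reduction to the central case is correct and is exactly the elementary observation that turns McCrudden's Proposition~2 into its corollary: any $n$-th root of $x$ commutes with $x$, so $\sqrt[n]{x}^{G}\subset H:=Z_x$, and any $g$ conjugating one root to another must centralize $x$, so $G$-conjugacy and $H$-conjugacy of such roots coincide. Since $x\in ZH$, the problem becomes the central case in $H$, which is precisely the content of Proposition~2 in McCrudden's paper.

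That said, you should be aware that the paper does not supply a proof of this proposition at all: it is stated with the attribution ``In McCrudden [Mc] (Corollary to Proposition~2) the following result is proved'' and then simply quoted. So there is nothing in the paper to compare your argument against beyond the citation itself. Your write-up therefore goes further than the paper does, and the only part that is genuinely needed to match the paper is the sentence ``this is McCrudden's Corollary to Proposition~2''; everything else---the centralizer reduction and especially the sketch via the abelianization $H\to H^{\mathrm{ab}}$ and the twisted-norm equation for the $DH$-factor---is your own elaboration of McCrudden's argument, not something the paper asks you to reproduce.

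One small caution on your sketch of the central case: the heuristic of factoring an $n$-th root into a ``central part'' and a ``$DH$-part'' and tracking the overlap $ZH\cap DH$ is the right intuition, but the actual bookkeeping in McCrudden's Proposition~2 is a bit more delicate than your outline suggests (the twisted product $(dy_0)^n$ does not literally reduce to a norm unless $y_0$ is central, and one needs the specific structure of $H$ as a Lie group). Since you already plan to cite Proposition~2 at that point rather than reprove it, this does not affect the correctness of your proposal.
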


From this, we conclude:
\begin{thm}\label{thm:conj-class}
Fix $n\in\mathbb{N}$. Let $G$ be a connected
reductive group, with connected center $ZG$, and let $x\in ZG$.
Then:
\begin{enumerate}
\item $C^{n}(x)^{G}$ is finite. 
\item Every such conjugacy class is isomorphic to a homogeneous space of
the form $G/Z_{y}$, for a given $y\in\sqrt[n]{x}^{G}$, where $Z_{y}$
is the centralizer of $y$ in $G$.
\end{enumerate}
\end{thm}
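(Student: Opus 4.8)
The plan is to deduce both statements from Proposition \ref{prop:McC-bound} together with the structure theory of connected reductive groups over $\C$. First I would observe that since $x\in ZG$, the centralizer $Z_x$ is all of $G$, so McCrudden's bound reads
\[
|C^{n}(x)^{G}|\leq |\sqrt[n]{e}^{ZG}|\cdot |C^{n}(e)^{DG}|,
\]
provided both factors are finite. The first factor is finite because the hypothesis that $ZG$ is connected forces it to be a torus, say $ZG\cong(\C^{*})^{k}$, and then $\sqrt[n]{e}^{ZG}$ is exactly the $n$-torsion subgroup $ZG[n]\cong(\Z/n)^{k}$. For the second factor, any $y\in DG$ with $y^{n}=e$ has finite order dividing $n$, hence is semisimple (we are in characteristic zero), hence lies in some maximal torus of $DG$; since all maximal tori of $DG$ are conjugate and the $n$-torsion of a fixed maximal torus $T'$ is finite, every such $y$ is conjugate to one of the finitely many elements of $T'[n]$, so $C^{n}(e)^{DG}$ is finite. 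This establishes part (1).

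For part (2), fix $y\in\sqrt[n]{x}^{G}$ and first note that $y$ is semisimple: writing the Jordan decomposition $y=y_{s}y_{u}$, the element $y^{n}=y_{s}^{n}y_{u}^{n}$ is the Jordan decomposition of $y^{n}=x$, which is central, hence semisimple, so uniqueness of Jordan decomposition forces $y_{u}^{n}=e$, and in characteristic zero the only unipotent element of finite order is the identity, whence $y=y_{s}$. Now the conjugacy class of $y$ is precisely the orbit $G\cdot y$ of the conjugation action, with stabilizer the centralizer $Z_{y}$; by the orbit–stabilizer theorem the orbit map $g\mapsto gyg^{-1}$ induces a bijective morphism $G/Z_{y}\to G\cdot y$ which, the base field having characteristic zero, is separable and hence an isomorphism of varieties. (Semisimplicity of $y$ moreover guarantees that $Z_{y}$ is reductive and the orbit is closed, realizing each conjugacy class as an affine homogeneous space $G/Z_{y}$.)

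I expect the only point requiring genuine care to be the finiteness of $C^{n}(e)^{DG}$, i.e. the verification that the hypothesis of Proposition \ref{prop:McC-bound} is met; but this is just the classical fact that a connected reductive group has only finitely many conjugacy classes of elements of a given finite order, and the argument above via conjugacy of maximal tori makes it routine. Everything else — the reduction $Z_{x}=G$, the identification of the connected center with a torus, the semisimplicity of $n$-th roots of a central element, and orbit–stabilizer — is standard. As a remark, one can bypass McCrudden entirely: once each $y\in\sqrt[n]{x}^{G}$ is known to be semisimple it lies in a maximal torus $T$ of $G$ (and $x\in ZG\subset T$), the set $\{y\in T:y^{n}=x\}$ is a coset of the finite group $T[n]$, and since all maximal tori are conjugate this bounds $|C^{n}(x)^{G}|$ by $n^{\mathrm{rank}\,G}$, proving (1) and making (2) transparent at the same time.
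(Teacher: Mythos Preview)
Your proof is correct and follows essentially the same approach as the paper: reduce via $Z_x=G$ to McCrudden's bound, verify both factors are finite, then apply orbit--stabilizer for part (2). The only notable difference is that the paper cites another result of McCrudden (that $C^n(e)^H$ is finite for any connected $H$, applied to $H=DG$) for the second factor, whereas you give a direct argument via conjugacy of maximal tori; your additional remarks on semisimplicity of $y$ and the alternative torus-coset bound bypassing McCrudden are correct extras not present in the paper.
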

\begin{proof}
(1) In McCrudden \cite{Mc} (Corollary 2 of Proposition 4), it is shown that if a group $G$ is connected,
then $C^{n}(e)^{G}$ is finite. Our hypothesis on $G$ implies that $DG$ is connected. Indeed,
$DG$ is the continuous image of the commutator map from $G\times G$
which is connected. Then, since $ZG$ is also connected

\[
\sqrt[n]{e}^{ZG}=C^{n}(e)^{ZG},\quad\mbox{and }C^{n}(e)^{DG}
\]
are both finite (a set of conjugacy classes in an abelian group is
itself). Now, applying Proposition \ref{prop:McC-bound} above, for
$x\in ZG$, we have $Z_{x}=G$ and we obtain: 
\[
|C^{n}(x)^{G}|\leq|\sqrt[n]{e}^{ZG}|\cdot|C^{n}(e)^{DG}|,
\]
which proves (1). 

(2) It is clear that, given $y\in\sqrt[n]{x}^{G}$, for $x\in ZG$,
we have:
\[
(gyg^{-1})^{n}=gy^{n}g^{-1}=gxg^{-1}=x,
\]
so that $\{gyg^{-1}:g\in G\}$ is a conjugacy class in $C^{n}(x)^{G}$.
But every conjugacy class is of this form. Finally, since there is
an transitive action of $G$ on this conjugacy class and the subgroup
$Z_{y}\subset G$ is the stabilizer of a point, the orbit-stabilizer
theorem implies that this conjugacy is isomorphic to $G/Z_{y}$.\end{proof}

Given a set $S\subset G$, denote the collection of conjugation orbits of elements of $S$ by $GSG^{-1}:=\{gsg^{-1}\ |\ s\in S\}$.

\begin{lem}
\label{lem:T-n}For $G=\GL(m,\C)$, any matrix in $\sqrt[n]{I}^{G}$
belongs to $GT_{n}G^{-1}$ where $T_{n}$ is the group of diagonal
matrices with $n$-th roots of unity in the diagonal.\end{lem}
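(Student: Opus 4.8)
The plan is to show that every $A\in\sqrt[n]{I}^{\GL(m,\C)}$ is diagonalizable with $n$-th roots of unity as eigenvalues; this places $A$ in the conjugation orbit of a diagonal matrix with $n$-th roots of unity on the diagonal, i.e.\ in $GT_nG^{-1}$.

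First I would observe that $A^n=I$ means $A$ is annihilated by the polynomial $p(x)=x^n-1\in\C[x]$. Since $\C$ is algebraically closed of characteristic zero, $p$ splits as $\prod_{\zeta^n=1}(x-\zeta)$ into $n$ \emph{distinct} linear factors, so $p$ is squarefree. The minimal polynomial $\mu_A$ of $A$ divides $p$, hence $\mu_A$ is squarefree and splits into distinct linear factors over $\C$; this is equivalent to $A$ being diagonalizable. Moreover every eigenvalue $\lambda$ of $A$ is a root of $\mu_A$, hence of $p$, so $\lambda^n=1$.

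Therefore, choosing a basis of eigenvectors of $A$, there is $g\in\GL(m,\C)$ with $g^{-1}Ag=D$ diagonal and each diagonal entry an $n$-th root of unity, i.e.\ $D\in T_n$. Hence $A=gDg^{-1}\in GT_nG^{-1}$, as claimed.

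The argument is elementary and I expect no real obstacle; the one point to state carefully is the implication ``squarefree minimal polynomial $\Rightarrow$ diagonalizable,'' which uses that $\mu_A$ splits over $\C$, and this is precisely where the hypothesis $G=\GL(m,\C)$ (rather than a general reductive $\C$-group) enters.
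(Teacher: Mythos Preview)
Your proof is correct and follows essentially the same approach as the paper: show that $A$ is diagonalizable with eigenvalues that are $n$-th roots of unity, hence conjugate into $T_n$. The paper's proof is terser, simply asserting that $y\in\sqrt[n]{I}^G$ is diagonalizable and then computing $(gyg^{-1})^n=I$ to conclude the diagonal entries are $n$-th roots of unity; you supply the missing justification for diagonalizability via the squarefree minimal polynomial argument, which is the standard and correct way to fill that gap.
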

\begin{proof}
Since $y\in\sqrt[n]{I}^{G}$ is diagonalizable, there is $g\in G$
such that $gyg^{-1}$ is diagonal, say with diagonal entries $\lambda_{1},\ldots,\lambda_{m}$.
Hence
\[
1=gg^{-1}=gy^{n}g^{-1}=(gyg^{-1})^{n},
\]
has diagonal entries $\lambda_{1}^{n}=\cdots=\lambda_{m}^{n}=1$.
So, this diagonal matrix is in $T_{n}$. 
\end{proof}
This lemma serves to label conjugacy classes, and actually gives their precise
number.
\begin{prop}
The set $\sqrt[n]{I}$ has exactly $\binom{m+n-1}{n-1}$ distinct
conjugacy classes.\end{prop}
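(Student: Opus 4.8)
The plan is to reduce the statement to a combinatorial count of multisets. First I would invoke Lemma \ref{lem:T-n}: every $y\in\sqrt[n]{I}$ is conjugate to a diagonal matrix whose diagonal entries lie in $\mu_n$, the group of $n$-th roots of unity in $\C^*$, a set of exactly $n$ elements. Hence every conjugacy class in $\sqrt[n]{I}$ meets $T_n$, and it suffices to decide when two elements of $T_n$ are $\GL(m,\C)$-conjugate.

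Next I would use the standard fact that two diagonalizable matrices in $\GL(m,\C)$ are conjugate if and only if they have the same multiset of eigenvalues counted with multiplicity: conjugation preserves the characteristic polynomial, and conversely any reordering of the diagonal of a diagonal matrix is realized by conjugation by a permutation matrix. Consequently the conjugacy class of an element of $T_n$ is determined precisely by the unordered list (multiset) of its $m$ diagonal entries, each one of the $n$ elements of $\mu_n$. Distinct multisets give disjoint classes, since the conjugacy classes partition $\sqrt[n]{I}$ and the eigenvalue multiset is a conjugation invariant; so the assignment multiset $\mapsto$ conjugacy class is a bijection.

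It then remains to count multisets of size $m$ drawn from an $n$-element set, which by stars and bars is $\binom{m+n-1}{m}=\binom{m+n-1}{n-1}$, the claimed number.

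The argument is essentially routine; the only point meriting a sentence of care is the injectivity of the multiset-to-class map, i.e. that distinct choices of eigenvalues really do produce distinct classes, which is immediate from the invariance of the eigenvalue multiset under conjugation. One could alternatively phrase the computation through Theorem \ref{thm:conj-class}(2): for a diagonal $y$ with eigenvalue multiplicities $k_1,\dots,k_\ell$ the centralizer $Z_y$ is $\GL(k_1,\C)\times\cdots\times\GL(k_\ell,\C)$, so the classes are exactly the homogeneous spaces $\GL(m,\C)/Z_y$ indexed by such labeled multiplicity data, recovering the same count.
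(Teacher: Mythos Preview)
Your proof is correct and follows essentially the same route as the paper: reduce to $T_n$ via Lemma~\ref{lem:T-n}, observe that two diagonal matrices are $\GL(m,\C)$-conjugate exactly when they share the same multiset of eigenvalues, and count multisets of size $m$ from an $n$-element set by stars and bars. Your added remarks on injectivity and the alternative phrasing via Theorem~\ref{thm:conj-class}(2) are fine but not needed.
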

\begin{proof}
For this count, from Lemma \ref{lem:T-n}, we only need to consider
the conjugacy classes of elements in $T_{n}\cong\mathbb{Z}_{n}\times\cdots\times\mathbb{Z}_{n}$
which is a product of cyclic groups of order $n^{m}$. However, some
distinct elements in $T_{n}$ may belong to the same conjugacy class.
This class is actually determined by the number of equal eigenvalues,
with their order being irrelevant. So, this counting problem is equivalent
to the distribution of (indistinguishable) $m$ balls (the number
of eigenvalues, which is the size of the matrix) into $n$ slots (the
elements in $\mathbb{Z}_{n}$, which is isomorphic to the group of
$n$-th roots of unity in $\mathbb{C}$), which reduces to the given
binomial number.
\end{proof}

\def\cdprime{$''$} \def\Dbar{\leavevmode\lower.6ex\hbox to 0pt{\hskip-.23ex
  \accent"16\hss}D}

\end{document}